\numberwithin{equation}{section}
\definecolor{dblue}{rgb}{0,0,0.45}
\definecolor{red}{rgb}{0.7,0,0}
\newtheorem{theorem}{Theorem}[section]
\newtheorem{lemma}[theorem]{Lemma}
\newtheorem*{lemma*}{Lemma}
\newtheorem{corollary}[theorem]{Corollary}
\newtheorem{proposition}[theorem]{Proposition}
\theoremstyle{definition}
\newtheorem{remark}[theorem]{Remark}
\theoremstyle{remark}
\newcommand{\N}{{\mathbb N}}
\newcommand{\R}{{\mathbb R}}
\newcommand{\Z}{{\mathbb Z}}
\newcommand{\cC}{{\mathcal C}}
\newcommand{\cH}{{\mathcal H}}
\newcommand{\cK}{{\mathcal K}}
\newcommand{\cM}{{\mathcal M}}
\newcommand{\cN}{{\mathcal N}}
\newcommand{\cP}{{\mathcal P}}
\newcommand{\cV}{{\mathcal V}}
\newcommand{\cW}{{\mathcal W}}
\newcommand{\cY}{{\mathcal Y}}
\newcommand{\la}{\langle}
\newcommand{\ra}{\rangle}
\newcommand{\nn}{\nonumber}
\newcommand{\ve}{\varepsilon}
\newcommand{\vertiii}[1]{{\left\vert\kern-0.25ex\left\vert\kern-0.25ex\left\vert #1 
    \right\vert\kern-0.25ex\right\vert\kern-0.25ex\right\vert}}
\date{}
\begin{document}

\title{
Fractional Diffusion Bridges
}
\author{   Yuzuru \textsc{Inahama} 
}
\maketitle

\begin{abstract}
Consider ``stochastic differential equations" driven by 
fractional Brownian motion with Hurst parameter $H\in (1/4, 1)$.
Their solutions are sometimes called fractional diffusion processes.
The main purpose of this paper is  
conditioning these processes to reach a given terminal point. 
We call the conditioned processes fractional diffusion bridges.
Our main tool for mathematically rigorous conditioning 
 is quasi-sure analysis, which is a potential theoretic part of 
 Malliavin calculus.
 We also prove a small-noise large deviation principle 
of Freidlin-Wentzell type for scaled fractional diffusion bridges 
under a mild ellipticity assumption on the coefficient vector fields. 
\vskip 0.08in
\noindent{\bf Keywords.}
rough path theory,  Malliavin calculus,
fractional Brownian motion, Watanabe distribution theory, 
quasi-sure analysis, large deviation principle.
\vskip 0.08in
\noindent {\bf Mathematics subject classification.} 
60L20, 60H07, 60G22, 60F10,.
\end{abstract}

\section{Introduction}

Consider a diffusion process on the Euclidean space
$\R^e$, $e\ge 1$, which is realized as (the law of)
a unique solution of the following stochastic differential equation (SDE) of Stratonovich type driven by the standard $d$-dimensional
Brownian motion $w = (w_t)_{t \in [0,T]}$:
\[
dy_t = \sum_{j=1}^d V_j (y_t)\circ dw^{j}_t  + V_0 (y_t) dt,
\qquad 
y_0 =a\in \R^e.
\]
Here, $V_j~(0\le j \le d)$ are sufficiently regular vector fields on $\R^e$
and $T\in (0, \infty)$ is the time horizon.
When we emphasize the initial value $a$, we write $y (t,a)$
instead of $y_t$. 
If the law of $(y (t,a))_{t \in [0,T]}$ is denoted by $\mathbb{Q}_a$,
then $\{\mathbb{Q}_a \mid a \in \R^e\}$ becomes a diffusion process 
associated with the generator $V_0 +(1/2) \sum_{j=1}^d V_j^2$,
which is a very important research object in probability theory.

In many problems in the theory of diffusion processes, 
conditioning the process $(y (t,a))_{t \in [0,T]}$ to reach
a given point $b\in\R^e$ at the terminal time $T$ is quite important and useful.
The conditioned process is called a diffusion bridge 
(or pinned diffusion process) from $a$ to $b$, 
whose law will be denoted by $\mathbb{Q}_{a,b}$.
Rigorous construction of a diffusion bridge is not so easy.
As far as the author knows, there are four methods:
\begin{enumerate} 
\item[{\bf (a)}]~Regular conditional probability given a random variable.
One defines $\mathbb{Q}_{a,b}$ as $\mathbb{Q}_{a} (\,\bullet\, | \Pi_T =b)$,
or equivalently, as $\mu ( y (\,\cdot\,,a) \in \,\bullet\, | y (T,a) =b)$.
Here, $\mu$ is the $d$-dimensional Wiener measure and
$\Pi_T$ stands for the evaluation map at the time $T$.
\item[{\bf (b)}]~
Define finite dimensional distributions of $\mathbb{Q}_{a,b}$ 
by using Chapman-Kolmogorov type formula
when the law of $y (t,a)$ has a continuous density function
$p(t, a, b)$ for all $a\in\R^e$ and $t>0$. (See the right hand side of 
\eqref{def.CK} below.)
\item[{\bf (c)}]~When the law of $y (t,a)$ has a 
density function $p(t, a, b)$ for all $a\in\R^e$ and 
$p$ is sufficiently regular as a function of $(t, a, b)$,  one can obtain 
$\mathbb{Q}_{a,b}$ as the law of the solution of the following SDE 
whose drift is quite singular near time $T$:
\[
d\hat{y}_t = \sum_{j=1}^d V_j (\hat{y}_t)\circ dw^{j}_t  
+ V_0 (\hat{y}_t) dt +
(\mathbf{V}\mathbf{V}^{\top}) (\hat{y}_t) \nabla \log p(T-t, \hat{y}_t ,b) dt,
\quad 
\hat{y}_0 =a.
\]
Here, $\nabla p(t,a ,b)$ stands for the gradient of $a \mapsto p(t,a ,b)$
and $\mathbf{V}\mathbf{V}^{\top}$ is the diffusion matrix
associated with $\{V_j\}_{ 1\le j \le d}$
(see at the beginning of 
Section \ref{sec.LDP_state} for a precise definition).
\item[{\bf (d)}]~Quasi-sure analysis, which is a potential theoretic part 
of Malliavin calculus. This theory enables us to define 
a Borel probability measure on a very thin subset of the Wiener space, which looks like a ``pullback" of the diffusion bridge measure $\mathbb{Q}_{a,b}$. One can obtain $\mathbb{Q}_{a,b}$
as the law of the It\^o map (i.e. the solution map of the SDE)
 under the pullback measure.
\end{enumerate}

The main purpose of our present paper 
is constructing bridge processes for solutions of ``SDEs" driven by 
fractional Brownian motion (FBM) with Hurst parameter $H \in (1/4, 1)$.
These ``SDEs" are understood 
in the rough path sense when $H \in (1/4, 1/2]$
and in the Young sense when $H \in (1/2, 1)$.
The solutions of these equations are sometimes called fractional diffusion processes. 
When $H=1/2$, the solutions
 coincide with those of Stratonovich SDEs.
Our main tool is quasi-sure analysis as in Item {\bf (d)} above.
Recall that Malliavin calculus for those type of 
stochastic systems is now well-developed.
We call these bridge processes fractional diffusion bridges.
We also prove a small-noise large deviation principle of 
Freidlin-Wentzell type for these (scaled) fractional diffusion bridges
when the diffusion coefficients satisfy a suitable ellipticity condition.

Now we quickly explain why Items {\bf (a)}--{\bf (c)} are unavailable 
for conditioning fractional diffusion processes.
Chapman-Kolmogorov formula as in {\bf (b)} is in a sense
the Markov property itself. 
The main reasons why the SDE in {\bf (c)} produces a diffusion bridge 
is Doob's $h$-transform, which is a famous theorem 
in the theory of Markov processes.
Since our processes are not Markov, 
neither {\bf (b)} nor {\bf (c)} are available.
Though Method {\bf (a)} is very general, it is merely defined for
almost all $b$ with respect to $(\Pi_T)_*\mathbb{Q}_{a}^H= y (T, a)_* \mu$. 
Therefore, it is meaningless to discuss 
$\mathbb{Q}_{a}^H (\,\bullet\, | \Pi_T =b)$ or 
$\mu ( y (\,\cdot\,,a) \in \,\bullet\, | y (T,a) =b)$ for a fixed $b$
since the singleton $\{b\}$
is usually of measure zero with respect to $y (T, a)_* \mu$.
(See Remark \ref{rem.RCPD} for details.)
In contrast, a  remarkable advantage 
of using {\bf (d)} is that we can define 
$\mathbb{Q}_{a, b}^H$ for every $a$ and $b$ with 
$p (T, a,b) >0$ when $y (T, a)$ is non-degenerate in the sense of Malliavin.
(In this paragraph, $(y (t, a))_{t \in [0,T]}$ is the fractional diffusion process starting at $a$, whose law is denoted by $\mathbb{Q}_{a}^H$.
The law of $(y (t, a))_{t \in [0,T]}$ conditioned that 
$y (T,a) =b$ is denoted by $\mathbb{Q}_{a, b}^H$.)

\begin{remark} 
Our fractional diffusion processes/bridges do not satisfy
the Markov property (unless $H=1/2$).
Therefore, when we say fractional diffusion processes/bridges,
the term {\it diffusion} is not used as a terminology 
from the theory of Markov processes, 
but belongs to everyday vocabulary of English.
\end{remark}

The organization of this paper is as follows.
In Section \ref{sec.RPMal}, many important facts on
Besov-type rough paths and Malliavin calculus are reviewed. 
Those will be used in later sections. 
In Section \ref{sec.QSlift} we provide a quasi-sure refinement 
of the canonical rough path lift for a certain class of Gaussian processes
including FBM with Hurst parameter $H \in (1/4,1/2]$.
We then prove the lift map is $\infty$-quasi-continuous.
Section \ref{sec.conditionFDP} is a core part of this paper.
In this section, we condition a fractional diffusion process, that is,
a unique solution of a rough differential equation driven by 
the canonical lift of FBM with Hurst parameter $H \in (1/4,1/2]$. 
The conditioned processes (or measures)
 obtained in this way are called fractional diffusion bridges.
In Sections \ref{sec.LDP_state} and \ref{sec.proof.low}
we state and prove a small-noise large deviation principle of 
Freidlin-Wentzell type for scaled fractional diffusion bridges with $H \in (1/4,1/2]$
under a certain
 ``everywhere ellipticity"" condition on the diffusion coefficient.
In Section \ref{sec.Young}, we consider the case $H \in (1/2, 1)$
and show that all the results in Sections \ref{sec.conditionFDP}--\ref{sec.proof.low} still hold true. Note that this case is much simpler 
because rough path theory is not needed to define fractional 
diffusion processes. So, our explanations are a little bit quick.

\medskip
\noindent
{\bf Notation}
\\
\noindent
Before closing Introduction, 
 we introduce the notation which will be used throughout the paper.
We write $\N =\{1,2, \ldots\}$ and $\N_0 :=\N \cup \{0\}$.
We set $\llbracket j, k\rrbracket :=\{ j, j+1,\ldots, k\}$ for $j, k\in\N_0$
with $j\le k$.
Let $T >0$ be arbitrary and we work on the time interval
$[0,T]$ unless otherwise specified.
We set $\triangle_{T} =\{ (s,t)\in \R^2 \mid  0\le s\le t \le T\}$.
For an Euclidean space $\cV$, 
the truncated tensor algebra 
of degree $k~(k\in\N)$ over $\cV$ is defined by 
$T^k (\cV) := \oplus_{i=0}^k \cV^{\otimes i}$, 
where we set $\cV^{\otimes 0}:=\R$.
Let $\gamma \in (0,1]$, $p \in [1,\infty)$ and 
$(\alpha, m) \in (0,1]\times [1,\infty)$ with $\alpha -m^{-1} >0$.
For a continuous map $\eta \colon \triangle_{T} \to \cV$, 
we define three kinds of norms as follows:
\begin{align*}  
\|\eta\|_{\gamma\textrm{-Hld}} :&=\sup _{0\le s<t\le T} 
\frac{\left|\eta_{s,t}\right|}{(t-s)^{\gamma}},
\qquad
\|\eta\|_{\alpha, m\textrm{-Bes}} :=
\left(
\iint_{ \triangle_T}   \frac{| \eta_{s,t}|^m}{(t-s)^{1+\alpha m}}dsdt
\right)^{\frac{1}{m}},
\nn\\
\|\eta\|_{p\textrm{-var}} :&=
\left(
\sup_{\cP}  \sum_{i=1}^N | \eta_{t_{i-1}, t_i}|^p
\right)^{\frac{1}{p}},
\qquad \mbox{where}\quad
\cP =\{0=t_0 <t_1 <\cdots <t_N =T\}.
\end{align*}
Here, the supremum over $\cP$ runs over all partitions of $[0,T]$.
These are called
the $\gamma$-H\"older norm, the $(\alpha, m)$-Besov norm 
and the $p$-variation norm, respectively.
Clearly, 
if $0< \alpha < \gamma \le  1$, there exists a positive constant 
$C=C_{\gamma', \gamma, m}$ independent of $\eta$ such that
$
\|\eta\|_{\alpha, m\textrm{-Bes}} \le C \|\eta\|_{\gamma\textrm{-Hld}}
$
holds for all $\eta$.

%

\begin{itemize} 
\item
The set of all continuous path $\varphi\colon [0,T] \to\cV$
is denoted by $\cC (\cV)$. 
With the usual sup-norm $\|\varphi\|_{\infty}$ over the interval $[0,T]$,
$\cC (\cV)$ is a Banach space.
The difference of $\varphi$ is denoted by $\varphi^1$,
that is, $\varphi^1_{s,t} := \varphi_t - \varphi_s$ for $(s,t)\in \triangle_{T}$.
For $a, b \in \cV$, we set 
$\cC_a (\cV):=\{ \varphi \in \cC (\cV)\mid \varphi_{0}=a\}$ and 
$\cC_{a, b} (\cV):=\{ \varphi \in \cC (\cV)\mid \varphi_{0}=a, \varphi_{T}=b\}$ .

\item
Let $0< \gamma \le 1$.
The space of all $\gamma$-H\"older continuous paths on $[0,T]$ is denoted by $\cC^{\gamma\textrm{-Hld}} (\cV)= \{\varphi \in \cC (\cV) \mid 
\|\varphi^1\|_{\gamma\textrm{-Hld}} <\infty\}$.
We simply write $\|\varphi\|_{\gamma\textrm{-Hld}}
=\|\varphi^1\|_{\gamma\textrm{-Hld}}$, which is called $\gamma$-H\"older seminorm.
The norm on this Banach space is
 $|\varphi_0|+\|\varphi\|_{\gamma\textrm{-Hld}}$.
For $a, b \in \cV$, we set 
$\cC^{\gamma\textrm{-Hld}}_a (\cV) :=
\cC^{\gamma\textrm{-Hld}} (\cV)\cap \cC_a (\cV)$ and  
 $\cC^{\gamma\textrm{-Hld}}_{a,b} (\cV) :=
\cC^{\gamma\textrm{-Hld}} (\cV)\cap \cC_{a, b} (\cV)$.
The closure of the set of all $C^1$-paths 
in $\cC^{\gamma\textrm{-Hld}} (\cV)$ is denoted by 
$\tilde\cC^{\gamma\textrm{-Hld}} (\cV)$, which is a separable 
Banach subspace and is often
called the little $\alpha$-H\"odler space.
We set $\tilde\cC^{\gamma\textrm{-Hld}}_a (\cV)$
and $\tilde\cC^{\gamma\textrm{-Hld}}_{a,b} (\cV)$ in an analogous way.

\item
For  $p \in [1,\infty)$ and 
$(\alpha, m) \in (0,1]\times [1,\infty)$ with $\alpha -m^{-1} >0$,
we define 
$\cC^{p\textrm{-var}} (\cV)= \{\varphi \in \cC (\cV) \mid 
\|\varphi^1\|_{p\textrm{-var}} <\infty\}$
and  
$\cC^{\alpha, m\textrm{-Bes}} (\cV)= \{\varphi \in \cC (\cV) \mid 
\|\varphi^1\|_{\alpha, m\textrm{-Bes}} <\infty\}$.
We simply write 
$\|\varphi\|_{p\textrm{-var}}=\|\varphi^1\|_{p\textrm{-var}}$ and 
$\|\varphi\|_{\alpha, m\textrm{-Bes}}=\|\varphi^1\|_{\alpha, m\textrm{-Bes}}$.
The norms on these Banach spaces are 
$|\varphi_0|+ \|\varphi\|_{p\textrm{-var}}$ and 
$|\varphi_0|+\|\varphi\|_{\alpha, m\textrm{-Bes}}$, respectively.
We set $\cC^{p\textrm{-var}}_a (\cV)$, $\cC^{p\textrm{-var}}_{a,b} (\cV)$,
$\cC^{\alpha, m\textrm{-Bes}}_a (\cV)$, $\cC^{\alpha, m\textrm{-Bes}}_{a,b} (\cV)$ in an analogous way.

\item
Let $U \subset \cV$ be a domain.
For $k \in \N_0$,  $C^k (U, \cW)$ stands for the set of 
$C^k$-functions from $U$ to another Euclidean space $\cW$.
(When $k=0$, we simply write $C (U, \cW)$ 
instead of $C^0 (U, \cW)$.)
The set of bounded $C^k$-functions $f \colon U\to \cW$
whose derivatives up to order $k$ are all bounded 
is denoted by $C_{{\rm b}}^k (U, \cW)$, which is a Banach space with the norm
$\| f\|_{C_{{\rm b}}^k } := \sum_{i=0}^k \|\nabla^i f\|_{\infty}$.
(Here, $ \|\cdot\|_{\infty}$ stands for the usual sup-norm over $U$.)
As usual, we write 
$C^\infty (U, \cW):=\cap_{k\in \N} C^k (U, \cW)$
and 
$C_{{\rm b}}^\infty (U, \cW):=\cap_{k\in \N} C_{{\rm b}}^k (U, \cW)$.
The set of $f \in C_{{\rm b}}^k (U, \cW)$ with compact support 
is denoted by $C_{{\rm K}}^k (U, \cW)$ for $k \in \N_0 \cup \{\infty\}$.
When $\cW =\R$, we simply write $C^k (U)$, $C_{{\rm b}}^k (U)$,
$C_{{\rm K}}^k (U)$, etc. for simplicity.

\item
Let  $U \subset \cV$ be a domain
and $\gamma >0$. We write $\gamma = k+\alpha$ 
for $k \in \N$ and $\alpha \in (0,1]$ in a unique way. 
We say $f \colon U\to \cW$ is of ${\rm Lip}^\gamma$ if 
$f\in C^k_{\mathrm{b}}(U, \cW)$ and $\nabla^k f$ is 
$\alpha$-H\"older continuous on $U$. 
The set of all such ${\rm Lip}^\gamma$-functions is 
denoted by ${\rm Lip}^\gamma(U, \cW)$.
The ${\rm Lip}^\gamma$-norm is defined by
\[
\|f\|_{{\rm Lip}^\gamma} := \| f\|_{C_{\mathrm{b}}^k }
+ \sup_{x,y\in U, x\neq y} \frac{|f(x)- f(y)|}{|x-y|^\alpha}.
\]
Note that for $C^k_{\mathrm{b}}(U, \cW)\subsetneq
{\rm Lip}^k (U, \cW)$ if $k\in\N$.

\item
We denote by
$\mathscr{S}(\cV)$ the Schwartz class 
of smooth rapidly decreasing functions on $\cV$.
We denote by $\mathscr{S}^{\prime}(\cV)$
its dual, i.e. the space of tempered Schwartz distributions on $\cV$.

\item
Let $\gamma =1/q \in (1/2,1]$ and $m\in\N$. 
If $x$ belongs to $\cC_0^{\gamma\textrm{-Hld}} (\cV)$ 
or to $\cC_0^{q\textrm{-var}} (\cV)$, then we can define 
\[
S(x)^m_{s,t} := \int_{s\le t_1 \le \cdots\le t_m\le t}  dx_{t_1} \otimes\cdots\otimes dx_{t_m},
\qquad 
(s, t) \in \triangle_T
\]
as an iterated Young integral. 
We call $S(x)^m$ the $m$th signature of $x$.
It is well-known that $S_k (x)_{s,t} :=
(1, S(x)^1_{s,t}, \ldots, S(x)^k_{s,t})\in T^k (\cV)$ and 
Chen's relation holds, that is,
\[
S_k (x)_{s,t} = S_k (x)_{s,u}\otimes S_k (x)_{u,t}, \qquad 0\le s\le u \le t \le T.
\]
Here, $\otimes$ stands for the multiplication in $T^k (\cV)$.

\item
Let $\alpha \in (1/4, 1/2]$ and write $k:=\lfloor 1/\alpha\rfloor$. 
We recall the definition of an $\alpha$-H\"older rough path.
A continuous map 
$X=(1, X^{1},\ldots,  X^{k})\colon \triangle_{T} \to T^k (\cV)$
is called an $\alpha$-H\"older rough path over $\cV$
if 
$\|X^i\|_{i \alpha\textrm{-Hld}} <\infty$ for all $i\in \llbracket 1,k \rrbracket$ 
and Chen's relation
\begin{equation} \label{eq.0711-1}
X_{s, t}=X_{s, u}\otimes X_{u, t},
\qquad
0\le s\le u \le t \le T.
\end{equation}
holds in $T^k (\cV)$. 
The set of all $\alpha$-H\"older rough paths over $\cV$
 is denoted by 
$\Omega_{\alpha\textrm{-Hld}} (\cV)$.
With the distance 
$d_{\alpha\textrm{-Hld}} (X, \hat{X}) :=\sum_{i=1}^k\|X^i - \hat{X}^i\|_{i \alpha\textrm{-Hld}}$,
$\Omega_{\alpha\textrm{-Hld}} (\cV)$ is a complete metric space.
The homogeneous norm of $X$ is defined by 
$\vertiii{X}_{\alpha\textrm{-Hld}}:= \sum_{i=1}^k \|X^i\|_{i\alpha\textrm{-Hld}}^{1/i}$.
The dilation by $\delta \in \R$ is defined by 
$\delta X =(1, \delta X^{1},\ldots,  \delta^k X^{k})$. 
It is clear that 
$\vertiii{\delta X}_{\alpha\textrm{-Hld}}
=|\delta|\cdot\vertiii{X}_{\alpha\textrm{-Hld}}$.
A typical example of rough path 
is $S_k (w)$ for $w \in \cC_0^{\gamma\textrm{-Hld}} (\cV)$ 
with $\gamma \in (1/2,1]$, which is called a natural lift of $w$.
We view $S_k$ as a continuous map from $\cC_0^{\gamma\textrm{-Hld}} (\cV)$ to $\Omega_{\alpha\textrm{-Hld}} (\cV)$ and call it the (rough path) lift map.

\item
Let $\alpha \in (1/4, 1/2]$ and write $k:=\lfloor 1/\alpha\rfloor$. 
We define $G\Omega_{\alpha\textrm{-Hld}} (\cV)$ to be the 
$d_{\alpha\textrm{-Hld}}$-closure
of $S_k (\cC_0^{1\textrm{-Hld}} (\cV))$.
It is called the $\alpha$-H\"older 
geometric rough path space over $\cV$
and is a complete and separable metric space.
It also coincides with the $d_{\alpha\textrm{-Hld}}$-closure
of $S_k (\cC_0^{\gamma\textrm{-Hld}} (\cV))$ for any $1/2 <\gamma \le 1$.
A geometric rough path
 $X \in G\Omega_{\alpha\textrm{-Hld}} (\cV)$ satisfies another important algebraic property called the Shuffle relations.
To explain it, we identify $\cV =\R^d~(d=\dim \cV)$
and the coordinate of $X^i$'s are denoted by $X^{1,p}$, 
$X^{2,pq}$ and $X^{3,pqr}$ ($p,q,r \in \llbracket 1,d\rrbracket$).
Then we have
\begin{equation} \label{eq.0711-2}
X^{1,p}_{s,t}\,X^{1,q}_{s,t}= X^{2,pq}_{s,t}+X^{2,qp}_{s,t}, 
\quad
X^{1,p}_{s,t}\,X^{2,qr}_{s,t}=X^{3,pqr}_{s,t}+X^{3,qpr}_{s,t}+X^{3,qrp}_{s,t}
\end{equation}
for all $p,q,r \in \llbracket 1,d\rrbracket$ and $(s,t) \in \triangle_{T}$.
(When $1/3 <\alpha \le 1/2$, we only have the first formula.)
For basic information on $\alpha$-H\"older geometric rough paths,
the reader is referred to \cite[Chapter 9]{fvbook}. 
\end{itemize}

\begin{remark} 
For $p \in [2,4)$, the geometric rough path space 
$G\Omega_{p\textrm{-var}} (\cV)$ is defined 
as the closure of 
$S_k (\cC_0^{q\textrm{-var}} (\cV))$ 
 with respect to the $p$-variation topology 
for any $1 \le q <2$ in essentially the same way as the $1/p$-H\"older case.
(Since we will not use $G\Omega_{p\textrm{-var}} (\cV)$ 
very often in this work, we do not  explanation it in details.)
For basic information on geometric rough paths 
in the H\"older and the variational topologies,  
the reader is referred to \cite{lclbook} or \cite[Chapter 9]{fvbook}
among others. 
\end{remark}

\section{Preliminaries from rough path theory, Mallavin calculus
and quasi-sure analysis}
\label{sec.RPMal}

In this section we recall several known facts from rough path theory, 
Mallavin calculus and quasi-sure analysis.
Those will be used in the main parts of this paper.
There are no new results in this section.

\subsection{
Review of Besov rough path spaces
}

For the pair of parameters $(\alpha, m)$, we assume the 
following condition:
\begin{equation} \label{cond.0310-1}
\frac14 < \alpha \le \frac12,\quad m \in \N
\quad\mbox{and}\quad
\alpha -\frac{1}{12m} > \frac{1}{\lfloor 1/\alpha\rfloor +1}
\end{equation}
For $(\alpha, m)$ as in \eqref{cond.0310-1}, we define a Besov-type geometric rough path space over a Euclidean space $\cV$. 
In this subsection
we write $k:=\lfloor 1/\alpha\rfloor$ and $\beta := \alpha -1/(12m)$
for simplicity of notation.

A continuous map 
$X=(1, X^{1},\ldots,  X^{k})\colon \triangle_{T} \to T^k (\cV)$
is called an $(\alpha, 12m)$-Besov rough path 
over $\cV$ if 
$\|X^i\|_{i \alpha, 12m/i\textrm{-Bes}} <\infty$ for all $i\in \llbracket 1,k \rrbracket$ and Chen's relation
$
X_{s, t}=X_{s, u}\otimes X_{u, t}
$
holds in $T^k (\cV)$ for all $0\le s\le u \le t\le T$.
Note that the second parameter $12m/i$ is an even integer for every $i$.
(This is important
when Besov rough paths are used in conjunction with Malliavin calculus.)
The set of all $(\alpha, 12m)$-Besov rough paths over $\cV$ is denoted by 
$\Omega_{\alpha, 12m\textrm{-Bes}} (\cV)$
with the distance 
$d_{\alpha, 12m\textrm{-Bes}} (X, \hat{X}) 
:=\sum_{i=1}^k\|X^i - \hat{X}^i\|_{i \alpha, 12m/i\textrm{-Bes}}$.
The homogeneous norm of $X$ is denoted by 
$\vertiii{X}_{\alpha, 12m\textrm{-Bes}}:= 
\sum_{i=1}^k \|X^i\|_{i\alpha, 12m/i\textrm{-Bes}}^{1/i}$.
Clearly,
$\vertiii{\delta X}_{\alpha, 12m\textrm{-Bes}}
=|\delta|\cdot\vertiii{X}_{\alpha, 12m\textrm{-Bes}}$ for $\delta \in \R$.

If $x\in \cC_0^{1\textrm{-Hld}} (\cV)$, then
 $S_k (x) \in \Omega_{\alpha, 12m\textrm{-Bes}} (\cV)$. (We write $X= S_k (x)$ in this paragraph.)
For $x, \hat{x}\in \cC_0^{1\textrm{-Hld}}  (\cV)$, the following estimates are known
(see \cite[Appendix A.2]{fvbook}): There exists a constant 
$C>0$ which depends only on $(\alpha, 12m)$ such that
\begin{align}  
\|X^1 - \hat{X}^1\|_{\beta\textrm{-Hld}}
&\le C \|X^1 - \hat{X}^1\|_{ \alpha, 12m\textrm{-Bes}},
\label{ineq.0311-1}
\\
\|X^2 - \hat{X}^2\|_{2\beta\textrm{-Hld}}
&\le 
C \{ 
\|X^2 - \hat{X}^2\|_{2 \alpha, 6m\textrm{-Bes}}
+
K_1 \|X^1 - \hat{X}^1\|_{ \alpha, 12m\textrm{-Bes}}
\},
\label{ineq.0311-2}
\\
\|X^3 - \hat{X}^3\|_{3\beta\textrm{-Hld}}
&\le C\{  \|X^3 - \hat{X}^3\|_{3 \alpha, 4m\textrm{-Bes}}
\nn\\
&\quad +
K_1 \|X^2 - \hat{X}^2\|_{ 2\alpha, 6m\textrm{-Bes}}
+
K_2 \|X^1 - \hat{X}^1\|_{ \alpha, 12m\textrm{-Bes}}
\}
\label{ineq.0311-3}
\end{align}
for all $x, \hat{x}\in \cC_0^1 (\cV)$,
where we set 
\begin{align} 
K_1 :&= \|X^1\|_{ \alpha, 12m\textrm{-Bes}}
\vee \|\hat{X}^1\|_{ \alpha, 12m\textrm{-Bes}},
\nn\\
K_2 :&= \|X^1\|_{ \alpha, 12m\textrm{-Bes}}^2
\vee \|\hat{X}^1\|_{ \alpha, 12m\textrm{-Bes}}^2
\vee \|X^2\|_{ 2\alpha, 6m\textrm{-Bes}}
\vee \|\hat{X}^2\|_{ 2\alpha, 6m\textrm{-Bes}}.
\nn
\end{align} 
(Note that \eqref{ineq.0311-3} does not exist if $\alpha >1/3$.)

If $\{x(m)\}_{m\in\N}\subset  \cC_0^{1\textrm{-Hld}} (\cV)$ is a sequence 
such that $\{X(m):= S_k (x(m))\}_{m\in\N}$ is Cauchy 
in the $(\alpha, 12m)$-Besov topology, 
its limit $X(\infty)$ belongs to 
$G\Omega_{\beta\textrm{-Hld}} (\cV)$
due to \eqref{ineq.0311-1}--\eqref{ineq.0311-3}
and therefore $X(\infty)$ is continuous on $\triangle_T$ and 
$X(\infty)\in \Omega_{\alpha, 12m\textrm{-Bes}} (\cV)$. 
Define 
\[
G\Omega_{\alpha, 12m\textrm{-Bes}} (\cV)
:= 
\overline{
\left\{ S_k (x) \mid x \in \cC_0^1 (\cV)
\right\}
},
\]
where the closure is taken with respect to the $(\alpha, 12m)$-Besov topology.
As we have seen $G\Omega_{\alpha, 12m\textrm{-Bes}} (\cV)
\subset G\Omega_{\beta\textrm{-Hld}} (\cV)$.
Moreover, \eqref{ineq.0311-1}--\eqref{ineq.0311-3} still hold 
for all $X, \hat{X} \in G\Omega_{\alpha, 12m\textrm{-Bes}} (\cV)$.
By way of construction, $G\Omega_{\alpha, 12m\textrm{-Bes}} (\cV)$
is a complete and separable metric space.

In summary, we have the following locally Lipschitz continuous
\footnote{
In this paper, we say that 
a map from a metric space to another metric space is  
locally Lipschitz continuous if its restriction to 
every bounded subset is Lipschitz continuous.
}
embeddings:
\begin{equation}\label{ineq.0311-4}
G\Omega_{\alpha'\textrm{-Hld}} (\cV)
\hookrightarrow
G\Omega_{\alpha, 12m\textrm{-Bes}} (\cV)
\hookrightarrow
G\Omega_{\beta\textrm{-Hld}} (\cV)
\qquad
(\beta := \alpha -\tfrac{1}{12m}).
\end{equation}
The left embedding holds if $1/(k+1) <\alpha <\alpha' \le 1/k$, while the right 
embedding holds under \eqref{cond.0310-1}.

\subsection{Review of Malliavin calculus and quasi-sure analysis}

Let $(\cW, \cH, \mu)$ be an abstract Wiener space,
that is, ~$(\cW, \|\cdot\|_{\cW})$~is a separable Banach space, ~$(\cH, \|\cdot\|_{\cH})$~ is a separable Hilbert space, $\cH$~is a dense subspace of~$\cW$~and the inclusion map is continuous, and~$\mu$~is a (necessarily unique) Borel probability measure
 on $\cW$ with the property that
\begin{equation}\label{abspro}
\int_{\cW}\exp\Bigl(\sqrt{-1}_{\cW^*}\langle \lambda, w \rangle_{\cW}\Bigr)\mu (d w)=\exp\Bigl(-\frac{1}{2}\|\lambda\|^2_{\cH}\Bigr),
\qquad
\lambda \in \cW^* \subset \cH^*,
\end{equation}
where we have used the fact that $\cW^*$ becomes a dense subspace of $\cH$ when we make the natural identification between $\cH^*$ and $\cH$ itself. 
Hence, $\cW^* \hookrightarrow \cH^* =\cH \hookrightarrow\cW$
and both inclusions are continuous and dense.
A generic element of $\cW$ is denoted by $w$, while 
that of $\cH$ is denoted by $h$ or $k$.

We denote by $\{ \langle k, \,\cdot\, \rangle \mid k\in \cH\}$ 
the family of centered Gaussian random variables 
defined on $\cW$ indexed by $\cH$
(i.e. the homogeneous Wiener chaos of order $1$).
If $\langle k,  \,\cdot\, \rangle_{\cH} \in \cH^*$ 
extends to an element of $\cW^*$,
then the extension coincides with the  random 
variable $\langle k,  \,\cdot\, \rangle$.
(When $\langle k,  \,\cdot\, \rangle_{\cH} \in \cH^*$ does not 
extend to an element of $\cW^*$, $\langle k,  \,\cdot\, \rangle$
is define as  the $L^2$-limit of $\{\langle k_n,  \,\cdot\, \rangle \}_{n=1}^\infty$, where  $\{k_n\}_{n=1}^\infty$ 
is any sequence of $\cH$ such that 
$\langle k_n,  \,\cdot\, \rangle_{\cH}\in \cW^*$ for all $n$ and $\lim_{n\to\infty} \|k_n -k\|_\cH=0$.)
We also denote by $\tau_k \colon \cW \to \cW$ the translation 
$\tau_k (w) =w+k$.
(For basic information on abstract Wiener spaces, see \cite{sh, hu} among others.)

Now let us recall a few important properties of the Wiener chaos.
For $m\in \N_0$, denote by
$\mathscr{C}_m$ the $m$th homogeneous Wiener chaos
(see \cite[Chapter 1]{sh} for example).
It is well-known that $\mathscr{C}_0$ is the space of 
constant functions
and $\mathscr{C}_1 = \{ \langle k,  \,\cdot\, \rangle \mid k\in \cH\}$.
The $m$th inhomogeneous Wiener chaos is denoted by
 $\mathscr{C}^{\prime}_m := \oplus_{i=0}^m  \mathscr{C}_i$.
Heuristically, $\mathscr{C}^{\prime}_m$ is the set of 
all real-valued
``Gaussian polynomials" on $\cW$ of order at most $m$.
The most important fact is the orthogonal decomposition
$L^2 (\mu) =  \oplus_{i=0}^\infty  \mathscr{C}_i$.
Moreover, $\mathscr{C}_m$ is the eigenspace of
the Ornstein-Uhlenbeck operator
 on $L^2 (\mu)$ associated with the eigenvalue $-m$.
If each component of $\R^n$-valued function $F =(F^1, \ldots, F^n)$ on $\cW$ 
belongs to $\mathscr{C}_m$ (resp. $\mathscr{C}^\prime_m$),
we say that $F$ belongs to $\mathscr{C}_m (\R^n)$
(resp. $\mathscr{C}^\prime_m (\R^n)$).
Restricted to each $\mathscr{C}_m~(m\in\N_0)$,
all the $L^p$-norms ($1 < p <\infty$) are equivalent,
that is,
there exists a constant $C=C_{m,p} \ge 1$ such that
\begin{equation}\nn
C^{-1} \| F\|_{L^2}  \le  \| F\|_{L^p} \le  C \| F\|_{L^2},
\qquad
F \in \mathscr{C}_m.
\end{equation}
This is a consequence of the hypercontractivity of the Ornstein-Uhlenbeck
semigroup. For details, see \cite[Theorem 2.14]{sh} among others.

Here we summarize some basic facts in Malliavin calculus,
which are related to Watanabe distributions
(i.e. generalized Wiener functionals) and quasi-sure analysis.
Most of the contents and the notation
in this subsection are found in 
 \cite[Sections V.8--V.10]{iwbk} with trivial modifications.
Also, \cite{sh, nu, hu, mt} are good textbooks of Malliavin calculus.
For basic results of quasi-sure analysis, we refer to \cite[Chapter II]{ma}.
We will use the following notation and facts in the main parts of this paper.
\begin{enumerate}
\item[{\bf (a)}]
Sobolev spaces ${\bf D}_{p,r} ({\cal K})$ of ${\cal K}$-valued 
(generalized) Wiener functionals, 
where ${\cal K}$ is a real separable Hilbert space and
$p \in (1, \infty)$, $r \in {\mathbb R}$.
As usual, we will use the spaces 
${\bf D}_{\infty} ({\cal K})= \cap_{k=1 }^{\infty} \cap_{1<p<\infty} {\bf D}_{p,k} ({\cal K})$, 
$\tilde{{\bf D}}_{\infty} ({\cal K}) 
= \cap_{k=1 }^{\infty} \cup_{1<p<\infty}  {\bf D}_{p,k} ({\cal K})$ of test functionals 
and  the spaces ${\bf D}_{-\infty} ({\cal K}) = \cup_{k=1 }^{\infty} \cup_{1<p<\infty} {\bf D}_{p,-k} ({\cal K})$, 
$\tilde{{\bf D}}_{-\infty} ({\cal K}) = \cup_{k=1 }^{\infty} \cap_{1<p<\infty} {\bf D}_{p,-k} ({\cal K})$ of 
 Watanabe distributions as in \cite{iwbk}.
When ${\cal K} ={\mathbb R}$, we simply write ${\bf D}_{p, r}$, etc.
\item[{\bf (b)}] 
For 
$F =(F^1, \ldots, F^n) \in {\bf D}_{\infty} ({\mathbb R}^n)$, we denote by 
$\sigma^{ij}_F (w) =  \la DF^i (w),DF^j (w)\ra_{{\cal H}}$
 the $(i,j)$-component of Malliavin covariance 
 matrix ($n\in \N$, $1 \le i,j \le n$).
We say that $F$ is non-degenerate in the sense of Malliavin
if $(\det \sigma_F)^{-1} \in  \cap_{1<p< \infty} L^p (\mu)$.
Here, $D$ is the $\cH$-derivative
(the gradient operator in the sense of Malliavin calculus).
If $F \in {\bf D}_{\infty} ({\mathbb R}^n)$
is non-degenerate, its law on $\R^n$ admits a bounded 
and continuous
density $p_F =p_F (y)$ with respect to the Lebesgue measure $dy$,
that is, $\mu \circ F^{-1}= p_F (y)dy$.
(This fact is quite famous. See nice textbooks on Malliavin calculus.
In fact, $p_F \in  \mathscr{S}({\mathbb R}^n)$, 
where $\mathscr{S}({\mathbb R}^n)$ stands for the Schwartz class
of smooth rapidly decreasing functions.)
\item
[{\bf (c)}] Pullback $T(F)=T \circ F \in \tilde{\bf D}_{-\infty}$ of a tempered Schwartz distribution $T \in \mathscr{S}^{\prime}({\mathbb R}^n)$
on ${\mathbb R}^n$
by a non-degenerate Wiener functional $F \in {\bf D}_{\infty} ({\mathbb R}^n)$. 
Loosely speaking, the mapping $T \mapsto T (F)$ is continuous.
The most important example of $T$ is Dirac's delta function.
In that case, ${\mathbb E}[\delta_z (F)] :=\la \delta_z (F),1\ra =p_F (z)$ 
holds for every $z\in\R^n$.
Here, $\la \star, *\ra$ denotes the pairing of ${\bf D}_{-\infty}$ and ${\bf D}_{\infty}$ as usual
(see \cite[Section 5.9]{iwbk}).
The key to prove this pullback is an integration by parts formula 
in the sense of Mallavin calculus.
(Its generalization is given in Item {\bf (d)} below.)

\item[{\bf (d)}]
A generalized version of the integration by parts formula in the sense 
of Malliavin calculus for Watanabe distribution,
which is given as follows (see \cite[p. 377]{iwbk}):
For a non-degenerate Wiener functional
$F =(F^1, \ldots, F^n) \in {\mathbf D}_{\infty} ({\mathbb R}^n)$, 
we denote by $\gamma^{ij}_F (w)$ the $(i,j)$-component of the inverse matrix $\sigma^{-1}_F (w)$.
Note that $\sigma^{ij}_F \in {\mathbf D}_{\infty} $ and
$D \gamma^{ij}_F =- \sum_{k,l} \gamma^{ik}_F ( D\sigma^{kl}_F ) \gamma^{lj}_F $.
Hence, derivatives of $\gamma^{ij}_F$ can be written in terms of
$\gamma^{ij}_F$'s and the derivatives of $\sigma^{ij}_F$'s.
In particular, $\gamma^{ij}_F \in {\mathbf D}_{\infty} $, too.
If
$G \in {\mathbf D}_{\infty}$ and $T \in \mathscr{S}^{\prime} ({\mathbb R}^n)$,
then the following integration by parts holds for all $i~(1 \le i \le n)$:
\begin{align}
{\mathbb E} \bigl[
(\partial_i T \circ F )  G 
\bigr]
=
{\mathbb E} \bigl[
(T \circ F )  \Phi_i (\, \cdot\, ;G)
\bigr],
\label{ipb1.eq}
\end{align}
where $\Phi_i (\,\cdot\, ;G) \in  {\mathbf D}_{\infty}$ is given by 
\begin{align}
\Phi_i (w ;G) &=
\sum_{j=1}^n  D^* \Bigl(   \gamma^{ij }_F(w)  G (w) DF^j(w)
\Bigr)
\nn\\
&=
-
\sum_{j=1}^n
\Bigl\{
-\sum_{k,l =1}^d G(w) \gamma^{ik }_F (w)\gamma^{jl }_F (w)    \la D\sigma^{kl}_F (w),DF^j (w)\ra_{{\cal H}}
\nn\\
&
\qquad\qquad
+
\gamma^{ij }_F (w) \la DG (w),DF^j (w)\ra_{{\cal H}} + \gamma^{ij }_F (w) G (w) LF^j (w)
\Bigr\}.
\label{ipb2.eq}
\end{align}
Note that the expectations in (\ref{ipb1.eq}) are in fact  the generalized ones and that $L = -D^*D$ is the Ornstein-Uhlenbeck operator.
(We often use this formula repeatedly. For this, see Remark \ref{notation_ibp} below.)


\item[{\bf (e)}]
If $\eta \in {\bf D}_{-\infty}$ satisfies that
${\mathbb E}[\eta \, G]:=\la \eta, G \ra \ge 0$ for every non-negative $G \in {\bf D}_{\infty}$,
it is called a positive Watanabe distribution.
According to Sugita's theorem (see \cite[P. 101]{ma}), for every positive Watanabe distribution $\eta$,
there exists a unique finite Borel measure $\mu_{\eta}$ on $\cW$
such that
\begin{equation}\label{eq.sgt_formula}
\la \eta, G\ra = \int_{\cW} \tilde{G} (w)  \mu_{\eta} (dw), 
\qquad G \in {\bf D}_{\infty}
\end{equation}
holds,
where $\tilde{G}$ stands for an $\infty$-quasi-continuous modification of 
$G$.
If $\eta \in {\bf D}_{p, -k}$ is positive, then it holds that 
\begin{equation}\label{sugisugi}
\mu_{\eta}(A) \le \| \eta \|_{\mathbf{D}_{p, -k}} {\rm Cap}_{q,k} (A)
\qquad
\mbox{for every Borel subset $A\subset \cW$,}
\end{equation}
where $p, q  \in (1, \infty)$ with $1/p +1/q =1$, $k \in \N$,
and ${\rm Cap}_{q,k}$ stands for the $(q,k)$-capacity
associated with ${\bf D}_{q,k}$.
(For more details, see \cite[Chapter II]{ma}.)

\item[{\bf (f)}]
Restricted to each $\mathscr{C}^\prime_m (\R^n)~(m\in\N_0, n\in\N)$, 
all $\mathbf{D}_{p,k}$-norms ($2 \le p <\infty$, $k \in \N_0$) are equivalent, that is, 
there exists a constant $C=C_{m, n, p, k} >0$ satisfying that
\begin{equation}\label{eq.0312-2}
\| F\|_{L^2}  \le  \| F\|_{\mathbf{D}_{p,k}} \le  C \| F\|_{L^2},
\qquad
F \in \mathscr{C}^\prime_m (\R^n).
\end{equation}

\end{enumerate}

\begin{remark}\label{notation_ibp}
To use the integration by parts formula \eqref{ipb1.eq}--\eqref{ipb2.eq}
repeatedly, we set the notation.
For $\mathbf{i} =(i_1, \ldots, i_k) \in \{1,\ldots, n\}^k$ ($k \in\N$)
and $G \in {\mathbf D}_{\infty}$,  
we define 
$\Phi_{\mathbf{i}} (w ;G) \in  {\mathbf D}_{\infty}$ 
in a recursive way as follows.
For $k=1$, 
$\Phi_{\mathbf{i}} (w ;G)= \Phi_{i_1} (w ;G)$
is given by \eqref{ipb2.eq} with $i =i_1$.
For $k\ge 2$, we set 
\[
\Phi_{\mathbf{i}} (w ;G)
:= 
\Phi_{i_1} \left(w ;   \Phi_{(i_2, \ldots, i_k)} (\,\cdot\, ;G)  \right).
\]
Notice that this involves $D^l G~(0\le l\le  k)$, but not $D^{k+1} G$.
Then, we have 
\begin{align}
{\mathbb E} \bigl[
(\partial^k_{\mathbf{i}} T \circ F )  G 
\bigr]
=
{\mathbb E} \bigl[
(T \circ F )  \Phi_{\mathbf{i}} (\, \cdot\, ;G)
\bigr],
\label{ipb8.eq}
\end{align}
where we set $\partial^k_{\mathbf{i}} :=\partial_{i_1} \cdots \partial_{i_k}$.
\end{remark}

\begin{remark} \label{rem.SnoISO}
In this remark, we explain the precise meaning of the continuity 
of Watanabe's composition map $T \mapsto T(F)=T \circ F$ 
when $F \in {\bf D}_{\infty} ({\mathbb R}^n)$ is non-degenerate.
It should be noted that $\mathscr{S}^\prime ({\mathbb R}^n)$ is not 
equipped with the usual topology of ``pointwise" convergence.

For $k \in \Z$, we define the $\mathscr{S}_{2k}$-norm 
of $g \in \mathscr{S} ({\mathbb R}^n)$ by
$
\|g\|_{\mathscr{S}_{2k}}= \| (1+|\,\cdot\, |^2 -\Delta/2)^k g\|_\infty$.
Here, $\mathscr{S} ({\mathbb R}^n)$ is the Schwartz space of 
rapidly decreasing functions on $\R^n$, 
$\| \cdot\|_\infty$ is the usual sup-norm and $\Delta$ is the Laplacian on 
$\R^n$.
Set $\mathscr{S}_{2k}({\mathbb R}^n)$ to be the closure of 
$\mathscr{S} ({\mathbb R}^n)$ with respect to this  $\mathscr{S}_{2k}$-norm.
Then, it is known that 
$\{\mathscr{S}_{2k}({\mathbb R}^n)\}_{k\in\Z}$ is non-increasing in $k$
and $\cap_{k \in \N}\mathscr{S}_{2k}({\mathbb R}^n) =\mathscr{S}({\mathbb R}^n)$ and $\cup_{k \in \N}\mathscr{S}_{-2k}({\mathbb R}^n)
 =\mathscr{S}^\prime ({\mathbb R}^n)$. Moreover, 
 $\mathscr{S}_0 ({\mathbb R}^n)$ is the Banach space of 
 continuous functions on $\R^n$ which vanish at $\infty$.

Then, for every $k \in N$ and $1<p<\infty$, 
$g \mapsto g (F)$ extends to a bounded linear map from 
$\mathscr{S}_{-2k}({\mathbb R}^n)$ to ${\bf D}_{p, -2k}$.
(See \cite[Theorem 9.1, pp. 378-379]{iwbk} or \cite[Section 5.4]{mt}.)
This continuity is the main reason why we can
 define $T \circ F$ for an
 arbitrary $T\in \mathscr{S}^\prime ({\mathbb R}^n)$.
\end{remark}

A prominent example of $T\in \mathscr{S}^\prime ({\mathbb R}^n)$
is Dirac's delta function $\delta_z ~(z\in \R^n)$.
Here, we recall some basic properties of 
Watanabe's pullback of the delta functions $\delta_z (F)$ 
when $F \in {\bf D}_{\infty} ({\mathbb R}^n)$ is non-degenerate.

First, recall that, for sufficiently large constant $l \in \N$, the map
\begin{equation}\nn
\R^n \ni z \mapsto \delta_z \in  \mathscr{S}_{-2l}({\mathbb R}^n)
\end{equation}
is continuous and bounded 
(see \cite[Lemma 9.1]{iwbk} and its proof).
Then,  by the continuity of Watanabe's decomposition (see Remark \ref{rem.SnoISO}), the map
\[
\R^n \ni z \mapsto \delta_z (F) \in  {\bf D}_{p, -2l}
\]
is bounded and continuous for every $p \in (1,\infty)$. 
This in turn implies that the map
\begin{equation}\label{conti1}
\R^n \ni z \mapsto {\mathbb E}[\delta_z (F) G] \in  \R
\end{equation}
is bounded and continuous for every $G \in {\bf D}_{\infty}$.
(In fact, this function is known to belong to 
$\mathscr{S} ({\mathbb R}^n)$, but this fact is not used in this paper.)
In particular, the density function 
$p_F (z) := {\mathbb E}[\delta_z (F)] $ of the law of $F$ under $\mu$ is 
continuous and bounded in $z$.

Furthermore, for every $\chi \in \mathscr{S}({\mathbb R}^n)$,
\[
\chi = \int_{\R^n} \chi (z) \delta_z \, dz \, \in \mathscr{S}_{-2l}({\mathbb R}^n).
\]
Here, the right hand side is a Bochner integral.
(Clearly,  $z \mapsto |\chi (z)| \|\delta_z\|_{\mathscr{S}_{-2l}}$ is integrable 
with respect to the Lebesgue measure $dz$ on $\R^n$.)
Then, it immediately follows that 
\begin{equation}\label{eq.1008-1}
 {\mathbb E}[\chi (F) G]=\int_{\R^n} \chi (z){\mathbb E}[\delta_z (F) G]\, dz,
 \qquad 
 G \in {\bf D}_{\infty}, \, \chi \in \mathscr{S}({\mathbb R}^n).
\end{equation}


Let $z\in {\mathbb R}^n$ and let 
$\{ \chi_n \}_{n\in\N} \subset \mathscr{S}({\mathbb R}^n)$ such that
$\lim_{n\to \infty} \int f (\hat{z}) \chi_n (\hat{z})d\hat{z} = f(z)$ for every 
$f\in C_{{\rm b}} (\R^n)$.
By the continuity in \eqref{conti1}, we then have 
\begin{equation} \label{conti2}
\lim_{n\to \infty}  {\mathbb E}[\chi_n (F) G]
={\mathbb E}[\delta_z (F) G],
 \qquad 
 G \in {\bf D}_{\infty}.
\end{equation}

As one can easily guess, Watanabe's pullback of delta functions are positive.
This fact is well-known and can be found in \cite[Proposition 5.4.13]{mt}.
Since this is of particular importance in this paper, we 
provide a short proof, which is different from that in \cite{mt}.

\begin{lemma}\label{lem.jigoku1}
Suppose that $F \in {\bf D}_{\infty} ({\mathbb R}^n)$ is non-degenerate.
Then, $\delta_z (F)$ is a positive Watanabe distribution for every $z \in \R^n$.
\end{lemma}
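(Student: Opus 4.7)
The plan is to approximate the delta functional $\delta_z$ from above by an approximate identity of non-negative Schwartz functions, and then use the continuity established in \eqref{conti2} to transport positivity through Watanabe's pullback.

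Concretely, I would fix $z \in \mathbb{R}^n$ and pick a non-negative mollifying sequence $\{\chi_n\}_{n\in\N} \subset \mathscr{S}(\mathbb{R}^n)$ with $\chi_n \ge 0$, $\int \chi_n(\hat{z})\, d\hat{z} = 1$, and $\mathrm{supp}\,\chi_n$ shrinking to $\{z\}$ (for instance, translated and rescaled Gaussians centered at $z$). By dominated convergence, $\int f(\hat z)\chi_n(\hat z)\,d\hat z \to f(z)$ for every $f\in C_{\mathrm b}(\R^n)$, which is exactly the hypothesis needed for \eqref{conti2} to apply.

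Now fix a non-negative $G \in \mathbf{D}_\infty$. Since $\chi_n \ge 0$ pointwise on $\R^n$, the honest random variable $\chi_n(F)$ is non-negative $\mu$-a.s., and therefore
\[
\E[\chi_n(F)\,G] \ge 0 \qquad \text{for every } n\in\N.
\]
Invoking \eqref{conti2} with this $G$, the left-hand side converges to $\langle \delta_z(F), G\rangle = \E[\delta_z(F)\,G]$ as $n\to\infty$. Consequently $\langle \delta_z(F), G\rangle \ge 0$, which is precisely the definition of $\delta_z(F) \in \tilde{\mathbf D}_{-\infty}$ being a positive Watanabe distribution.

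There is no real obstacle here: once the continuity statement \eqref{conti2} is in hand, the argument is just passage to the limit along a non-negative approximate identity, and the non-degeneracy of $F$ enters only through the fact that $\delta_z(F)$ has already been defined as an element of $\mathbf{D}_{p,-2l}$ for some large $l$. The only minor point to be careful about is that the approximants $\chi_n$ are bounded and continuous on $\R^n$, so that $\chi_n(F)$ is a bona fide element of $\mathbf{D}_\infty$ (hence the pairing with $G$ coincides with the ordinary expectation), and that the limit \eqref{conti2} is valid for every $G \in \mathbf{D}_\infty$, not merely in some weak topological sense.
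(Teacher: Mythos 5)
Your proof is correct and is essentially identical to the paper's: the paper also takes a non-negative sequence $\{\chi_n\}\subset \mathscr{S}(\R^n)$ approximating $\delta_z$ and invokes \eqref{conti2} to conclude ${\mathbb E}[\delta_z(F)G]\ge 0$ for non-negative $G\in\mathbf{D}_\infty$. Your write-up merely spells out the intermediate inequality $\E[\chi_n(F)G]\ge 0$ and the choice of mollifier more explicitly.
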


\begin{proof} 
Take any non-negative $G \in {\bf D}_{\infty}$ and any $z\in \R^n$.
It is sufficient to show that ${\mathbb E}[\delta_z (F) G] \ge 0$. 
Then, there exists 
$\{ \chi_n \}_{n\in\N} \subset \mathscr{S}({\mathbb R}^n)$ such that
{\rm (i)}~$\chi_n$ is non-negative for all $n\in\N$ and {\rm (ii)}~$\lim_{n\to \infty} \int f (\hat{z}) \chi_n (\hat{z})d\hat{z} = f(z)$ for every $f\in C_{{\rm b}} (\R^n)$.
Then, the lemma immediately follows from \eqref{conti2}.
\end{proof}

Intuitively, it should be clear that the measure 
corresponding to $\delta_z (F)$
via Sugita's theorem is concentrated on $F^{-1} (z)$.
The precise statement should be as in the following
 lemma, which turns out to be quite useful when we do quasi-sure analysis.
(This is also well-known and 
written in \cite[p. 198]{tw} for instance, but without proof.)

\begin{lemma}\label{lem.jigoku2}
Suppose that $F \in {\bf D}_{\infty} ({\mathbb R}^n)$ is non-degenerate
and ${\mathbb E}[\delta_z (F)] > 0$.
Denote by $\nu_z$ the finite Borel measure that corresponds to 
the positve distributonion $\delta_z (F)$.
Then, 
\[
\nu_z (\{ w \in \cW\mid \tilde{F} \neq z\})
= (\nu_z \circ  \tilde{F}^{-1} )(\R^n \setminus \{z\}) =0.
\] 
Here, $\tilde{F} $ stands for an $\infty$-quasi-continuous modification of $F$.
\end{lemma}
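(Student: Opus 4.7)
The strategy is to test $\nu_z$ against smooth functions that vanish at $z$: if $\psi\in C_b^\infty(\R^n)$ satisfies $\psi(z)=0$, I expect $\langle\delta_z(F),\psi(F)\rangle=0$, and Sugita's formula \eqref{eq.sgt_formula} should then turn this into $\int \psi(\tilde F)\,d\nu_z=0$. Letting $\psi$ approximate the indicator of $\R^n\setminus\{z\}$ from below will yield the claim.

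First, I would reduce to a one-parameter family of events: since
\[
\{w\in\cW\mid \tilde F(w)\neq z\}
=\bigcup_{k\in\N}\tilde F^{-1}\bigl(\{y\in\R^n : |y-z|>1/k\}\bigr),
\]
$\sigma$-additivity of $\nu_z$ reduces the lemma to showing
$\nu_z\bigl(\tilde F^{-1}(\{|y-z|>\varepsilon\})\bigr)=0$ for every $\varepsilon>0$. For each such $\varepsilon$, I fix a cutoff $\psi_\varepsilon\in C_b^\infty(\R^n)$ with $0\le \psi_\varepsilon\le 1$, $\psi_\varepsilon\equiv 0$ on $\{|y-z|\le \varepsilon/2\}$, and $\psi_\varepsilon\equiv 1$ on $\{|y-z|\ge \varepsilon\}$. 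Since $F\in\mathbf{D}_\infty(\R^n)$, the chain rule in Malliavin calculus gives $\psi_\varepsilon(F)\in\mathbf{D}_\infty$, and obviously $\psi_\varepsilon(F)\ge 0$.

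Next, I would compute the pairing $\langle\delta_z(F),\psi_\varepsilon(F)\rangle$ by approximation. Choose a sequence $\{\chi_n\}_{n\in\N}\subset\mathscr S(\R^n)$ with $\chi_n\ge 0$ and $\int f(\hat z)\chi_n(\hat z)\,d\hat z\to f(z)$ for every $f\in C_b(\R^n)$, as in the proof of Lemma \ref{lem.jigoku1}. Because $\chi_n\psi_\varepsilon\in\mathscr S(\R^n)$ and $p_F$ is bounded continuous, \eqref{eq.1008-1} gives
\[
\E\bigl[\chi_n(F)\psi_\varepsilon(F)\bigr]
=\int_{\R^n}\chi_n(\hat z)\,\psi_\varepsilon(\hat z)\,p_F(\hat z)\,d\hat z
\ \xrightarrow[n\to\infty]{}\ \psi_\varepsilon(z)\,p_F(z)=0,
\]
while by \eqref{conti2} the left-hand side also converges to $\langle\delta_z(F),\psi_\varepsilon(F)\rangle$. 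Hence this pairing vanishes.

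Finally I would transfer this back to the measure $\nu_z$. Because $\tilde F$ is an $\infty$-quasi-continuous modification of $F$ and $\psi_\varepsilon$ is continuous and bounded, $\psi_\varepsilon\circ\tilde F$ is itself $\infty$-quasi-continuous and coincides $\mu$-a.s.\ with $\psi_\varepsilon(F)$, so it serves as an $\infty$-quasi-continuous modification of $\psi_\varepsilon(F)$; the capacity estimate \eqref{sugisugi} then forces $\widetilde{\psi_\varepsilon(F)}=\psi_\varepsilon(\tilde F)$ $\nu_z$-a.e. Sugita's formula \eqref{eq.sgt_formula} therefore yields
\[
0=\langle\delta_z(F),\psi_\varepsilon(F)\rangle
=\int_{\cW}\psi_\varepsilon(\tilde F(w))\,\nu_z(dw)
\ \ge\ \nu_z\bigl(\tilde F^{-1}(\{|y-z|>\varepsilon\})\bigr),
\]
and the reduction above finishes the proof. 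The only delicate step that I expect to require care is the identification $\widetilde{\psi_\varepsilon(F)}=\psi_\varepsilon(\tilde F)$ q.e.\ and its promotion to a $\nu_z$-a.e.\ equality, since Sugita's formula feeds in a quasi-continuous modification rather than the raw random variable and the lemma's statement is phrased in terms of $\tilde F$; the hypothesis $\E[\delta_z(F)]>0$ plays no role in the argument beyond guaranteeing that $\nu_z$ is nontrivial.
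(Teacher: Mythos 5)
Your proof is correct and follows essentially the same route as the paper's: test the positive distribution $\delta_z(F)$ against a non-negative smooth function that vanishes near $z$, approximate $\delta_z$ by non-negative $\chi_n$ using \eqref{conti2}, and transfer the resulting vanishing to the measure $\nu_z$ via Sugita's formula. The one cosmetic difference is that the paper arranges $\mathrm{supp}\,\chi_n$ disjoint from $\mathrm{supp}\,\phi$ so that $\E[\chi_n(F)\phi(F)]=0$ identically, whereas you keep general $\chi_n$ and compute the limit through the continuous density $p_F$; both work, and you are actually a bit more explicit than the paper about why $\psi_\varepsilon(\tilde F)$ is a legitimate $\infty$-quasi-continuous modification of $\psi_\varepsilon(F)$ and why the identification holds $\nu_z$-a.e., which the paper leaves implicit.
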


\begin{proof} 
It is sufficient to show the following: For every non-negative 
$\phi \in C_{{\rm K}}^\infty (\R^n)$ whose support is away from $b$,
we have
\begin{equation}\label{eq.1008-5}
0={\mathbb E}[\delta_z (F) \phi (F)] =
 \int_{\cW} \phi(\tilde{F} (w)) \nu_{z} (dw) 
 = \int_{\R^n} \phi (\hat{z}) \, (\nu_z \circ  \tilde{F}^{-1} )(d\hat{z}).
\end{equation}
We prove this by using \eqref{conti2}.
Take an open neighborhood $O$ of $b$ which does not intersect 
the support of $\phi$.
Then, there exists 
$\{ \chi_n \}_{n\in\N} \subset C_{{\rm K}}^\infty (\R^n)$ such that
{\rm (i)}~$\chi_n$ is non-negative for all $n\in\N$, 
{\rm (ii)}~ the support of $\chi_n$ is included in $O$ for all $n\in\N$
and {\rm (iii)}~$\lim_{n\to \infty} \int f (\hat{z}) \chi_n (\hat{z})d\hat{z} = f(z)$ for every $f\in C_{{\rm b}} (\R^n)$.
Then, \eqref{eq.1008-5} immediately follows from \eqref{conti2}.
\end{proof}

\begin{remark} 
In some of the books cited in this subsection (in particular \cite{iwbk, ma, mt}),  results are formulated on  a special Gaussian space.
However, almost most all of them 
(at least, those that will be used in this paper) still hold true
 on any abstract Wiener space.
\end{remark}

\section{Quasi-sure refinement of 
canonical rough path lift of Gaussian processes}
\label{sec.QSlift}

Let $\xi= (\xi_t)_{t\in [0,T]}$ be a real-valued  
centered continuous Gaussian process.
We denote
the covariance of $\xi$ by $R (s,t) :={\mathbb E} [\xi_s \xi_t]$
$(s, t \in [0,T])$. 
The rectangular increment of $R$ is defined by
\[
\hat{R}
\begin{pmatrix}
s, \,t \\
u, \, v
\end{pmatrix}
:=
{\mathbb E} [(\xi_t -\xi_s)(\xi_v -\xi_u)]
=R(t,v) -R(s,v)  - R(t,u) +R(s,u)
\]
for $s, t, u, v \in [0,T]$.
For $\rho \ge 1$ and $[s,t]\times [s', t'] \subset [0,T]^2$,
the 2D $\rho$-variation norm of $\hat{R}$ on $[s,t]\times [s', t']$
is defined by 
\begin{equation}\label{def.2Dvar}
V_\rho (\hat{R}, [s,t]\times [s', t']) 
:=
\sup_{\pi, \pi'} \left(
\sum_{j=1}^N \sum_{k=1}^M \left|
\hat{R}
\begin{pmatrix}
t_{j-1}, \, t_j \\
u_{k-1}, \, u_k
\end{pmatrix}
\right|^\rho
\right)^{1/\rho},
\end{equation}
where  the supremum runs over all the partitions
$\pi=\{s=t_0 <t_1 <\cdots <t_N =t\}$ and 
$\pi'=\{s'=u_0 <u_1 <\cdots <u_M =t'\}$.
We say that $R$ has finite 2D H\"older-controlled $\rho$-variation 
if there exists a constant $C>0$ such that 
$
V_\rho (\hat{R}, [s,t]^2) \le C (t-s)^{1/\rho}
$
holds for all $(s, t)\in \triangle_T$.
Note that if $\rho \le \rho'$ and 
$R$ has finite 2D H\"older-controlled $\rho$-variation, 
then $R$ has finite 2D H\"older-controlled $\rho'$-variation.

Let $(w_t)_{t\in [0,T]} =(w^1_t, \ldots, w^d_t)_{t\in [0,T]}$ be 
a $d$-dimensional  
centered continuous Gaussian process with independent components.
Since we consider a rough path lift,
 we may assume $w_0 =0$, a.s. without loss of generality.
We say that the covariance of $w$
has finite 2D H\"older-controlled $\rho$-variation 
if the covariance of each component $w^i~(i \in \llbracket 1, d\rrbracket)$
has finite 2D H\"older-controlled $\rho$-variation.

We denote the law and the Cameron-Martin space of $(w_t)$ 
by $\mu$ and $\mathcal{H}$, respectively.
The former is a probability measure on $\cC_0 (\R^d)$
and the latter is a linear subspace of $\cC_0 (\R^d)$.
If we set $\mathcal{W}:=\overline{\mathcal{H}}$, where the closure is taken in $\cC_0 (\R^d)$, then $(\mathcal{W}, \mathcal{H}, \mu)$ becomes an abstract Wiener space.
In what follows, we may and will assume that $(w_t)_{t\in [0,T]}$ is 
canonically realized 
(i.e. the coordinate process) on $(\mathcal{W}, \mathcal{H}, \mu)$.

For $l\in\N$, set $\mathcal{P}_l =\{ jT/2^l \mid 
 j\in \llbracket 0, 2^l\rrbracket \}$
be the partition of $[0,T]$ with equal length $T/2^l$.
For $w \in \cC_0 (\R^d)$, $w(l)$ is the piecewise linear 
approximation of $w$ associated with $\mathcal{P}_l$, that is,
$w(l)_{jT/2^l} = w_{jT/2^l}$ for each $j\in \llbracket 0, 2^l\rrbracket$
and $w(l)$ is linearly interpolated on each subinterval
$[(j-1)T/2^l, jT/2^l]~(j\in \llbracket 1, 2^l\rrbracket)$.
For $\alpha \in (1/4, 1/2]$ and $l\in\N$,  we set
$W (l) :=S_{\lfloor 1/\alpha\rfloor} (w(l))$, which is 
an $G\Omega_{\alpha\textrm{-Hld}} (\R^d)$-valued random variable 
under $\mu$.
We set 
\[
\mathcal{Z}_{\alpha\textrm{-Hld}}
:=
\{ w \in  \mathcal{W}\mid  \mbox{$\lim_{l\to\infty} W (l)$ exists in 
$G\Omega_{\alpha\textrm{-Hld}} (\R^d)$}
\}
\]
and 
\begin{equation} \label{def.0314-1}
\mathcal{S}_{\alpha\textrm{-Hld}} (w) 
:=
\begin{cases}
 \lim_{l\to\infty} W(l) & \mbox{(if $w \in \mathcal{Z}_{\alpha\textrm{-Hld}}$),}\\
{\bf 0} &  \mbox{(otherwise),}
\end{cases}
\end{equation}
where ${\bf 0}$ stands for the zero rough path
(in some literarture, it is denoted by $\mathbf{1}$).
We view $\mathcal{S}_{\alpha\textrm{-Hld}}$ is a measurable map
from $\mathcal{W}$ to $G\Omega_{\alpha\textrm{-Hld}} (\R^d)$.
Also, for $(\alpha, m)$ satisfying \eqref{cond.0310-1},
we set $\mathcal{Z}_{\alpha, 12m\textrm{-Bes}}$ and 
$\mathcal{S}_{\alpha, 12m\textrm{-Bes}}$ in essentially the same way.
We will often write 
$W= \mathcal{S}_{\alpha\textrm{-Hld}}$ or 
$W=\mathcal{S}_{\alpha, 12m\textrm{-Bes}}$ without the parameters
of functional spaces for notational simplicity.
We have $\cC^{1\textrm{-Hld}}_0 (\R^d)\subset \mathcal{Z}_{\alpha\textrm{-Hld}}$ and 
$\mathcal{S}_{\alpha\textrm{-Hld}} (w)=S_{\lfloor 1/\alpha\rfloor} (w)$
for every $w\in \cC^{1\textrm{-Hld}}_0 (\R^d)$
since
$\lim_{l\to\infty}\| w(l) -w\|_{\delta\textrm{-Hld}} =0$ for every $\delta \in (0,1)$.
An analogous fact holds in the Besov case, too.


It is natural to ask whether $\{W (l)\}_{l\in\N}$ is convergent 
in the geometric rough path space in an appropriate  sense
or how large the subsets $\mathcal{Z}_{\alpha\textrm{-Hld}}$
and $\mathcal{Z}_{\alpha, 12m\textrm{-Bes}}$ are.
Concerning this, the following result is available
 (see \cite[Theorem 15.42]{fvbook}).
The limit
$W:= \lim_{m\to\infty} W(m)$ is called a canonical lift of
the Gaussian process $w=(w_t)$.

\begin{proposition} \label{prop.fv1542}
Assume that the covariance of $w$
has finite 2D H\"older-controlled $\rho$-variation for $\rho \in [1,2)$.
Take any $\alpha \in (1/4, 1/2]$ with 
$2\rho < 1/\alpha <\lfloor 2\rho\rfloor+1$.
Then, there exists a constant $\kappa \in (0,1)$ with
the following property:
For every $q \in [1, \infty)$, 
there exists a constant $C=C_q >0$ independent of $l, l'$ such that
\[
\| d_{\alpha\textrm{-}{\rm Hld}} (W(l), W(l')) \|_{L^q} 
\le C\kappa^{l\wedge l'},
\qquad l, l'\in\N
\]
holds. 
In particular, 
$W:= \lim_{l\to\infty} W(l)$ exists a.s. (i.e. $\mu (\mathcal{Z}_{\alpha\textrm{-}{\rm Hld}})=1$)
and in $L^q$-sense for every $q \in [1,\infty)$.
\end{proposition}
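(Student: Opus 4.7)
The plan is to reduce the proof to $L^2$-estimates on each level $W(l)^i$ ($1 \le i \le \lfloor 1/\alpha\rfloor \le 3$) and then upgrade to the rough path H\"older distance via hypercontractivity and a Kolmogorov-type criterion.

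First, since $w(l)$ depends linearly on the finite Gaussian vector $\{w_{jT/2^l}\}_j$, the iterated integral $W(l)^i_{s,t}$ lies in the $i$-th inhomogeneous Wiener chaos $\mathscr{C}^\prime_i(\R^{d^i})$, and so does the difference $W(l)^i_{s,t} - W(l')^i_{s,t}$. By the hypercontractivity estimate \eqref{eq.0312-2}, it is enough to prove the pointwise $L^2$-bound
\begin{equation}\label{eq.plan-lift}
\E\bigl[ |W(l)^i_{s,t} - W(l')^i_{s,t}|^2 \bigr]^{1/2} \le C\, \kappa^{l\wedge l'}\, (t-s)^{i\alpha'}
\end{equation}
for some $\kappa \in (0,1)$ and some $\alpha' \in (\alpha, 1/(2\rho))$, uniformly in $s\le t$ and $l,l'$; the rough-path-level Kolmogorov criterion will then convert these increment bounds into $L^q$-bounds on $\|W(l)^i - W(l')^i\|_{i\alpha\textrm{-Hld}}$.

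The core analytic task is \eqref{eq.plan-lift}. I would estimate the 2D $\rho$-variation of the rectangular increment $\hat{R}^{(l,l')}$ of the covariance of $w(l) - w(l')$ on squares $[s,t]^2$. Since each coordinate of $w(l)$ is the linear interpolation of $w$ on the dyadic grid of mesh $2^{-l}T$, one obtains two complementary bounds: a \emph{coarse} bound $V_\rho(\hat{R}^{(l,l')}, [s,t]^2) \le C(t-s)^{1/\rho}$ inherited from the H\"older-controlled $\rho$-variation of $R$, and a \emph{fine} bound $V_\rho(\hat{R}^{(l,l')}, [s,t]^2) \le C\, 2^{-(l\wedge l')\theta}$ for some $\theta>0$, reflecting the quadratic smallness of linear interpolation against a H\"older-controlled covariance. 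Interpolating these gives
\[
V_\rho(\hat{R}^{(l,l')}, [s,t]^2) \le C\, \kappa_0^{l\wedge l'}\, (t-s)^{\alpha'/\rho}
\]
with some $\kappa_0 \in (0,1)$. Feeding this into the 2D Young integration bound for iterated integrals of Gaussian paths with $i\rho < 1$ (the main estimate of \cite[Chapter 15]{fvbook}), applied to the Gaussian process $w(l) - w(l')$ paired against $w(l)$ and $w(l')$ in the telescoping identity for $W(l)^i - W(l')^i$, yields \eqref{eq.plan-lift} with $\kappa := \kappa_0^{1/2}$.

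Once \eqref{eq.plan-lift} is in hand, hypercontractivity promotes it to every $L^q$, $q\in[1,\infty)$. A Besov-H\"older Kolmogorov-type criterion (in the spirit of \eqref{ineq.0311-1}--\eqref{ineq.0311-3}) then gives
\[
\E\bigl[ \|W(l)^i - W(l')^i\|_{i\alpha\textrm{-Hld}}^q \bigr]^{1/q} \le C_q\, \kappa^{l\wedge l'},
\]
and summing over $i$ produces the claimed bound on $d_{\alpha\textrm{-Hld}}(W(l),W(l'))$. Completeness of $G\Omega_{\alpha\textrm{-Hld}}(\R^d)$ gives an a.s.\ limit $W$, and Borel-Cantelli together with the geometric rate upgrades this to convergence in every $L^q$. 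The main obstacle is the 2D Young-type estimate behind \eqref{eq.plan-lift}: one must control iterated integrals of Gaussian paths whose covariances possess only 2D $\rho$-variation, while simultaneously extracting the geometric decay factor $\kappa^{l\wedge l'}$ together with the correct H\"older scaling in $(t-s)$. Every other step is soft once this is established.
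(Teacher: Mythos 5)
The paper does not prove Proposition \ref{prop.fv1542}; it is quoted directly from Friz--Victoir, Theorem 15.42 of \cite{fvbook}. There is therefore no ``paper's proof'' to compare against, and the relevant comparison is with the argument in that reference.

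Your outline is a faithful reconstruction of the skeleton of that argument: reduce to pointwise $L^2$-estimates on each level $W(l)^i_{s,t}$ via hypercontractivity (your observation that $W(l)^i_{s,t} - W(l')^i_{s,t} \in \mathscr{C}'_i$ is correct and is exactly what makes \eqref{eq.0312-2} applicable), obtain those pointwise estimates by interpolating a coarse H\"older-controlled bound against a fine geometric-in-$l\wedge l'$ bound on the 2D $\rho$-variation of the covariance of $w(l)-w(l')$, then lift to rough-path H\"older norms by a Kolmogorov-type (Besov--H\"older) criterion. That is the strategy of the cited proof.

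The proposal is, however, a roadmap rather than a proof, and the two load-bearing estimates are asserted rather than established. First, the fine bound $V_\rho(\hat R^{(l,l')},[s,t]^2)\le C\,2^{-(l\wedge l')\theta}$ for some uniform $\theta>0$ is where the finite 2D H\"older-controlled $\rho$-variation assumption must actually be exploited; ``linear interpolation is quadratically small'' is the heuristic, but proving it (and proving that the coarse bound holds uniformly over $l,l'$ for the interpolated covariance) is the bulk of the work in \cite[Ch.\,15]{fvbook}. Second, the 2D Young estimate for Gaussian iterated integrals is cited loosely, and the stated hypothesis ``$i\rho<1$'' is not the right condition: for $i\ge 1$ and $\rho\ge 1$ it never holds; the correct hypothesis is $\rho<2$ together with the relation between $\alpha$ and $\lfloor 2\rho\rfloor$ already built into the statement of the proposition. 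You correctly flag this as ``the core analytic task,'' so as an outline your proposal is accurate, but it does not by itself close the argument.
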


Hence, under the assumptions of the above proposition, we have
\[
\bigl\| \|W^i\|_{i\alpha\textrm{-}{\rm Hld}} \bigr\|_{L^q}\vee 
\sup_{l\in \N}\bigl\| \|W(l)^i\|_{i\alpha\textrm{-}{\rm Hld}} \bigr\|_{L^q}
<\infty
\]
for every $q\in [1,\infty)$ and $i=1, \ldots, \lfloor 1/\alpha\rfloor$.
It immediately follows that if $(\alpha, m)$ satisfies
\begin{equation} \label{cond.0315-1}
\frac{1}{ \lfloor 2\rho\rfloor+1} 
<\alpha -\frac{1}{12m}
< \alpha < \frac{1}{2\rho}
\qquad\mbox{and}\qquad m \in \N,
\end{equation}
then we have 
\begin{equation} \label{ineq.0315-2}
\bigl\| \|W^i\|_{i\alpha, 12m/i\textrm{-}{\rm Bes}} \bigr\|_{L^q}\vee 
\sup_{l\in \N}
\bigl\| \|W(l)^i\|_{i\alpha, 12m/i\textrm{-}{\rm Bes}} \bigr\|_{L^q}
<\infty
\end{equation}
for every $q\in [1,\infty)$ and $i=1, \ldots, \lfloor 1/\alpha\rfloor$.

\begin{corollary} \label{cor.0315-3}
Assume that the covariance of $w$
has finite 2D H\"older-controlled $\rho$-variation for $\rho \in [1,2)$.
Let $(\alpha, m)$ satisfy \eqref{cond.0315-1}.
Then, there exists a constant $\kappa \in (0,1)$ with
the following property:
For every $q \in [1, \infty)$, 
there exists a constant $C=C_q >0$ independent of $l, l'$ such that
\[
\| d_{\alpha, 12m\textrm{-}{\rm Bes}} (W(l), W(l')) \|_{L^q} 
\le C\kappa^{l\wedge l'},
\qquad l, l'\in\N
\]
holds. 
In particular, 
$W:= \lim_{l\to\infty} W(l)$ exists a.s. 
(i.e. $\mu (\mathcal{Z}_{\alpha, 12m\textrm{-}{\rm Bes}})=1$)
and in $L^q$-sense for every $q \in [1,\infty)$.
\end{corollary}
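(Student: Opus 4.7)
The plan is to deduce the Besov bound from the H\"older bound in Proposition \ref{prop.fv1542} by exploiting the strict inequality $\alpha < 1/(2\rho)$ built into \eqref{cond.0315-1}. Concretely, first I would fix an auxiliary exponent $\alpha'$ with $\alpha < \alpha' < 1/(2\rho)$, which is possible by \eqref{cond.0315-1}. Since $\rho \in [1,2)$ forces $\lfloor 2\rho\rfloor \le 3$, this $\alpha'$ lies in $(1/4, 1/2]$ and, because $1/(\lfloor 2\rho\rfloor+1) < \alpha' < 1/(2\rho)$, one has $\lfloor 1/\alpha'\rfloor = \lfloor 2\rho\rfloor = \lfloor 1/\alpha\rfloor =: k$, so the signature truncation levels agree. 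Proposition \ref{prop.fv1542} applied with this $\alpha'$ then produces $\kappa \in (0,1)$ (depending only on $\alpha'$ and on the Gaussian process) and constants $C_q$ such that
\[
\|d_{\alpha'\textrm{-Hld}}(W(l), W(l'))\|_{L^q} \le C_q \kappa^{l \wedge l'}, \qquad l, l' \in \N.
\]

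Next I would pass from H\"older to Besov levelwise. For each $i \in \llbracket 1, k\rrbracket$ and any continuous $\eta \colon \triangle_T \to (\R^d)^{\otimes i}$, substituting the trivial pointwise bound $|\eta_{s,t}| \le \|\eta\|_{i\alpha'\textrm{-Hld}}\,(t-s)^{i\alpha'}$ into the definition of the Besov seminorm gives
\[
\|\eta\|_{i\alpha, 12m/i\textrm{-Bes}}^{12m/i}
\le \|\eta\|_{i\alpha'\textrm{-Hld}}^{12m/i} \iint_{\triangle_T} (t-s)^{12m(\alpha'-\alpha) - 1}\, ds\, dt,
\]
and the remaining integral is finite because $\alpha' > \alpha$; this is exactly the Besov-by-H\"older comparison noted just after the norm definitions in the Notation section. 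Applying this with $\eta = W(l)^i - W(l')^i$, summing over $i \in \llbracket 1, k\rrbracket$, and taking $L^q$-norms then yields
\[
\|d_{\alpha, 12m\textrm{-Bes}}(W(l), W(l'))\|_{L^q}
\le C \|d_{\alpha'\textrm{-Hld}}(W(l), W(l'))\|_{L^q}
\le C C_q \kappa^{l\wedge l'},
\]
which is the main quantitative assertion of the corollary.

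For the ``in particular'' clause, the geometric $L^q$-Cauchy property gives an $L^q$-limit $W := \lim_{l\to\infty} W(l)$ in the complete metric space $G\Omega_{\alpha, 12m\textrm{-Bes}}(\R^d)$, and a standard Borel-Cantelli argument applied via Markov's inequality to the events $\{d_{\alpha, 12m\textrm{-Bes}}(W(l), W(l+1)) > \kappa^{l/2}\}$ upgrades this to almost sure convergence, so $\mu(\mathcal{Z}_{\alpha, 12m\textrm{-Bes}}) = 1$. The only real point of care is the initial bookkeeping in choosing $\alpha'$ — verifying that the slack $1/(12m)$ in \eqref{cond.0315-1} genuinely allows an auxiliary $\alpha'$ with matching truncation level to be inserted between $\alpha$ and $1/(2\rho)$; beyond that, no new probabilistic input is required and the corollary reduces to Proposition \ref{prop.fv1542} together with a Fubini computation.
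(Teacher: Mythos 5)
Your proof is correct and fills in exactly the argument the paper leaves implicit (the paper's proof is the one-liner ``immediate from Proposition~\ref{prop.fv1542}''): both proceed by choosing an auxiliary $\alpha'\in(\alpha,1/(2\rho))$, applying the H\"older-norm $L^q$-Cauchy estimate of Proposition~\ref{prop.fv1542} at exponent $\alpha'$, and then transferring to the $(\alpha,12m)$-Besov distance via the elementary comparison $\|\eta\|_{i\alpha,12m/i\textrm{-Bes}}\le C\|\eta\|_{i\alpha'\textrm{-Hld}}$ valid for $\alpha'>\alpha$. One minor remark: the room needed to insert $\alpha'$ comes solely from the strict inequality $\alpha<1/(2\rho)$ in \eqref{cond.0315-1}, not from the $1/(12m)$ slack on the left side, which is instead what guarantees the Besov-to-H\"older embedding \eqref{ineq.0311-4} used elsewhere.
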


\begin{proof} 
This inequality is immediate from Proposition \ref{prop.fv1542}.
\end{proof}


In the following theorem and its corollary,
we provide main results of this section.
They claim that the canonical lift map for Gaussian processes 
admits a quasi-sure refinement and, moreover,
 is $\infty$-quasi-continuous.
The $\infty$-quasi-continuity will play an indispensable role 
when we define fractional diffusion bridges in the next section.

\begin{theorem} \label{thm.0321}
Assume that the covariance of $w$
has finite 2D H\"older-controlled $\rho$-variation for $\rho \in [1,2)$.
Let $(\alpha, m)$ satisfy \eqref{cond.0315-1}.
Then, the complement of 
$\mathcal{Z}_{\alpha, 12m\textrm{-}{\rm Bes}}$ is a slim set.
Moreover, 
$\mathcal{S}_{\alpha, 12m\textrm{-}{\rm Bes}}\colon \mathcal{W} \to 
G\Omega_{\alpha, 12m\textrm{-}{\rm Bes}} (\R^d)$ is $\infty$-quasi-continuous.
\end{theorem}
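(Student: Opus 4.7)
The plan is to upgrade the $L^q(\mu)$-convergence in Corollary \ref{cor.0315-3} to convergence in every $(p,k)$-capacity, and then read off both assertions. The starting observation is a finite-chaos argument: since $w(l)$ is piecewise linear and determined by the finitely many evaluations $(w_{jT/2^l})_{j=0}^{2^l}$, each coordinate of $W(l)^i_{s,t}$ is a polynomial of degree exactly $i$ in jointly Gaussian variables, so both $W(l)^i_{s,t}$ and the difference $\Delta_{s,t}:= W(l)^i_{s,t}-W(l')^i_{s,t}$ lie in the inhomogeneous Wiener chaos $\mathscr{C}'_i$. Under \eqref{cond.0310-1} we have $\lfloor 1/\alpha\rfloor\in\{2,3\}$, so $i\in\llbracket 1,3\rrbracket$ and the integer $12m/i\in\{12m,6m,4m\}$ is always even. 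Hence $|\Delta_{s,t}|^{12m/i}=\Delta_{s,t}^{12m/i}\in\mathscr{C}'_{12m}$, and integrating over $\triangle_T$ against the measure $(t-s)^{-1-12m\alpha}\,dsdt$ (a bounded linear operation into $L^2(\mu)$ that preserves closed subspaces) gives
\[
F_i(l,l') := \|W(l)^i - W(l')^i\|_{i\alpha, 12m/i\textrm{-Bes}}^{12m/i} \in \mathscr{C}'_{12m}.
\]

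By property \textbf{(f)} of Section \ref{sec.RPMal}, every $\mathbf{D}_{p,k}$-norm is equivalent to the $L^2(\mu)$-norm on the fixed chaos $\mathscr{C}'_{12m}$, so combining with Corollary \ref{cor.0315-3} one obtains, for all $p,k$,
\[
\|F_i(l,l')\|_{\mathbf{D}_{p,k}} \le C\|F_i(l,l')\|_{L^2} \le C'\kappa_2^{l\wedge l'}
\]
with $\kappa_2:=\kappa^{12m/i}\in(0,1)$. Applying the standard Chebyshev-type capacity inequality $\mathrm{Cap}_{p,k}(\{F>\lambda\})\le \lambda^{-p}\|F\|_{\mathbf{D}_{p,k}}^p$ (for nonnegative $F$) to $F=F_i(l,l+1)$ with $\lambda=\epsilon_l^{12m/i}$ for a suitable geometric choice of $\epsilon_l\downarrow 0$ gives a summable-in-$l$ bound on $\mathrm{Cap}_{p,k}(\{\|W(l)^i-W(l+1)^i\|_{i\alpha,12m/i\textrm{-Bes}}>\epsilon_l\})$. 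Countable subadditivity of capacity (a Borel--Cantelli argument in capacity) then shows that, outside a set of $(p,k)$-capacity zero, $\{W(l)\}_{l\in\N}$ is Cauchy in the $(\alpha,12m)$-Besov topology; as $p,k$ are arbitrary, $\mathcal{Z}_{\alpha,12m\textrm{-Bes}}^c$ is slim.

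For the $\infty$-quasi-continuity of $\mathcal{S}_{\alpha,12m\textrm{-Bes}}$, each $W(l)$ is genuinely continuous as a map $\mathcal{W}\to G\Omega_{\alpha,12m\textrm{-Bes}}(\R^d)$, since it factors through the finite-dimensional evaluation $w\mapsto(w_{jT/2^l})_j$, followed by polygonal interpolation and the signature functional. The Borel--Cantelli step above actually upgrades to uniform convergence of $W(l)\to\mathcal{S}_{\alpha,12m\textrm{-Bes}}$ outside an open set of arbitrarily small $(p,k)$-capacity (the quasi-sure analogue of Egorov's theorem). A uniform limit of continuous maps is continuous, so $\mathcal{S}_{\alpha,12m\textrm{-Bes}}$ is $(p,k)$-quasi-continuous for every $(p,k)$, hence $\infty$-quasi-continuous. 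The main obstacle is the chaos identification: verifying that $F_i(l,l')$ lies in the finite inhomogeneous chaos $\mathscr{C}'_{12m}$, which crucially uses the evenness of $12m/i$ so that the absolute value inside the Besov integral drops and hypercontractive norm equivalence becomes available; everything downstream is standard quasi-sure analysis.
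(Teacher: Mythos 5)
Your proof is correct and follows essentially the same route as the paper: identify $\|W(l+1)^i-W(l)^i\|_{i\alpha,12m/i\textrm{-Bes}}^{12m/i}$ as an element of a fixed inhomogeneous chaos (you correctly get $\mathscr{C}'_{12m}$, whereas the paper's text has a typo writing $\mathscr{C}'_{12m/i}$), invoke the $\mathbf{D}_{p,k}\leftrightarrow L^2$ norm equivalence of property \textbf{(f)} together with Corollary \ref{cor.0315-3} to get geometric decay of $\mathbf{D}_{p,k}$-norms, then run Chebyshev plus Borel--Cantelli in capacity to conclude slimness, and a quasi-sure Egorov argument (uniform convergence of the continuous maps $W(l)$ off an open set of small capacity) for $\infty$-quasi-continuity. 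The only cosmetic deviations are your geometric threshold $\epsilon_l$ in place of the paper's $l^{-2}$ and a different (but equally valid) normalization of the Chebyshev inequality for capacities; you might also state explicitly that the exceptional sets $\{F_i(l,l+1)>\lambda\}$ are open (which follows from the continuity of $w\mapsto W(l)$ that you already noted), since that is what the definition of quasi-continuity requires.
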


\begin{proof} 
In this proof $(\alpha, m)$ is basically fixed.
For $l\in\N$ and $i=1, \ldots, \lfloor 1/\alpha\rfloor$, we set 
\[
\mathcal{A}^i_{l} := \left\{w \in \mathcal{W}
\,\middle| \,
\|W(l+1)^i -W(l)^i\|_{i\alpha, 12m/i\textrm{-}{\rm Bes}}^{12m/i} 
> (l^{-2})^{12 m/i }
\right\}.
\]
For every $l \in \N$, 
$\|W(l+1)^i -W(l)^i\|_{i\alpha, 12m/i\textrm{-}{\rm Bes}}^{12m/i}  
\in \mathscr{C}^\prime_{12m/i}$
and 
$w \mapsto W(l)=S_{\lfloor 1/\alpha\rfloor} (w(l))$ is continuous
from $\mathcal{W}$ to 
$G\Omega_{\alpha, 12m\textrm{-}{\rm Bes}} (\R^d)$.
Hence, $\mathcal{A}^i_{l}$ is open in $\cW$.

Pick $q \in (1, \infty)$ and $k \in \N_0$ arbitrarily. 
By the Chebyshev-type inequality for the capacities 
(see \cite[P. 90, Lemma 1.2.5]{ma}), \eqref{eq.0312-2} and Corollary \ref{cor.0315-3}, we have
\begin{align}  
\mathrm{Cap}_{q,k} (\mathcal{A}^i_{l}) 
&\le
M_{q,k} l^{24 m/i } 
\left\| 
\|W(l+1)^i -W(l)^i\|_{i\alpha, 12m/i\textrm{-}{\rm Bes}}^{12m/i} 
\right\|_{\mathbf{D}_{q,k}}
\nn\\
&\le
M'_{q,k} l^{24 m/i } 
\left\| 
\|W(l+1)^i -W(l)^i\|_{i\alpha, 12m/i\textrm{-}{\rm Bes}}^{12m/i} 
\right\|_{L^2}
\nn\\
&\le
M'_{q,k} C_2  l^{24 m/i }  \kappa^l.
\label{ineq.0321-1}
\end{align}
Here, $M_{q,k}$ and $M'_{q,k}$ are positive constants
depending only on $(q, k)$ and
$C_2>0$ and $\kappa \in (0,1)$ are constants in Corollary \ref{cor.0315-3}.
None of these four constants depend on $l$, which 
implies that 
\[
\sum_{l\in\N}\mathrm{Cap}_{q,k} (\mathcal{A}^i_{l}) <\infty.
\]
Using the Borel-Cantelli lemma for the capacities (see \cite[P. 90, Corollary 1.2.4]{ma}), 
we can  see from the above fact that
\[
\mathrm{Cap}_{q,k}\left(
\limsup_{l\to\infty}\mathcal{A}^i_{l}
\right) =0.
\]

If $w \in \cap_{i=1, \ldots, \lfloor 1/\alpha\rfloor} (\limsup_{l\to\infty}\mathcal{A}^i_{l})^c$,
there exists $l_0 (w) \in \N$ such that
\[
\|W(l+1)^i -W(l)^i\|_{i\alpha, 12m/i\textrm{-}{\rm Bes}} 
\le l^{-2}
\quad
\mbox{for all $l \ge l_0 (w)$ and $i=1, \ldots, \lfloor 1/\alpha\rfloor$,}
\]
from which it immediately follows that
\[
\sum_{l\in\N}\|W(l+1)^i -W(l)^i\|_{i\alpha, 12m/i\textrm{-}{\rm Bes}} <\infty
\quad
\mbox{for all  $i=1, \ldots, \lfloor 1/\alpha\rfloor$.}
\]
Note that 
the above condition implies that $\{W(l)\}_{l\in\N}$ is Cauchy in 
$G\Omega_{\alpha, 12m\textrm{-}{\rm Bes}} (\R^d)$.
Hence, 
$\cap_{i=1, \ldots, \lfloor 1/\alpha\rfloor} (\limsup_{l\to\infty}\mathcal{A}^i_{l})^c \subset \mathcal{Z}_{\alpha, 12m\textrm{-}{\rm Bes}}$ 
and 
$\mathrm{Cap}_{q,k} ((\mathcal{Z}_{\alpha, 12m\textrm{-}{\rm Bes}})^c) =0$.
Since $(q, k)$ is arbitrary, $\mathcal{Z}_{\alpha, 12m\textrm{-}{\rm Bes}}$
is slim.

Next we show the $(q, k)$-quasi-continuity of 
$\mathcal{S}_{\alpha, 12m\textrm{-}{\rm Bes}}$.
Let $\ve>0$ be given.
We can easily see from \eqref{ineq.0321-1} that
there exists $\ell (\ve) \in\N$ such that
\[
\mathrm{Cap}_{q,k} \left( \cup_{i=1, \ldots, \lfloor 1/\alpha\rfloor} \cup_{l= \ell (\ve)}^\infty \mathcal{A}^i_{l} 
\right)
\le 
\sum_{i=1, \ldots, \lfloor 1/\alpha\rfloor}
\sum_{l= \ell (\ve)}^\infty
\mathrm{Cap}_{q,k} (\mathcal{A}^i_{l}) <\ve.
\]
On the complement of the open subset 
$\cup_{i=1, \ldots, \lfloor 1/\alpha\rfloor} \cup_{l= \ell (\ve)}^\infty \mathcal{A}^i_{l}$,  we have
\[
\|W(l+1)^i -W(l)^i\|_{i\alpha, 12m/i\textrm{-}{\rm Bes}} 
\le l^{-2}
\quad
\mbox{for all $l \ge \ell (\ve)$ and $i=1, \ldots, \lfloor 1/\alpha\rfloor$.}
\]
This implies that the sequence $\{W(l)\}_{l\in\N}$ of continuous maps
from $\cW$ to $G\Omega_{\alpha, 12m\textrm{-}{\rm Bes}} (\R^d)$
uniformly converges on the complement.
Therefore, $\mathcal{S}_{\alpha, 12m\textrm{-}{\rm Bes}}$ is 
also continuous restricted to the complement.
Thus, we have shown the $(q, k)$-quasi-continuity for every $(q, k)$.
This completes the proof of Theorem \ref{thm.0321}.
\end{proof}

\begin{corollary} \label{cor.0321}
Assume that the covariance of $w$
has finite 2D H\"older-controlled $\rho$-variation for $\rho \in [1,2)$.
Take any $\alpha \in (1/4, 1/2]$ with 
$2\rho < 1/\alpha <\lfloor 2\rho\rfloor+1$.
Then, the complement of 
$\mathcal{Z}_{\alpha\textrm{-}{\rm Hld}}$ is a slim set.
Moreover, 
$\mathcal{S}_{\alpha\textrm{-}{\rm Hld}}\colon \mathcal{W} \to 
G\Omega_{\alpha\textrm{-}{\rm Hld}} (\R^d)$ is $\infty$-quasi-continuous.
\end{corollary}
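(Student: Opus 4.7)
The plan is to deduce the corollary from Theorem \ref{thm.0321} by means of the continuous embedding $G\Omega_{\alpha', 12m\textrm{-Bes}}(\R^d) \hookrightarrow G\Omega_{\alpha' - 1/(12m)\textrm{-Hld}}(\R^d)$ supplied by \eqref{ineq.0311-4}, rather than attempting to mimic the proof of Theorem \ref{thm.0321} directly in the H\"older topology. The main obstacle to a direct argument is that the $\alpha$-H\"older norm is a supremum, so (unlike the Besov norm to the $12m/i$-th power) it does not belong to any finite inhomogeneous Wiener chaos $\mathscr{C}'_{12m/i}$. This rules out the equivalence of Sobolev and $L^2$-norms on chaoses (property \textbf{(f)}), which was the mechanism behind the capacity estimate \eqref{ineq.0321-1}; going through the Besov space is the natural way to bypass it.

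First I would fix the parameters. The hypothesis gives $\alpha < 1/(2\rho)$ and $\alpha > 1/(\lfloor 2\rho\rfloor + 1)$, so I would pick $m \in \N$ large enough that $\alpha^* := \alpha + 1/(12m)$ still satisfies $\alpha^* < 1/(2\rho)$ and $\alpha^* \in (1/4, 1/2]$. It is then automatic that $\lfloor 1/\alpha^*\rfloor = \lfloor 2\rho\rfloor = \lfloor 1/\alpha\rfloor$, so the same truncation level is used in both rough path spaces, and the pair $(\alpha^*, m)$ satisfies \eqref{cond.0315-1} because $\alpha^* - 1/(12m) = \alpha > 1/(\lfloor 2\rho\rfloor + 1)$. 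Applying Theorem \ref{thm.0321} to $(\alpha^*, m)$ then yields that the complement of $\mathcal{Z}_{\alpha^*, 12m\textrm{-Bes}}$ is slim and that $\mathcal{S}_{\alpha^*, 12m\textrm{-Bes}} \colon \mathcal{W} \to G\Omega_{\alpha^*, 12m\textrm{-Bes}}(\R^d)$ is $\infty$-quasi-continuous.

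Finally, the right half of \eqref{ineq.0311-4} (with $\beta = \alpha$) provides a locally Lipschitz continuous inclusion $\iota \colon G\Omega_{\alpha^*, 12m\textrm{-Bes}}(\R^d) \hookrightarrow G\Omega_{\alpha\textrm{-Hld}}(\R^d)$. Since both spaces are built from the very same piecewise linear approximations $W(l) = S_{\lfloor 1/\alpha\rfloor}(w(l))$, Besov convergence of $\{W(l)\}_{l\in\N}$ forces H\"older convergence with the same limit $\iota \circ \mathcal{S}_{\alpha^*, 12m\textrm{-Bes}}(w)$. Consequently $\mathcal{Z}_{\alpha^*, 12m\textrm{-Bes}} \subset \mathcal{Z}_{\alpha\textrm{-Hld}}$, so $\mathcal{Z}_{\alpha\textrm{-Hld}}^c$ is slim as a subset of a slim set, and $\mathcal{S}_{\alpha\textrm{-Hld}} = \iota \circ \mathcal{S}_{\alpha^*, 12m\textrm{-Bes}}$ off a slim set. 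As the composition of an $\infty$-quasi-continuous map with a continuous map is $\infty$-quasi-continuous, and modification on a slim set preserves $\infty$-quasi-continuity, the desired $\infty$-quasi-continuity of $\mathcal{S}_{\alpha\textrm{-Hld}}$ follows.
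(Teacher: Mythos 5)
Your proof is correct and follows essentially the same route as the paper: both choose auxiliary Besov parameters $(\alpha', m)$ satisfying \eqref{cond.0315-1} with $\alpha' - (12m)^{-1} \ge \alpha$, invoke Theorem \ref{thm.0321}, and then push the conclusion through the Besov-to-H\"older embedding \eqref{ineq.0311-4}. You spell out the parameter bookkeeping and the inclusion $\mathcal{Z}_{\alpha^*, 12m\textrm{-Bes}} \subset \mathcal{Z}_{\alpha\textrm{-Hld}}$ in more detail, and your opening remark explaining why the direct H\"older argument breaks down (the sup-norm does not sit in a finite Wiener chaos, so property \textbf{(f)} is unavailable) is a helpful observation the paper leaves implicit, but the underlying argument is identical.
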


\begin{proof} 
We can find $(\alpha', m)$ satisfying both \eqref{cond.0315-1}
and $\alpha' - (12m)^{-1}\ge \alpha$.
With the Besov-H\"older embedding \eqref{ineq.0311-4} at hand, 
we just need to use Theorem \ref{thm.0321} for $(\alpha', m)$.
\end{proof}

Under the assumptions of Corollary \ref{cor.0321},
$F \circ \mathcal{S}_{\alpha\textrm{-}{\rm Hld}}= F(W)$ is 
also $\infty$-quasi-continuous on $\mathcal{W}$
if $F$ is a continuous map from $G\Omega_{\alpha\textrm{-}{\rm Hld}} (\R^d)$ to another topological space.
A prominent example of such an $F$ is the Lyons-It\^o map 
associated with an RDE.

Let $V_j~(j \in \llbracket 0, d\rrbracket)$ are vector fields on $\R^e$
(which are here viewed as $\R^e$-valued functions on $\R^e$) and 
consider the following controlled ODE: 
\begin{equation}\label{rde1}
dy_t = \sum_{j=1}^d V_j (y_t) dx^j_t  + V_0 (y_t) dt,
\qquad 
y_0 =a
\end{equation}
The initial value $a\in\R^e$ is arbitrary, but fixed.
The super-index $j$ on $dx^j$ stands for the coordinate of $\R^d$,
not the level of rough path.
Here, we understand \eqref{rde1} as an RDE,
namely, a controlled ODE controlled by a geometric rough path $X$.
If a unique global solution $y$ exists for $X$, we write $y:= \Phi_a (X)$.
(Since we only consider the first level of a solution, 
any formulation of RDEs will do.)

If $V_j~(j\in \llbracket 1, d\rrbracket)$ are of ${\rm Lip}^{\gamma}$
and 
$V_0$ is of ${\rm Lip}^{1+\delta}$ for some $\gamma >\alpha^{-1}$
and $\delta >0$,
 then RDE  \eqref{rde1} has a unique global solution 
 $y =\Phi_a (X)$ exists for every 
 $X\in G\Omega_{\alpha\textrm{-}{\rm Hld}} (\R^d)$ and, moreover,
 the Lyons-It\^o map 
$\Phi_a \colon G\Omega_{\alpha\textrm{-}{\rm Hld}} (\R^d)\to 
\tilde\cC^{\alpha\textrm{-Hld}}_a (\R^e)$ is continuous
(see \cite[Theorem 12.10 and Remark 12.7 (i)]{fvbook} for example.)

 Summarizing the above argument, we have the following corollary.
Note that $\Phi_a  \circ \mathcal{S}_{\alpha\textrm{-}{\rm Hld}}=\Phi_a  (W)$
is the solution of  \eqref{rde1} when the driving rough path $X$ is 
replaced by the Gaussian rough path 
$W =  \mathcal{S}_{\alpha\textrm{-}{\rm Hld}} (w)$.

\begin{corollary} \label{cor.0904}
Assume that the covariance of $w$
has finite 2D H\"older-controlled $\rho$-variation for $\rho \in [1,2)$.
Assume further that 
$V_j~(j \in \llbracket 1, d\rrbracket)$ are of ${\rm Lip}^{\gamma}$
for some $\gamma > 2\rho$
and $V_0$ is of ${\rm Lip}^{1+\delta}$ for some $\delta >0$.
Then, for every $\alpha \in (1/4, 1/2]$ with 
$2\rho < 1/\alpha < \gamma \wedge (\lfloor 2\rho\rfloor+1)$,
the correspondence 
$w \mapsto \Phi_a \circ \mathcal{S}_{\alpha\textrm{-}{\rm Hld}}=\Phi (W)$
is an $\infty$-quasi-continuous map 
from $\mathcal{W}$ to 
$\tilde\cC^{\alpha\textrm{-Hld}}_a (\R^e)$.
\end{corollary}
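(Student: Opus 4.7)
The plan is to combine the two ingredients already in hand: the $\infty$-quasi-continuity of the Gaussian lift map from Corollary \ref{cor.0321}, and the continuity of the Lyons-It\^o map as stated in the paragraph preceding the corollary. The result then follows by the general fact that the composition of an $\infty$-quasi-continuous map with a continuous map into a metric space target is again $\infty$-quasi-continuous.

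First, I would verify that the parameter choice $\alpha \in (1/4,1/2]$ with $2\rho < 1/\alpha < \gamma \wedge (\lfloor 2\rho\rfloor+1)$ simultaneously satisfies the two separate sets of hypotheses: the inequality $2\rho < 1/\alpha < \lfloor 2\rho\rfloor + 1$ is exactly the hypothesis of Corollary \ref{cor.0321}, so $\mathcal{S}_{\alpha\text{-}\mathrm{Hld}}\colon \mathcal{W}\to G\Omega_{\alpha\text{-}\mathrm{Hld}}(\R^d)$ is $\infty$-quasi-continuous; and the inequality $\gamma > 1/\alpha$ together with the assumption $V_0 \in \mathrm{Lip}^{1+\delta}$ is precisely what is required so that the Lyons-It\^o map $\Phi_a \colon G\Omega_{\alpha\text{-}\mathrm{Hld}}(\R^d) \to \tilde{\mathcal{C}}^{\alpha\text{-Hld}}_{a}(\R^e)$ is well-defined and continuous (cf.\ \cite[Thm.~12.10]{fvbook}, which is cited above the corollary).

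Next, I would execute the composition argument. Fix $q\in(1,\infty)$ and $k\in\N$ and let $\varepsilon>0$. By the $(q,k)$-quasi-continuity of $\mathcal{S}_{\alpha\text{-}\mathrm{Hld}}$ (obtained in the proof of Theorem \ref{thm.0321}), there exists an open subset $O_\varepsilon\subset\mathcal{W}$ with $\mathrm{Cap}_{q,k}(O_\varepsilon)<\varepsilon$ such that the restriction
\[
\mathcal{S}_{\alpha\text{-}\mathrm{Hld}}\big|_{\mathcal{W}\setminus O_\varepsilon}\colon \mathcal{W}\setminus O_\varepsilon \longrightarrow G\Omega_{\alpha\text{-}\mathrm{Hld}}(\R^d)
\]
is continuous. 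Since $\Phi_a$ is continuous on all of $G\Omega_{\alpha\text{-}\mathrm{Hld}}(\R^d)$ under our assumptions on $V_j$, the composition $\Phi_a\circ \mathcal{S}_{\alpha\text{-}\mathrm{Hld}}$ is continuous on $\mathcal{W}\setminus O_\varepsilon$. As $\varepsilon>0$ and $(q,k)$ are arbitrary, this yields the claimed $\infty$-quasi-continuity from $\mathcal{W}$ to $\tilde{\mathcal{C}}^{\alpha\text{-Hld}}_a(\R^e)$.

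There is no serious obstacle here: the corollary is essentially a packaging statement. The only small point to be careful about is the choice of target topology: $\Phi_a$ is known to take values in $\tilde{\mathcal{C}}^{\alpha\text{-Hld}}_a(\R^e)$ (the little $\alpha$-H\"older space, which is separable), and continuity holds in that topology, so the composition is indeed $\infty$-quasi-continuous as a map into this separable Banach space. This separability matters only insofar as it ensures that the notion of $\infty$-quasi-continuity behaves correctly for the target, which is automatic here.
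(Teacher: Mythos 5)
Your argument is correct and coincides with the paper's own reasoning: the paper states the corollary as an immediate consequence of the general fact that post-composing an $\infty$-quasi-continuous map with a continuous map preserves $\infty$-quasi-continuity, together with Corollary \ref{cor.0321} for the lift map and the continuity of the Lyons-It\^o map under the stated ${\rm Lip}^\gamma$ hypotheses. Your spelled-out $(q,k)$-capacity argument and the check that the double inequality on $1/\alpha$ covers both sets of hypotheses are exactly what the paper's ``summarizing the above argument'' compresses.
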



Now we provide an example of Gaussian process that 
satisfies the assumptions of Theorem \ref{thm.0321}
and Corollary \ref{cor.0321}.
Let $d\in\N$ and $\vec{H} =(H_1,\ldots, H_d) \in (0,1)^d$.
We set $\underbar{H}= \min_{1\le j\le d}H_j$. 
A $d$-dimensional continuous stochastic process 
$(w_t)_{t\in [0,T]} =(w^1_t, \ldots, w^d_t)_{t\in [0,T]}$ are 
called a mixed FBM with 
 Hurst parameter $\vec{H}$ if 
all components are independent one-dimensional
centered Gaussian processes whose covariances are given by 
\[
\mathbb{E} [w^j_s w^j_t] =\frac12 (s^{2H_j} + t^{2H_j}  - |t-s|^{2H_j} ),
\qquad
 s, t \in [0,T], \, j \in \llbracket 1, d\rrbracket.
\]
In other words, $w$ is a collection of 
 independent one-dimensional FBM's with the Hurst parameter $H_j$. 
The law of this process is denoted by $\mu =\mu^{\vec{H}}$, 
which is a centered Gaussian measure on $\cC_0 (\R^d)$.
The associated Cameron-Martin space is 
denoted by $\mathcal{H}^{\vec{H}}$. The triplet 
$(\cC_0 (\R^d), \mathcal{H}^{\vec{H}}, \mu^{\vec{H}})$ is an abstract Wiener space.
Obviously, the coordinate process on $\cC_0 (\R^d)$
is a standard realization of the mixed FBM with 
the Hurst parameter $\vec{H}$.

\begin{proposition} \label{prop.0322-1}
Let $w =(w_t)_{t\in [0,T]}$ be a mixed FBM with the Hurst parameter $\vec{H}$ as above and assume that $\underbar{H}\in (1/4, 1)$.
Then,  the covariance of $w$
has finite 2D H\"older-controlled $(2\underbar{H})^{-1} \vee 1$-variation.
\end{proposition}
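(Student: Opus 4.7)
The plan is to reduce to a per-coordinate statement and then invoke the well-known 2D variation estimate for the covariance of one-dimensional FBM.

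First, by the definition recalled at the beginning of this section, the property ``the covariance of $w$ has finite 2D H\"older-controlled $\rho$-variation'' is componentwise, so the independence of $w^1,\ldots,w^d$ reduces matters to verifying the claim for each one-dimensional covariance $R_j(s,t) := \mathbb{E}[w_s^j w_t^j]$ separately. Moreover, since $H_j \ge \underbar{H}$ one has $(2H_j)^{-1}\vee 1 \le (2\underbar{H})^{-1}\vee 1$, and the monotonicity observation recalled earlier (finite 2D H\"older-controlled $\rho$-variation is inherited by any larger exponent) shows that it suffices to prove, for each $j$, that $R_j$ has finite 2D H\"older-controlled $\rho_j$-variation with $\rho_j := (2H_j)^{-1}\vee 1$.

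Second, by direct expansion of $R_j(t,v)-R_j(s,v)-R_j(t,u)+R_j(s,u)$ the rectangular increment takes the explicit form
\[
\hat{R}_j\!\begin{pmatrix} s, t \\ u, v \end{pmatrix}
= \tfrac12\bigl( |t-u|^{2H_j} + |s-v|^{2H_j} - |t-v|^{2H_j} - |s-u|^{2H_j}\bigr).
\]
When $H_j \ge 1/2$ we have $\rho_j = 1$, and either by writing $R_j$ as a double integral against its mixed distributional derivative or by estimating the four-term expression directly, the sum of absolute values of rectangular increments across any pair of partitions of $[s,t]$ is bounded by $C(t-s)$, giving $V_1(\hat R_j,[s,t]^2)\le C(t-s)$. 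When $H_j \in (1/4, 1/2)$ we have $\rho_j = (2H_j)^{-1}\in (1,2)$, and the desired bound $V_{\rho_j}(\hat R_j,[s,t]^2) \le C(t-s)^{2H_j}$ is obtained by the standard partition estimate: off-diagonal rectangles decay like a product of two $H_j$-H\"older increments of $|\cdot|^{2H_j}$, while contributions from rectangles near or on the diagonal are absorbed using the $(2H_j)$-H\"older regularity of $t \mapsto R_j(t,t)$.

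The main technical obstacle is the combinatorial bookkeeping in the $H_j < 1/2$ case, where the sign cancellations in the rectangular increment make a naive triangle inequality wasteful. However, this is precisely the estimate proved in detail in the Friz-Victoir framework (see \cite[Chapter 15]{fvbook}, where one-dimensional FBM with Hurst parameter $H \in (0,1/2]$ is shown to have covariance of finite 2D H\"older-controlled $(2H)^{-1}$-variation), so I would cite that reference rather than reproduce the computation. Combining the two cases with the two reductions from the first paragraph completes the proof.
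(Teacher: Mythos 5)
Your overall strategy matches the paper's: reduce to one dimension, invoke Friz--Victoir for $H_j\le 1/2$ (the paper cites \cite[Example~1]{fv_2Dvar}, you cite \cite[Chapter~15]{fvbook}; these are the same body of results), and handle $H_j>1/2$ directly. Your explicit appeal to monotonicity in $\rho$ to pass from $(2H_j)^{-1}\vee 1$ to $(2\underbar{H})^{-1}\vee 1$ is a small but genuine clarification that the paper leaves implicit.

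The gap is in the case $H_j>1/2$, which you dispatch in one vague sentence offering two alternatives. The ``estimating the four-term expression directly'' option does not work as stated: $V_1$ is a supremum of sums of absolute values of rectangular increments over \emph{all pairs of partitions}, and a naive triangle-inequality estimate of $\sum_{j,k}|\hat R(\ldots)|$ blows up as the mesh shrinks. The paper's proof overcomes exactly this by (i) using subadditivity of the $\rho=1$ variation sum to reduce to a common partition $\pi=\pi'$, (ii) using the \emph{positive correlation} of FBM increments for $H>1/2$ to replace $\sum_k|\cdot|$ by $|\sum_k\cdot|$ and telescope the inner sum, and (iii) applying the mean value theorem to get $|\mathbb{E}[w^1_{u,v}w^1_{s,t}]|\le 4HT^{2H-1}(v-u)$. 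Without the positivity input, step (ii) fails and there is no route to the required bound. Your other suggested route --- writing $R_j$ as a double integral of $\partial_s\partial_t R_j = H_j(2H_j-1)|t-s|^{2H_j-2}$ --- does work and is in fact cleaner than the paper's argument: since this density is nonnegative for $H_j>1/2$, every rectangular increment is nonnegative, so $V_1(\hat R_j,[s,t]^2)=\hat R_j\begin{pmatrix}s,t\\s,t\end{pmatrix}=(t-s)^{2H_j}\le T^{2H_j-1}(t-s)$. But you never state the positivity observation that makes this work, and as written the step is asserted rather than proved; it needs to be filled in, either via the paper's three-step argument or by spelling out the nonnegative-density computation.
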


\begin{proof} 
This claim reduces the one-dimensional case.
Hence, we may assume that $w$ is a one-dimensional FBM
with Hurst parameter $H \in (1/4, 1)$.
When $H \in (1/4, 1/2]$, it is shown in \cite[Example 1]{fv_2Dvar} that 
the covariance
has finite 2D H\"older-controlled $(2H)^{-1}$-variation.
Therefore, it suffices to check that the covariance
has finite 2D H\"older-controlled $1$-variation when $H \in (1/2, 1)$.
Recall that $w$ is positively correlated in this case, i.e., 
${\mathbb E}[w^1_{u,v} w^1_{s,t}]\ge 0$ if $u\le v \le s \le t$.
Here, we set $w^1_{s,t}:= w_t- w_s$.

Let both $\pi$ and $\pi'$ be partitions of $[s,t] \subset [0,T]$.
Since the sum on the right hand side of \eqref{def.2Dvar}
satisfies sub-additivity when $\rho =1$, 
the sum associated with $(\pi, \pi')$ is dominated by 
the sum associated with $(\pi\cup \pi', \, \pi\cup \pi')$.
Therefore, when we estimate $V_1 (\hat{R}, [s,t]^2)$, we may only 
consider the case $\pi =\pi'$ in \eqref{def.2Dvar}.

Due to the positive correlation, it holds 
for $\pi=\{s=t_0 <t_1 <\cdots <t_N =t\}$ that
\begin{align}  
\sum_k \Bigl|
{\mathbb E}[w^1_{t_{j-1}, t_j } w^1_{t_{k-1}, t_k}]
\Bigr|
=
\Bigl|
\sum_k
{\mathbb E}[w^1_{t_{j-1}, t_j } w^1_{t_{k-1}, t_k}]
\Bigr|
\le
\Bigl| 
{\mathbb E}[w^1_{t_{j-1}, t_j } w^1_{s,t}]
\Bigr|
\nn
\end{align}
for every $j$.
Hence, it suffices to prove that there exists a positive constant 
$C=C_H$ such that, for every $[u, v] \subset [s,t]$,   
\begin{equation}  
\bigl| 
{\mathbb E}[w^1_{u,v} w^1_{s,t}]
\bigr|
\le C_H (v-u)
\label{ineq.0322-1}
\end{equation} 
holds.

By straightforward computation, we have 
\begin{align}  
2\bigl| 
{\mathbb E}[w^1_{u,v} w^1_{s,t}]
\bigr|
&=
|
-(t-v)^{2H} - (u-s)^{2H} + (v-s)^{2H} + (t-u)^{2H} 
|
\nn\\
&=
\{ (t-u)^{2H}  - (t-v)^{2H} \} 
+
\{ (v-s)^{2H}  - (u-s)^{2H} \} 
\nn\\
&=
(v-u )\int_0^1 2H\{t-v+\theta (v-u)\}^{2H-1}d\theta
\nn\\
&\qquad 
+
(v-u )\int_0^1 2H\{u-s+\theta (v-u)\}^{2H-1}d\theta
\nn\\
&\le
4H T^{2H-1}(v-u ).
\nn
\end{align}
Here, we used the mean-value theorem and $2H >1$. 
Thus, we obtained \eqref{ineq.0322-1}, which completes the proof of the proposition.
\end{proof}

\begin{remark} \label{rem.fbm}
The usual
$d$-dimensional FBM with Hurst parameter
$H\in (1/4, 1/2]$ satisfies the 
assumptions of Proposition \ref{prop.0322-1}.
Therefore, Theorem \ref{thm.0321} and Corollary \ref{cor.0321} 
are available for this Gaussian process with $(2\rho)^{-1} =H$.
\end{remark}

\begin{remark}
Quasi-sure lift of Brownian rough path was first introduced by \cite{in0}
with respect to the $p$-variation topology ($2<p<3$).
Quasi-sure lift with respect to the $\alpha$-H\"odler topology ($1/3<\alpha<1/2$) and the $\infty$-quasi-continuity was 
first proved by \cite{aida}. (See also \cite{in1}.) 
Those results are used in many papers on diffusion bridges 
such as \cite{in1, in3, in4, in5}.
A quasi-sure non-intersecting property of
the signature process of Brownian motion is obtained in \cite{bglq}.
 
Quasi-sure lift of a certain class of Gaussian  processes,
including FBM with Hurst parameter $H\in (1/4,1/2]$,
was first obtained by \cite{bgq}.
Using this, a quasi-sure non-intersecting property of 
the solution of an RDE driven by fractional Brownian rough path 
with Hurst parameter $H\in (1/4,1/2]$ is proved in \cite{orv}.
These two works use the variation topology.
The $\infty$-quasi-continuity of the lift map 
does not seem to be known, yet.
\end{remark}

\section{Conditioning fractional diffusion processes}
\label{sec.conditionFDP}

The main purpose of this section is defining 
fractional diffusion bridges,
which is one of the two main results of this paper.
Throughout this section, we assume $H \in (1/4, 1/2]$ and $d\in\N$. 

Denote by $\mu^H$ the law of
$d$-dimensional FBM with Hurst parameter $H$,
which is a centered Gaussian measure on $\mathcal{W}=\cC_0 (\R^d)$.
The coordinate process $w=(w_t)_{t\in [0,T]}$ on $\cW$ 
is a canonical realization of FBM under $\mu^H$.
The Cameron-Martin space of $\mu^H$ is denoted by $\cH^H$.
As is well-known, $(\cW, \cH^H, \mu^H)$ is an abstract Wiener space.

We study RDE \eqref{rde1} driven by the canonical lift 
$W:= \mathcal{S}_{\alpha\textrm{-}{\rm Hld}} (w)$ of FBM, 
where $(\lfloor 1/H\rfloor+1)^{-1} < \alpha <H$. It reads:
\begin{equation}\label{rde2}
dy_t = \sum_{j=1}^d V_j (y_t) dw^{j}_t  + V_0 (y_t) dt,
\qquad 
y_0 =a\in \R^e.
\end{equation}
In other words, $y = \Phi_a (W)$.
We denoted by $\mathbb{Q}_{a}^H$ the law of this process $y$, that is,
$\mathbb{Q}_{a}^H := (\Phi_a \circ \mathcal{S}_{\alpha\textrm{-}{\rm Hld}})_* \mu^H$.
Since $\Phi_a$ takes values in the ``little $\alpha$-H\"older space" 
$\tilde{\cC}^{\alpha\textrm{-Hld}}_a (\R^e)$,
this Borel probability measure sits on $\tilde{\cC}^{\alpha\textrm{-Hld}}_a (\R^e)$.

\subsection{Some properties of Cameron-Martin paths}

In this subsection
 we discuss the  Cameron-Martin space $\cH^H$ associated with $\mu^H$.
It is known that there is a continuous embedding
$\cH^H \subset \cC^{q\textrm{-var}}_0 (\R^d)$ for any $q$ with 
\begin{equation}\label{ineq.parameter}
\left( H + \tfrac12 \right)^{-1} < q<2.
\end{equation}
More precisely, for every $q$ as in \eqref{ineq.parameter}, 
there exists a constant $c_1=c_1(q) >0$
independent of $h$ and $(s,t)$
such that
\begin{equation} \label{ineq.0523-2}
\| h\|_{q\textrm{-var}, [s,t]}
\le 
c_1 \| h\|_{\cH^H} (t-s)^{\frac{1}{q} - \frac{1}{2}},
\qquad 
h \in \cH^H, \, (s,t)\in \triangle_T.
\end{equation}
Here, $\|\cdot\|_{q\textrm{-var}, [s,t]}$ stands for the
$q$-variation (semi)norm restricted to the subinterval $[s,t]$.
Note that $\tfrac{1}{q} - \tfrac{1}{2}\nearrow H$
as $q \searrow ( H + \tfrac12 )^{-1}$.
(See \cite{fv_embed} for the facts in this paragraph.)

By \eqref{ineq.0523-2} and the general theory of Young integration,
there exists  a constant $c_2=c_2(q) >0$
independent of $h$ and $(s,t)$ such that
\begin{equation}\label{ineq.0523-5}
|S(h)^i_{s,t} | \le c_2 \| h\|_{q\textrm{-var}, [s,t]}^i 
\le 
c_2c_1^i  \| h\|_{\cH^H}^i  (t-s)^{i (\frac{1}{q} - \frac{1}{2})},
\quad 
h \in \cH^H, \, (s,t)\in \triangle_T
\end{equation}
for all $i=1,2,3$.
This implies that 
$S_{\lfloor 1/H\rfloor} (h)$ is an $\alpha$-H\"older weakly geometric 
rough path for every $\alpha\in (\lfloor 1/H\rfloor+1)^{-1}, \, H)$.
 By \cite[Theorem 8.22]{fvbook} and slight adjustment of the 
 parameter $\alpha$, 
$S_{\lfloor 1/H\rfloor} (h)\in G\Omega_{\alpha\textrm{-}{\rm Hld}} (\R^d)$ 
for every $\alpha\in (\lfloor 1/H\rfloor+1)^{-1}, \, H)$.
By a standard argument, 
$\cH^H \ni h \mapsto S_{\lfloor 1/H\rfloor} (h) \in G\Omega_{\alpha\textrm{-}{\rm Hld}} (\R^d)$ is locally Lipschitz continuous.


Now we consider the piecewise linear approximation of 
a Cameron-Martin path. 
Since $\lim_{l\to\infty} h(l) =h$ in $(q+\delta)$-variation topology
for every sufficiently small $\delta >0$, 
we have $\lim_{l\to\infty} S_{\lfloor 1/H\rfloor} (h(l))
= S_{\lfloor 1/H\rfloor} (h)$ in $G\Omega_{1/\alpha\textrm{-var}} (\R^d)$
for every $h\in \cH^H$.
However, whether 
$h \in \mathcal{Z}_{\alpha\textrm{-}{\rm Hld}}$ is not so obvious
though it looks natural heuristically.

 For the pair of parameters $(\alpha, m)$,  we 
 introduce the following condition:
\begin{equation}\label{cond.0523-3}
m \in \N\qquad \mbox{and}\qquad \frac{1}{ \lfloor 1/H\rfloor+1} 
<\alpha -\frac{1}{12m}
< \alpha < H
\end{equation}
This condition will be frequently used in what follows.

\begin{lemma} \label{lem.0411-1}
Let the notation be as above and assume \eqref{cond.0523-3}. 
Then, we have 
$\cH^H \subset \mathcal{Z}_{\alpha, 12m\textrm{-}{\rm Bes}}$.
In particular, 
$\{S_{\lfloor 1/H\rfloor} (h(l))\}_{l\in\N}$ converges to 
$S_{\lfloor 1/H\rfloor} (h) 
= \mathcal{S}_{\alpha, 12m\textrm{-}{\rm Bes}} (h)$
in $G\Omega_{\alpha, 12m\textrm{-}{\rm Bes}} (\R^d)$ for every $h\in \cH^H$.
\end{lemma}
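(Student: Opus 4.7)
The strategy is to show convergence of $W(l) = S_{\lfloor 1/H\rfloor}(h(l))$ to $S_{\lfloor 1/H\rfloor}(h)$ in $G\Omega_{\alpha, 12m\textrm{-Bes}}(\R^d)$ by a dominated-convergence argument applied directly to the Besov defining integral. The two ingredients are a uniform H\"older-type bound on each level of the signatures of $h(l)$ and pointwise convergence on $\triangle_T$ coming from continuity of Young's iterated integral. This delivers both $h \in \mathcal{Z}_{\alpha, 12m\textrm{-Bes}}$ and the identification $\mathcal{S}_{\alpha, 12m\textrm{-Bes}}(h) = S_{\lfloor 1/H\rfloor}(h)$ at once.

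First, I would fix $\alpha' \in (\alpha, H)$ and set $q := (\alpha' + 1/2)^{-1} \in ((H+1/2)^{-1}, 2)$, so that $1/q - 1/2 = \alpha'$. Inequality \eqref{ineq.0523-2} gives $\|h\|_{q\textrm{-var}, [s,t]} \le c_1 \|h\|_{\cH^H}(t-s)^{\alpha'}$. Next I would establish the analogous bound
$$\|h(l)\|_{q\textrm{-var}, [s,t]} \le C \|h\|_{\cH^H}(t-s)^{\alpha'}, \qquad l \in \N, \ (s,t) \in \triangle_T,$$
uniformly in $l$, by splitting into the cases $t - s \ge T2^{-l}$ and $t - s < T2^{-l}$. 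In the second case one exploits that $h(l)$ is linear inside each cell of $\mathcal{P}_l$ and applies \eqref{ineq.0523-2} on the ambient dyadic cell, the exponent $q<2$ being essential to close the estimate. Combining with the Young iterated integral bound \eqref{ineq.0523-5} then yields
$$|S(h)^i_{s,t}| \vee \sup_{l \in \N} |S(h(l))^i_{s,t}| \le C' \|h\|_{\cH^H}^i (t-s)^{i\alpha'}, \qquad i \in \llbracket 1, \lfloor 1/H \rfloor \rrbracket.$$

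For pointwise convergence, observe that $h \in \cC_0^{q\textrm{-var}}(\R^d)$ and $h(l) \to h$ uniformly on $[0,T]$, while $\sup_l \|h(l)\|_{q\textrm{-var}}$ is finite by the uniform bound above. A standard interpolation between uniform convergence and bounded $q$-variation gives $h(l) \to h$ in $q'$-variation for every $q' \in (q, 2)$, and continuity of Young's iterated integral in the $q'$-variation topology then yields $S(h(l))^i_{s,t} \to S(h)^i_{s,t}$ for every $(s,t) \in \triangle_T$, uniformly in $(s,t)$, for each $i$.

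Combining these two ingredients, the Besov integrand
$$\frac{|S(h(l))^i_{s,t} - S(h)^i_{s,t}|^{12m/i}}{(t-s)^{1 + 12m\alpha}}$$
is majorized by $C''(t-s)^{12m(\alpha' - \alpha) - 1}$, which is integrable on $\triangle_T$ since $\alpha' > \alpha$, and it tends to zero pointwise. Dominated convergence delivers $\|S(h(l))^i - S(h)^i\|_{i\alpha, 12m/i\textrm{-Bes}} \to 0$ for each $i$, hence the desired convergence in $G\Omega_{\alpha, 12m\textrm{-Bes}}(\R^d)$. The main obstacle I expect is the uniform sub-interval $q$-variation estimate for $h(l)$ on intervals shorter than the dyadic scale $T2^{-l}$, where monotonicity over nested dyadic cells is unavailable and the linearity of $h(l)$ inside a single cell must be combined with \eqref{ineq.0523-2} via the elementary two-case analysis outlined above.
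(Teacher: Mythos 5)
Your argument is correct in substance but follows a route quite different from the paper's, which goes through Malliavin calculus rather than a deterministic dominated-convergence scheme. In the paper, one observes that $F^{i}_l(w)=\|W(l+1)^i-W(l)^i\|_{i\alpha,12m/i\textrm{-Bes}}^{12m/i}$ is an inhomogeneous Wiener-chaos functional of degree $12m$ whose top-order directional derivative $(D_h)^{12m}F^i_l$ is (up to a universal constant) the deterministic quantity $\|H(l+1)^i-H(l)^i\|_{i\alpha,12m/i\textrm{-Bes}}^{12m/i}$. Feeding the $L^2$ geometric decay $\|F^i_l\|_{L^2}\lesssim\kappa^l$ from Corollary \ref{cor.0315-3} through the Sobolev-norm equivalence \eqref{eq.0312-2} on finite chaos then yields a geometric bound $\|H(l+1)^i-H(l)^i\|^{12m/i}\lesssim\|h\|_{\cH^H}^{12m}\kappa^l$, hence Cauchyness. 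Your approach is purely analytic: a uniform subinterval $q$-variation bound $\|h(l)\|_{q\textrm{-var},[s,t]}\le C\|h\|_{\cH^H}(t-s)^{\alpha'}$ for the piecewise-linear approximations (established by the two-case dyadic analysis you describe), which combined with \eqref{ineq.0523-5} gives the integrable dominating function $C''(t-s)^{12m(\alpha'-\alpha)-1}$; pointwise convergence of the signatures via interpolation and Young continuity; and dominated convergence on the Besov integral. Each route has its merits. The paper's argument is shorter once the probabilistic infrastructure (Corollary \ref{cor.0315-3}, chaos equivalence) is in place, and it also produces a geometric rate for free, at the cost of relying on the Wiener-chaos machinery; your argument is self-contained and does not touch Malliavin calculus, but it requires the somewhat fiddly (though true) uniform sub-dyadic $q$-variation estimate for $h(l)$ on arbitrary small intervals, which you correctly single out as the main technical obstacle. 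Both establish the lemma; note that you also get the identification $\mathcal{S}_{\alpha,12m\textrm{-Bes}}(h)=S_{\lfloor 1/H\rfloor}(h)$ directly from the dominated-convergence limit, which the paper obtains afterwards in Corollary \ref{cor.0523-1} via the $(q+\delta)$-variation convergence of $h(l)\to h$.
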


\begin{proof} 
It suffices to show that 
$h\in \mathcal{Z}_{\alpha, 12m\textrm{-}{\rm Bes}}$,
that is, $\{S_{\lfloor 1/H\rfloor} (h(l))\}_{l\in\N}$ is a Cauchy sequence 
in $G\Omega_{\alpha, 12m\textrm{-}{\rm Bes}} (\R^d)$, 
 for every $h\in \cH^H$.
We only prove the case $1/4 < H\le 1/3$ since the case $1/3 < H\le 1/2$
is much easier.

In this proof, we use the following notation. 
We write $H(l):= S_3 (h(l))$ and $W(l):= S_3 (w(l))$.
Let $C>0$ and $\kappa \in (0,1)$ be constants 
independent of  $l$ and $h$, which may vary from line to line.
We write
$i, j, k \in \llbracket 1, d\rrbracket$ for coordinates of $\R^d$.
(These parameters do not stand for the level of a rough path.)

First, we calculate the first level paths.
For every $l\in \N$, we set 
\[
F^{1,i}_l (w) := 
\|W(l+1)^{1,i} -W(l)^{1,i}\|_{\alpha, 12m\textrm{-}{\rm Bes}}^{12m} 
=
\iint_{ \triangle_T}   \frac{| W(l+1)^{1,i}_{s,t} -W(l)^{1,i}_{s,t}|^{12m} }{(t-s)^{1+12\alpha m}}dsdt.
\]
Then, $F^{1,i}_l$ is continuous in $w$ and belongs to $\mathscr{C}^{\prime}_{12m}$.
By Corollary \ref{cor.0315-3}, there exists $\kappa \in (0,1)$
and $C >0$ such that
\[
\| F^{1,i}_l\|_{L^2} \le C \kappa^l, \qquad  l\in\N.
\]
Noting that $D_h \{ W(l+1)^{1,i}_{s,t} -W(l)^{1,i}_{s,t} \}
=H(l+1)^{1,i}_{s,t} -H(l)^{1,i}_{s,t}$ for every $h\in \cH^H$
and $12m$ is an even integer, we can easily see that 
\begin{align*}
\langle D^{12m}F^{1,i}_l(w), \, h^{ \otimes 12m}  \rangle
&=
(D_h)^{12m} F^{1,i}_l(w) 
\\
&=
C\iint_{ \triangle_T}   \frac{| H(l+1)^{1,i}_{s,t} -H(l)^{1,i}_{s,t}|^{12m} }{(t-s)^{1+12\alpha m}}dsdt
\\
&=
C \|H(l+1)^{1,i} -H(l)^{1,i}\|_{\alpha, 12m\textrm{-}{\rm Bes}}^{12m} .
\end{align*}
From \eqref{eq.0312-2} we can also see that
\begin{align*}
\| \langle D^{12m}F^{1,i}_l(w), \, h^{ \otimes 12m}  \rangle\|_{L^2}
\le
\|h\|_{\cH^H}^{12m} \| F^{1,i}_l\|_{{\bf D}_{2,12m}}
\le
C \|h\|_{\cH^H}^{12m} \| F^{1,i}_l\|_{L^2}
\le
 C \|h\|_{\cH^H}^{12m} \kappa^l
\end{align*}
for every $l\in\N$.
Thus, we have obtained 
\[ 
  \|H(l+1)^{1,i} -H(l)^{1,i}\|_{\alpha, 12m\textrm{-}{\rm Bes}}^{12m}
  \le C \|h\|_{\cH^H}^{12m} \kappa^l, \qquad l\in\N.
\]
This implies that the sequence $\{ H(l)^1\}_{l\in\N}$
is Cauchy with respect to the $(\alpha, 12m)$-Besov topology.

Next, we calculate the second level paths.
For every $l\in \N$, we set 
\[
F^{2,ij}_l (w) := 
\|W(l+1)^{2,ij} -W(l)^{2,ij}\|_{2\alpha, 6m\textrm{-}{\rm Bes}}^{6m} 
=
\iint_{ \triangle_T}   \frac{| W(l+1)^{2,ij}_{s,t} -W(l)^{2,ij}_{s,t}|^{6m} }{(t-s)^{1+12\alpha m}}dsdt.
\]
Then, $F^{2,ij}_l$ is continuous in $w$ and belongs to $\mathscr{C}^{\prime}_{12m}$.
By Corollary \ref{cor.0315-3}, there exists $\kappa \in (0,1)$
and $C>0$ such that
\[
\| F^{2,ij}_l\|_{L^2} \le C \kappa^l, \qquad  l\in\N.
\]
Since $w\mapsto W(l)^{2,ij}_{s,t}$ is a continuous quadratic 
Wiener functional, we can easily see that
\[
(D_h)^2 \{ W(l+1)^{2,ij}_{s,t} -W(l)^{2,ij}_{s,t} \}
=2\{H(l+1)^{2,ij}_{s,t} -H(l)^{2,ij}_{s,t}\}, \qquad h\in \cH^H.
\]
By applying $(D_h)^{12m}$ to $F^{2,ij}_l$, we can obtain 
\[
  \|H(l+1)^{2,ij} -H(l)^{2,ij}\|_{2\alpha, 6m\textrm{-}{\rm Bes}}^{6m}
  \le C \|h\|_{\cH^H}^{12m} \kappa^l, \qquad l\in\N,
\]
essentially in the same way as in the first level case.
Thus, the sequence $\{ H(l)^2\}_{l\in\N}$
is Cauchy with respect to the $(2\alpha, 6m)$-Besov topology.

Finally, we calculate the third level paths.
For every $l\in \N$, we set 
\[
F^{3,ijk}_l (w) := 
\|W(l+1)^{3,ijk} -W(l)^{3,ijk}\|_{3\alpha, 4m\textrm{-}{\rm Bes}}^{4m} 
=
\iint_{ \triangle_T}   \frac{| W(l+1)^{3,ijk}_{s,t} -W(l)^{3,ijk}_{s,t}|^{4m} }{(t-s)^{1+12\alpha m}}dsdt.
\]
Then, $F^{3,ijk}_l$ is continuous in $w$ and belongs to $\mathscr{C}^{\prime}_{12m}$.
By Corollary \ref{cor.0315-3}, there exists $\kappa \in (0,1)$
and $C>0$ such that
\[
\| F^{3,ijk}_l\|_{L^2} \le C \kappa^l, \qquad  l\in\N.
\]
In this case, we have 
\[
(D_h)^3 \{ W(l+1)^{3,ijk}_{s,t} -W(l)^{3,ijk}_{s,t} \}
=6\{H(l+1)^{3,ijk}_{s,t} -H(l)^{3,ijk}_{s,t}\}, \qquad h\in \cH^H.
\]
By applying $(D_h)^{12m}$ to $F^{3,ijk}_l$, we can obtain 
\[
  \|H(l+1)^{3,ijk} -H(l)^{3,ijk}\|_{3\alpha, 4m\textrm{-}{\rm Bes}}^{4m}
  \le C \|h\|_{\cH^H}^{12m} \kappa^l, \qquad l\in\N,
\]
essentially in the same way as in the first level case.
Thus, the sequence $\{ H(l)^3\}_{l\in\N}$
is Cauchy with respect to the $(3\alpha, 4m)$-Besov topology.
This completes the proof of the lemma.
\end{proof}


\begin{corollary} \label{cor.0411-2}
Assume $(\lfloor 1/H\rfloor+1)^{-1} < \alpha <H$ and 
let the notation be as above. 
Then, we have 
$\cH^H \subset \mathcal{Z}_{\alpha\textrm{-}{\rm Hld}}$.
In particular, 
$\{S_{\lfloor 1/H\rfloor} (h(l))\}_{l\in\N}$ converges to 
$S_{\lfloor 1/H\rfloor} (h) = \mathcal{S}_{\alpha\textrm{-}{\rm Hld}} (h)$
in $G\Omega_{\alpha\textrm{-}{\rm Hld}} (\R^d)$ for every $h\in \cH^H$.
\end{corollary}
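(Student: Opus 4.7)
The strategy is to reduce the H\"older claim to the Besov-topology version already established in Lemma \ref{lem.0411-1} by exploiting the continuous embedding \eqref{ineq.0311-4}. This is the same pattern used in the proof of Corollary \ref{cor.0321}, where a H\"older-level statement about the Gaussian lift was deduced from its Besov-level counterpart.

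For a given $\alpha \in ((\lfloor 1/H\rfloor+1)^{-1}, H)$, the first step is to pick auxiliary parameters $(\alpha',m)$ satisfying condition \eqref{cond.0523-3} together with the extra requirement $\alpha' - \frac{1}{12m} \ge \alpha$. An explicit workable choice is $\alpha' := (\alpha+H)/2 \in (\alpha, H)$ together with any $m \in \N$ large enough that $\frac{1}{12m} \le \alpha' - \alpha$. This choice forces $\alpha' - \frac{1}{12m} \ge \alpha > (\lfloor 1/H\rfloor+1)^{-1}$, so condition \eqref{cond.0523-3} is verified for $(\alpha',m)$. Next, Lemma \ref{lem.0411-1} applied to this pair yields, for every $h \in \cH^H$, the convergence $S_{\lfloor 1/H\rfloor}(h(l)) \to S_{\lfloor 1/H\rfloor}(h)$ in $G\Omega_{\alpha', 12m\textrm{-Bes}}(\R^d)$. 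Setting $\beta := \alpha' - \frac{1}{12m} \ge \alpha$, the right-hand embedding in \eqref{ineq.0311-4} transfers this convergence to $G\Omega_{\beta\textrm{-Hld}}(\R^d)$, and the trivial inclusion $G\Omega_{\beta\textrm{-Hld}}(\R^d) \hookrightarrow G\Omega_{\alpha\textrm{-Hld}}(\R^d)$ (for $\beta \ge \alpha$) then gives convergence in $G\Omega_{\alpha\textrm{-Hld}}(\R^d)$. By the definition \eqref{def.0314-1} of $\mathcal{S}_{\alpha\textrm{-Hld}}$, this says exactly that $h \in \mathcal{Z}_{\alpha\textrm{-Hld}}$ and $\mathcal{S}_{\alpha\textrm{-Hld}}(h) = S_{\lfloor 1/H\rfloor}(h)$.

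No serious obstacle is anticipated, since the entire technical burden has already been shouldered by Lemma \ref{lem.0411-1}; the only point requiring a little care is verifying that the auxiliary pair $(\alpha',m)$ can be chosen to satisfy both \eqref{cond.0523-3} and $\alpha' - 1/(12m) \ge \alpha$ simultaneously, which the explicit construction above confirms.
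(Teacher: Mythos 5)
Your proof is correct and follows essentially the same reduction as the paper: choose an auxiliary pair $(\alpha',m)$ satisfying \eqref{cond.0523-3} with $\alpha' - (12m)^{-1} \ge \alpha$, apply Lemma \ref{lem.0411-1}, and transfer from the Besov topology to the $\alpha$-H\"older topology via the embedding \eqref{ineq.0311-4}. The only difference is that you spell out an explicit choice of $(\alpha',m)$, which the paper leaves implicit.
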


\begin{proof}
For any such $\alpha$, we can find $(\alpha', m)$ 
which satisfies \eqref{cond.0523-3} and
$\alpha' -(12m)^{-1} >\alpha$.
Then, the corollary follows immediately from Lemma \ref{lem.0411-1} and 
the Besov-H\"older embedding \eqref{ineq.0311-4} .
\end{proof}

For $h\in \cH^H$, consider the following controlled ODE 
in the Young sense:
\begin{equation}\label{ode2}
d\psi^h_t = \sum_{j=1}^d V_j (\psi^h_t) dh_t  + V_0 (\psi^h_t) dt,
\qquad 
\psi^h_0 =a\in \R^e.
\end{equation}
This is called the skeleton ODE associated with RDE \eqref{rde2}.
(Note that this makes sense at least in the $q$-variational setting
since $q<2$.)
We write $\psi^h := \Psi_a (h)$.
Then, $\Psi_a \colon \cH^H \to \cC^{q\textrm{-var}}_a (\R^e)$ is 
a continuous map.

\begin{corollary}\label{cor.0523-1}
(1) Under the assumption of Lemma \ref{lem.0411-1},
$ \Phi_a \circ \mathcal{S}_{\alpha, 12m\textrm{-}{\rm Bes}}(h)= \Psi_a (h)$
for every $h\in \cH^H$.
\\
(2) Under the assumption of Corollary \ref{cor.0411-2}, 
we have $\Phi_a \circ \mathcal{S}_{\alpha\textrm{-}{\rm Hld}}(h)
= \Psi_a (h)$ for every $h\in \cH^H$.
\end{corollary}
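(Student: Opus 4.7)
The plan is to approximate $h$ by its piecewise linear interpolation $h(l)$ and then invoke continuity of both the Lyons-It\^o map and the Young-ODE solution map, after first verifying that all three notions of solution coincide on smooth drivers.

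For part (1), fix $h \in \cH^H$ and $l \in \N$. Since $h(l)$ is piecewise $C^1$, the controlled equation reduces to a classical ODE with unique $C^1$-solution $y^{(l)}$. Two standard reconciliations apply: the Lyons-It\^o map evaluated at the canonical lift $S_{\lfloor 1/H\rfloor}(h(l))$ returns $y^{(l)}$ (the RDE collapses to the classical ODE when the driver is smooth), and the Young-sense solution $\Psi_a(h(l))$ also equals $y^{(l)}$ (Young integration against a path of bounded variation coincides with the Riemann-Stieltjes integral). Hence $\Phi_a(S_{\lfloor 1/H\rfloor}(h(l))) = \Psi_a(h(l))$ for every $l$.

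Next I would pass to the limit on both sides. On the rough-path side, Lemma \ref{lem.0411-1} gives
$S_{\lfloor 1/H\rfloor}(h(l)) = \mathcal{S}_{\alpha, 12m\textrm{-}{\rm Bes}}(h(l)) \to \mathcal{S}_{\alpha, 12m\textrm{-}{\rm Bes}}(h) = S_{\lfloor 1/H\rfloor}(h)$
in $G\Omega_{\alpha, 12m\textrm{-}{\rm Bes}}(\R^d)$, and the embedding \eqref{ineq.0311-4} upgrades this to convergence in $G\Omega_{\beta\textrm{-Hld}}(\R^d)$ with $\beta = \alpha - (12m)^{-1} > (\lfloor 1/H\rfloor+1)^{-1}$. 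Under the standing regularity of the vector fields, the Lyons-It\^o map is continuous on this H\"older rough-path space, so $\Phi_a(S_{\lfloor 1/H\rfloor}(h(l))) \to \Phi_a \circ \mathcal{S}_{\alpha, 12m\textrm{-}{\rm Bes}}(h)$. On the Young side, $h(l) \to h$ in $(q+\delta)$-variation for $q$ as in \eqref{ineq.parameter} and $\delta>0$ sufficiently small, and continuity of $\Psi_a$ yields $\Psi_a(h(l)) \to \Psi_a(h)$. Matching the limits proves (1). Part (2) follows by the identical argument with Corollary \ref{cor.0411-2} in place of Lemma \ref{lem.0411-1}; alternatively, since the Besov and H\"older quasi-sure lifts agree on Cameron-Martin paths (both being $S_{\lfloor 1/H\rfloor}(h)$), (2) is an immediate consequence of (1).

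The only substantive point is the equality on smooth approximants, which amounts to a consistency statement among three definitions of integration (Riemann-Stieltjes, Young, rough) when the integrator has bounded variation; this is standard. The main organizational task is choosing $(\alpha, m)$ and $q$ so that all three convergence statements hold simultaneously in their respective topologies, which is exactly what conditions \eqref{cond.0523-3} and \eqref{ineq.parameter} are designed to secure.
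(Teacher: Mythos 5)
Your proposal is correct and follows essentially the same route as the paper: approximate $h$ by the dyadic piecewise linear interpolation $h(l)$, use the consistency of the RDE, Young, and classical solutions on piecewise $C^1$ drivers to get $\Phi_a(S_{\lfloor 1/H\rfloor}(h(l))) = \Psi_a(h(l))$, and then pass to the limit on the Young side via $(q+\delta)$-variation continuity and on the rough-path side via Lemma \ref{lem.0411-1} (resp.\ Corollary \ref{cor.0411-2}) together with continuity of the Lyons--It\^o map. The paper writes the argument starting from part (2) and your write-up starts from part (1), but the content is identical; you spell out the smooth-driver consistency step a bit more explicitly, which is a minor presentational difference rather than a change of method.
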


\begin{proof}
We prove (2) only because we can prove (1) in the same way.
Let $q \in [1, 2)$ be as in \eqref{ineq.parameter} and 
take $\delta>0$ so small that $q+\delta <2$.
Since $\lim_{l\to\infty} h(l) =h$ in $(q+\delta)$-variation topology,
$\lim_{l\to\infty} \Psi_a ( h(l)) =\Psi_a (h)$ in $(q+\delta)$-variation topology.
On the other hand, $\Psi_a ( h(l))= \Phi_a  \circ S_{\lfloor 1/H\rfloor} (h(l))
\to \Phi_a  \circ 
\mathcal{S}_{\alpha\textrm{-}{\rm Hld}} ( h)$ as $l \to \infty$ by 
Corollary \ref{cor.0411-2}.
Thus, we have obtained $\Phi_a \circ \mathcal{S}_{\alpha\textrm{-}{\rm Hld}}(h)
= \Psi_a (h)$.
\end{proof}

Now, let us recall the Young translation.
Suppose that $\alpha\in (1/4, 1/2]$ and $q \in [1,2)$
satisfy $\alpha + q^{-1} >1$. 
Then, there is a unique continuous map 
$\mathcal{T} \colon 
\cC^{q\textrm{-var}}_0 (\R^d)\times
G\Omega_{1/\alpha\textrm{-var}} (\R^d) 
 \to G\Omega_{1/\alpha\textrm{-var}} (\R^d)$
with the following property:
\[
\mathcal{T}_k ( S_{\lfloor 1/\alpha\rfloor} (x)) = S_{\lfloor 1/\alpha\rfloor} 
(\tau_k (x)), \qquad
x, k \in \cC^{q\textrm{-var}}_0 (\R^d).
\]
Here, we set $\tau_k (x) := x+k$.
This map $\mathcal{T}$ is actually locally Lipschitz continuous.
We call
$\mathcal{T}_k (X)\in G\Omega_{1/\alpha\textrm{-var}} (\R^d)$ the Young translation of $X\in G\Omega_{1/\alpha\textrm{-var}} (\R^d)$ by 
$k \in \cC^{q\textrm{-var}}_0 (\R^d)$.
As is well-known, 
each level of $\mathcal{T}_k (X)$ can be written as 
an iterated Young integral.

In the next lemma, which will ve used in Section \ref{sec.proof.low},
we will check that the Young translation still works 
even if $\cC^{q\textrm{-var}}_0 (\R^d)$ and $G\Omega_{1/\alpha\textrm{-var}} (\R^d)$ are replaced by 
$\cH^H$ and $G\Omega_{\alpha, 12m\textrm{-}{\rm Bes}} (\R^d)$,
respectively.
Before providing a precise statement of the lemma, 
we introduce another condition on $(\alpha, m)$:
\begin{equation}\label{cond.0523-4}
\left( \alpha -\frac{1}{12m} \right)+\left( H +\frac12\right) >1
\qquad \mbox{and}\qquad
 \alpha +\frac{2}{12m} <H.
\end{equation}
Suppose that $H \in (1/4, 1/2]$ is given.
If we take $\alpha$ sufficiently close to $H$ and 
take $m$ sufficiently large (for that $\alpha$), 
then $(\alpha, m)$ satisfies both
\eqref{cond.0523-3} and \eqref{cond.0523-4}.

\begin{lemma} \label{lem.Ytrans}
Let $(\alpha, m)$ satisfy both \eqref{cond.0523-3} 
and \eqref{cond.0523-4}.
Then, the Young translation $\mathcal{T}$ maps
$\cH^H \times   G\Omega_{\alpha, 12m\textrm{-}{\rm Bes}} (\R^d)$
onto $G\Omega_{\alpha, 12m\textrm{-}{\rm Bes}} (\R^d)$.
Moreover, this map is locally Lipschitz continuous.
\end{lemma}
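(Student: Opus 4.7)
The plan is to work level by level, using the explicit iterated Young integral representation that already defines $\mathcal{T}$ on $\cC^{q\textrm{-var}}_0(\R^d)\times G\Omega_{1/\alpha\textrm{-var}}(\R^d)$. Setting $K:=S_{\lfloor 1/\alpha\rfloor}(k)$ for the Young lift of $k$, each level admits a decomposition
\[
\mathcal{T}_k(X)^i_{s,t}
\;=\; X^i_{s,t} \;+\; K^i_{s,t} \;+\; \sum_{\text{mixed}} J^{(i)}_{s,t},
\]
where the mixed pieces $J^{(i)}$ are iterated Young integrals whose integrands and integrators alternate increments of various $X^j$'s and of $k$. The $X^i$ part is controlled by hypothesis, and Lemma~\ref{lem.0411-1} handles the pure piece $K^i$ with the quantitative bound $\|K^i\|_{i\alpha, 12m/i\textrm{-Bes}}\le C\|h\|_{\cH^H}^i$, which is already implicit in the IBP-based estimates in its proof. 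The whole problem therefore reduces to estimating the mixed iterated integrals in the Besov norm at each level.

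For this, I would fix $q\in ((H+\tfrac12)^{-1},(\alpha+\tfrac12)^{-1})$; by \eqref{ineq.0523-2} the path $h$ is then H\"older of order $\tfrac1q-\tfrac12>\alpha$ with H\"older norm dominated by $\|h\|_{\cH^H}$. The first inequality of \eqref{cond.0523-4} guarantees $(\alpha-\tfrac{1}{12m})+\tfrac1q>1$ for $q$ sufficiently close to $(H+\tfrac12)^{-1}$, so a standard Young estimate combined with the Besov--H\"older embedding \eqref{ineq.0311-4} and \eqref{ineq.0523-2} yields, for a typical level-$2$ mixed term $J_{s,t}=\int_s^t X^1_{s,u}\otimes dk_u$,
\[
|J_{s,t}|
\;\le\;
C\,\|X^1\|_{\alpha, 12m\textrm{-Bes}}\cdot \|h\|_{\cH^H}\cdot (t-s)^{(\alpha-\frac{1}{12m})+(\frac1q-\frac12)}.
\]
Substituting into $\|J\|_{2\alpha, 6m\textrm{-Bes}}^{6m}=\iint_{\triangle_T} |J_{s,t}|^{6m}/(t-s)^{1+12\alpha m}\,ds\,dt$, the resulting denominator exponent stays strictly less than $1$ exactly when $1/q-1/2>\alpha+1/(12m)$, and since $1/q-1/2$ can be taken arbitrarily close to $H$, this holds thanks to $\alpha+\tfrac{2}{12m}<H$ from \eqref{cond.0523-4}. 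The other level-$2$ mixed term and the six level-$3$ mixed integrals are handled by the same mechanism, simply redistributing the available H\"older exponents among the three integration variables.

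The algebraic identities (Chen's relation and the shuffle relations) of $\mathcal{T}_k(X)$ are inherited from the standard Young theory on $G\Omega_{1/\alpha\textrm{-var}}(\R^d)$ and survive the strengthening of topology, so the Besov bounds above upgrade the conclusion to $\mathcal{T}_k(X)\in G\Omega_{\alpha, 12m\textrm{-Bes}}(\R^d)$. Local Lipschitz continuity in $(k,X)$ follows by repeating the estimates on the differences $\mathcal{T}_k(X)^i-\mathcal{T}_{k'}(X')^i$ and exploiting the multilinear dependence of iterated Young integrals on their arguments. The hard part is the exponent bookkeeping at level $3$: the mixed integrals there involve factors like $X^2$ or $K^2$ whose H\"older regularity is only $2\alpha-2/(12m)$, and the required inequality becomes $1/q-1/2>\alpha+2/(12m)$, which is precisely the slack built into the second part of \eqref{cond.0523-4}. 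So \eqref{cond.0523-4} is tight at level $3$ and this case deserves a careful case split to verify that every Besov-scale and Young-integrability threshold closes up simultaneously.
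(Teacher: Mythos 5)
Your overall strategy coincides with the paper's: decompose $\mathcal{T}_k(X)^i$ into $X^i$, $K^i$, and mixed iterated-integral terms, estimate each mixed term by Young integration using the $\beta$-H\"older regularity of $X$ obtained from \eqref{ineq.0311-4} and the $(1/q-1/2)$-H\"older regularity of $k$ from \eqref{ineq.0523-2}, and close the Besov integrals via the exponent inequalities encoded in \eqref{cond.0523-4}. Your exponent bookkeeping for levels $2$ and $3$ (requiring $1/q-1/2>\alpha+1/(12m)$ and $1/q-1/2>\alpha+2/(12m)$, respectively) matches the paper's choice of $q$ with $(i-1)\beta+\tfrac1q-\tfrac12>i\alpha$.

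However, there are two concrete gaps. First, not every level-$3$ mixed term reduces to a single-variable Young integral against $dk$ in the obvious way. The terms $xkx$, $xxk$ do --- after integrating out the $dx$'s, they become $\int_s^t (\text{something }\beta\text{-H\"older in }u)\,dk_u$ --- but the term $\int_{s\le t_1\le t_2\le t_3\le t} dk_{t_1}\otimes dx_{t_2}\otimes dx_{t_3}$ does not: the candidate integrand $t_1\mapsto X^2_{t_1,t}$ depends on both $t_1$ and the moving endpoint, so it is not a path one Young-integrates directly. The paper resolves this by applying the shuffle relation $X^{1,p}X^{2,qr}=X^{3,pqr}+X^{3,qpr}+X^{3,qrp}$ (in the form with $K^1$ and $X^2$) to rewrite that term as $K^{1,\mu}_{s,t}X^{2,\nu\lambda}_{s,t}$ minus two already-estimated integrals. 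Your phrase ``simply redistributing the available H\"older exponents'' does not account for this, and the exponent inequality in \eqref{cond.0523-4} alone will not rescue you: the issue is not a missing exponent, but that the integral is not of the Young form you implicitly assume.

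Second, your last paragraph asserts that Chen/shuffle plus the Besov norm bounds ``upgrade the conclusion to $\mathcal{T}_k(X)\in G\Omega_{\alpha, 12m\textrm{-Bes}}(\R^d)$.'' That is not how $G\Omega$ is defined: it is a \emph{closure} of lifts of $C^1$-paths, not the set of norm-finite maps satisfying the algebraic relations. What the estimates give you directly is $\mathcal{T}_k(X)\in \Omega_{\alpha, 12m\textrm{-Bes}}(\R^d)$. The paper bridges this gap by noting $\mathcal{T}_k(S_3(x))=S_3(x+k)\in G\Omega$ for $x\in \cC_0^{1\textrm{-Hld}}(\R^d)$ (using \eqref{ineq.0523-5} to control $x+k$ in the right H\"older class), and then invoking the continuity of $\mathcal{T}$ together with the density of $S_3(\cC_0^{1\textrm{-Hld}})$ in $G\Omega_{\alpha, 12m\textrm{-Bes}}$. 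You need this approximation argument; it is not a byproduct of the algebraic identities. Finally, a minor point: the bound $\|K^i\|_{i\alpha,12m/i\textrm{-Bes}}\le C\|k\|_{\cH^H}^i$ is a direct consequence of the H\"older-type estimate \eqref{ineq.0523-5}, not of Lemma~\ref{lem.0411-1} (which concerns convergence of piecewise-linear approximations, not a quantitative seminorm bound).
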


\begin{proof}
We only prove the level 3 case (i.e. $1/4 <H\le 1/3$) since 
the level 2 case (i.e. $1/3 <H\le 1/2$) is much easier.
We write $\beta := \alpha -(12m)^{-1}$.
In this proof, $C_j~(j=1,2,\ldots)$ are positive constants 
which depend only on parameters of (rough) path spaces.

Take $q$ as in \eqref{ineq.parameter} close enough to $(H +\tfrac12)^{-1}$
so that 
\[
\beta +\frac{1}{q} >1 \qquad\mbox{and}\qquad
(i-1)\beta + \frac{1}{q} -\frac12 > i\alpha \quad (i=1,2,3)
\]
 holds.
 Note that 
the first inequality above is the sufficient condition for Young integration.
Let 
$X =(X^1, X^2, X^3) \in G\Omega_{\alpha, 12m\textrm{-}{\rm Bes}} (\R^d)$.
By \eqref{ineq.0311-4}, $X\in G\Omega_{\beta\textrm{-Hld}} (\R^d)$.
So, at least in the variational setting, $\mathcal{T}_k (X)$ is well-defined
for $X$ and $k \in \cH^H$. 
For simplicity,
we write $K =(K^1, K^2, K^3)$ for $S_3 (k)$
and $x_t := X^1_{0,t}$.

It follows immediately from \eqref{ineq.0523-2} and 
$(1/q) -(1/2) >\alpha$ that
\[
\|\mathcal{T}_k (X)^1\|_{\alpha, 12m\textrm{-Bes}}
=\|X^1\|_{\alpha, 12m\textrm{-Bes}}+\|K^1\|_{\alpha, 12m\textrm{-Bes}}
\le 
\|X^1\|_{\alpha, 12m\textrm{-Bes}}+C_1 \| k\|_{\cH^H}  <\infty.
 \]
Next, we first estimate the second level path. Recall that 
\[
\mathcal{T}_k (X)^2_{s,t} =X^2_{s,t}
+
\int_s^t X^1_{s,u} \otimes dk_u +\int_s^t K^1_{s,u} \otimes dx_{u} 
+K^2_{s,t},
\]
where the second and third terms on the right hand side
are Young integrals. 
By the general theory of Young integration, we have 
\begin{align*}
\left| 
\int_s^t X^1_{s,u} \otimes dk_u 
\right| 
+
\left| 
\int_s^t K^1_{s,u} \otimes dx_{u} 
\right| 
&\le 
C_2 \| x\|_{1/\beta\textrm{-var}, [s,t]}\| k\|_{q\textrm{-var}, [s,t]}
\\
&\le
C_3 \| X^1\|_{\beta\textrm{-Hld}} \| k\|_{\cH^H} 
(t-s)^{\beta +\frac{1}{q} - \frac{1}{2}}
\end{align*}
for every $X$, $k$ and $(s,t) \in \triangle_T$.
Note that 
the H\"older exponent on the right hand side is larger than $2\alpha$.
As we have seen in \eqref{ineq.0523-5}, 
$\|K^2\|_{(2\alpha+\delta)\textrm{-Hld}} \le C_4 \|k\|_{\cH^H}^2$ for sufficiently small $\delta >0$.
Hence, we have shown
$\|\mathcal{T}_k (X)^2 -X^2\|_{(2\alpha+\delta)\textrm{-Hld}}<\infty$,
which implies 
$\|\mathcal{T}_k (X)^2\|_{2\alpha, 6m\textrm{-Bes}}<\infty$ since 
$\|X^2\|_{2\alpha, 6m\textrm{-Bes}}<\infty$ by assumption.

Next we will show $\|\mathcal{T}_k (X)^3\|_{3\alpha, 4m\textrm{-Bes}}<\infty$. We can easily see that $\mathcal{T}_k (X)^3$ is written as sum 
of eight terms.
By assumption, $\|X^3\|_{3\alpha, 4m\textrm{-Bes}}<\infty$.
By the same argument as for $K^2$, we have
$\|K^3\|_{3\alpha, 4m\textrm{-Bes}}<\infty$. 
If we set 
\[
A_{s,t} := \int_{s\le  t_1\le  t_2\le  t_3 \le t } \Bigl(
dk_{t_1} \otimes dk_{t_2}\otimes dx_{t_3} 
+ dk_{t_1} \otimes dx_{t_2}\otimes dk_{t_3} 
+dx_{t_1} \otimes dk_{t_2}\otimes dk_{t_3}  \Bigr),
\]
then we see from the general theory of Young integration that
\[
|A_{s,t}|
\le C_5\| x\|_{1/\beta\textrm{-var}, [s,t]}\| k\|^2_{q\textrm{-var}, [s,t]}
\le 
C_6\| X^1\|_{\beta\textrm{-Hld}} \| k\|_{\cH^H}^2 
(t-s)^{\beta +2(\frac{1}{q} - \frac{1}{2})}
\]
for every $X$, $k$ and $(s,t) \in \triangle_T$.
The H\"older exponent on the right hand side is larger than $3\alpha$.
Hence, $\|A\|_{3\alpha, 4m\textrm{-Bes}}<\infty$. 

Set 
\[
B_{s,t} := \int_{s\le  t_1\le  t_2\le  t_3 \le t }
dx_{t_1} \otimes dk_{t_2}\otimes dx_{t_3},
\qquad
D_{s,t} := \int_s^t X^2_{s,u} \otimes dk_{u}.
\]
Then, we can see in the same way as above that
\[
|B_{s,t}|
\le C_7 \| x\|^2_{1/\beta\textrm{-var}, [s,t]}
\| k\|_{q\textrm{-var}, [s,t]}
\le 
C_8 \| X^1\|^2_{\beta\textrm{-Hld}} \| k\|_{\cH^H}
(t-s)^{2\beta +(\frac{1}{q} - \frac{1}{2})}
\]
for every $X$, $k$ and $(s,t) \in \triangle_T$.
Since the $1/\beta$-variation norm of $u\mapsto X^2_{s,u}$ on $[s,t]$
is dominated by $C_9 ( \| X^1\|^2_{1/\beta\textrm{-var}, [s,t]} 
+ \| X^2\|_{1/(2\beta)\textrm{-var}, [s,t]} )$, we can see that
\[
|D_{s,t}|
\le C_{10} (\| X^1\|^2_{\beta\textrm{-Hld}} 
+ \| X^2\|_{2\beta\textrm{-Hld}} )
\| k\|_{\cH^H}
(t-s)^{2\beta +(\frac{1}{q} - \frac{1}{2})}
\]
for every $X$, $k$ and $(s,t) \in \triangle_T$.
The H\"older exponent on the right hand side is larger than $3\alpha$.
Hence, $\|B\|_{3\alpha, 4m\textrm{-Bes}} +\|D\|_{3\alpha, 4m\textrm{-Bes}}  <\infty$.

The eighth term can be written in the coordinate form as
\begin{align}  
E_{s,t}^{\mu\nu\lambda} &:=\int_{s\le  t_1\le  t_2\le  t_3 \le t }
dk_{t_1}^{\mu} \otimes dx_{t_2}^{\nu}\otimes dx_{t_3}^{\lambda}
\nn\\
&=
K_{s,t}^{1, \mu}X_{s,t}^{2, \nu\lambda} 
-
\int_{s\le  t_1\le  t_2\le  t_3 \le t }
dx_{t_1}^{\nu} \otimes dk_{t_2}^{\mu}\otimes dx_{t_3}^{\lambda}
-
\int_{s\le  t_1\le  t_2\le  t_3 \le t }
 dx_{t_1}^{\nu}\otimes dx_{t_2}^{\lambda}
 \otimes dk_{t_3}^{\mu}
\nn
\end{align}
for every $\mu, \nu, \lambda \in  \llbracket 1, d\rrbracket$
and $(s,t) \in \triangle_T$
when $X =S_3 (x)$ for some $x \in \cC^{1\textrm{-Hld}}_0 (\R^d)$.
Note that the shuffle relation was used here.
The two integrals on the right hand side both makes sense 
in the Young sense even for a general $X \in G\Omega_{\alpha, 12m\textrm{-}{\rm Bes}} (\R^d)$ 
and have already been estimated.
From these we can easily see that 
\[
|E_{s,t}|
\le C_{11} (\| X^1\|^2_{\beta\textrm{-Hld}} 
+ \| X^2\|_{2\beta\textrm{-Hld}} )
\| k\|_{\cH^H}
(t-s)^{2\beta +(\frac{1}{q} - \frac{1}{2})}
\]
for every $X$, $k$ and $(s,t) \in \triangle_T$,
which  implies that $\|E\|_{3\alpha, 4m\textrm{-Bes}}  <\infty$.

Combining the above estimates all, we see that 
$\mathcal{T}_k (X) \in \Omega_{\alpha, 12m\textrm{-}{\rm Bes}} (\R^d)$.
It is a routine to check that 
$
\mathcal{T}\colon 
\cH^H \times   G\Omega_{\alpha, 12m\textrm{-}{\rm Bes}} (\R^d)
\to \Omega_{\alpha, 12m\textrm{-}{\rm Bes}} (\R^d)
$
is locally Lipschitz continuous.
If $X =S_3 (x)$ for some $x \in \cC^{1\textrm{-Hld}}_0 (\R^d)$,
then 
\[
\|x+ k\|_{q\textrm{-var}, [s,t]}\le 
C_{11}( \|x\|_{1\textrm{-Hld}} 
+\| k\|_{\cH^H} )(t-s)^{\frac{1}{q} - \frac{1}{2}},\qquad (s,t)\in \triangle_T.
\]
Repeating the same argument just below \eqref{ineq.0523-5},
$\mathcal{T}_k (X)=
S_{\lfloor 1/H\rfloor} (x+ k)\in G\Omega_{\beta\textrm{-}{\rm Hld}} (\R^d)$ 
for every $\beta~(\beta <H)$ sufficiently close to $H$.
Hence, $\mathcal{T}_k (X) \in  G\Omega_{\alpha, 12m\textrm{-}{\rm Bes}} (\R^d)$ in this case.
To prove $\mathcal{T}_k (X) \in  G\Omega_{\alpha, 12m\textrm{-}{\rm Bes}} (\R^d)$ for a general $X \in G\Omega_{\alpha, 12m\textrm{-}{\rm Bes}} (\R^d)$, it is enough to
take a sequence $\{ x(n)\}_{n\in\N} \subset \cC^{1\textrm{-Hld}}_0 (\R^d)$ such that 
$S_3 (x(n))$ converges to $X$ as $n\to\infty$ in $(\alpha, 12m)$-Besov topology and use the continuity of $\mathcal{T}$.
\end{proof}

\begin{remark} \label{rem.0910}
By combining  Corollary \ref{cor.0411-2}, Corollary \ref{cor.0523-1} and Lemma \ref{lem.Ytrans}, we can easily show that
the map $h \mapsto \psi^h =\Psi_a (h)$ is actually locally Lipschitz
continuous from $\cH^H$ to $\tilde\cC^{\alpha\textrm{-Hld}}_a (\R^e)$
for every $\alpha \in (0,H)$.
\end{remark}

\subsection{Fractional diffusion bridge measures}

In this subsection 
we assume the coefficient vector fields $V_j~(j\in \llbracket 0, d\rrbracket)$
of RDE \eqref{rde1} are of $C_{{\rm b}}^\infty$ (when 
they are viewed as $\R^e$-valued functions)
and study the RDE driven by the canonical lift of FBM 
$w =(w_t)_{t\in [0,T]}$ by using 
Malliavin calculus and quasi-sure analysis.
Under this condition on $V_j$'s, it is known that 
$y_t =y (t,a)\in {\bf D}_{\infty} (\R^e)$ for every $t \in [0, T]$ (see Inahama \cite{in2}).
We denote by 
$\mathbb{Q}^H_{a}$ the law of the solution process $y =y (\,\cdot\,,a)$.

Let $t\in (0,T]$ and 
suppose for a while that $y_t =y (t,a)$ is non-degenerate 
in the sense of Malliavin.
We denote its density function by $p(t, a, \cdot)$, namely,
\begin{equation}\label{def.mitsudo}
\mu^H (y(t, a) \in A) =\int_A  p(t, a, b) db
\end{equation}
for every Borel subset $A \subset \R^e$,
where $db$ stands for the Lebesgue measure on $\R^e$.
Obviously, $p(t, a, b)= {\mathbb E} [\delta_b (y (t,a)) ]\ge 0$.

When $p(T, a, b)>0$, we can define a certain 
``bridge measure" of ``conditioned process" by using 
quasi-sure analysis as follows.
Since $\delta_b (y (t,a))$ is a positive Watanabe distribution,
Sugita's theorem implies that there exists a unique 
Borel probability measure $\mu^H_{a,b}$ on $\cW$ corresponding to 
$p(T, a, b)^{-1}\delta_b (y (t,a))$.
Its rough path lift is denoted by $\nu^H_{a,b} := 
(\mathcal{S}_{\alpha\textrm{-}{\rm Hld}})_* \mu^H_{a,b}$
for $\alpha$ with $(\lfloor 1/H\rfloor+1)^{-1} < \alpha <H$.
Similarly, we can also define $\nu^H_{a,b} := 
(\mathcal{S}_{\alpha, 12m\textrm{-}{\rm Bes}})_* \mu^H_{a,b}$
for $(\alpha, m)$ with \eqref{cond.0523-3}.
Note that $\mu^H_{a,b} ((\mathcal{Z}_{\alpha\textrm{-}{\rm Hld}})^c )=0
= \mu^H_{a,b} ((\mathcal{Z}_{\alpha, 12m\textrm{-}{\rm Bes}})^c )$ 
since both $(\mathcal{Z}_{\alpha\textrm{-}{\rm Hld}})^c$ and
$(\mathcal{Z}_{\alpha, 12m\textrm{-}{\rm Bes}})^c$ are slim.
(We omit the H\"older or Besov parameters from the symbol $\nu^H_{a,b}$ since these measure can be naturally viewed as the same measure through the embedding of rough path spaces.
Similar notation is used for $\mathbb{Q}^H_{a}$ and $\mathbb{Q}^H_{a,b}$, the latter of which will be introduced in the next paragraph.)

Next we set  
$\mathbb{Q}^H_{a,b} :=(\Phi_a)_* \nu^H_{a,b} 
=(\Phi_a \circ \mathcal{S}_{\alpha\textrm{-}{\rm Hld}})_*\mu^H_{a,b}$
for $\alpha$ with $(\lfloor 1/H\rfloor+1)^{-1} < \alpha <H$.
This is a probability measure on $\tilde\cC^{\alpha\textrm{-}{\rm Hld}}_a (\R^e)$.
Recall that $y (\,\cdot\,,a) = \Phi_a \circ \mathcal{S}_{\alpha\textrm{-}{\rm Hld}}$,
$\mu^H$-a.s. on $\cW$.
(When $p(T, a, b)=0$, we set $\mathbb{Q}^H_{a,b} :=\delta_{\ell(a,b)}$,
where $\ell(a,b)_t := a + (b-a) (t/T)$. However,
this definition is almost irrelevant, because the law of 
$y(T, a)$ does not charge the subset $\{b \in \R^e\mid p(T, a, b)=0\}$.)

\begin{lemma} \label{lem.0510_1}
Let $H \in (1/4, 1/2]$ and $(\lfloor 1/H\rfloor+1)^{-1} < \alpha <H$.
Assume that 
$y_T =y (T,a)$ is non-degenerate in the sense of Malliavin and $p(T, a, b) >0$.
Then, $\mathbb{Q}^H_{a,b}$ sits on 
$\tilde{\cC}^{\alpha\textrm{-}{\rm Hld}}_{a, b} (\R^e) 
:=\{z\in \tilde{\cC}^{\alpha\textrm{-}{\rm Hld}} (\R^e) 
\mid z_0 =a, z_T=b\}$.
Moreover, 
\begin{align}  \label{eq.0510_1}
\lefteqn{
 {\mathbb E} \left[
g (y_{t_1}, \ldots, y_{t_{N-1}}, y_T) \delta_b (y_T)
\right]
}
\nn\\
&=
p(T, a, b) \int_{\cW}  g (y_{t_1}(w), \ldots, y_{t_{N-1}} (w),  b) 
\mu^H_{a,b} (dw)
\nn\\
&=
p(T, a, b)
\int_{\tilde{\cC}^\alpha_a (\R^e)}  g (z_{t_1}, \ldots, z_{t_{N-1}},  b) 
\mathbb{Q}^H_{a,b} (dz).
\end{align}
\end{lemma}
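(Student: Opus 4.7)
The plan is to exploit the $\infty$-quasi-continuity of the solution map established in Corollary \ref{cor.0904}, combined with Sugita's theorem and Lemma \ref{lem.jigoku2}, to identify $\mu^H_{a,b}$ as a measure concentrated on the level set $\{w : y(T,a)(w) = b\}$ (interpreted in a quasi-continuous sense). This will give both the support statement for $\mathbb{Q}^H_{a,b}$ and the integral identity essentially by bookkeeping, with no further hard analysis beyond what Section \ref{sec.QSlift} already supplies.

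For the support claim, I would set $F(w) := y(T, a)(w) = \Pi_T \circ \Phi_a \circ \mathcal{S}_{\alpha\textrm{-Hld}}(w)$, where $\Pi_T$ denotes evaluation at time $T$. By Corollary \ref{cor.0904}, the map $w \mapsto \Phi_a \circ \mathcal{S}_{\alpha\textrm{-Hld}}(w) \in \tilde\cC^{\alpha\textrm{-Hld}}_a(\R^e)$ is $\infty$-quasi-continuous, and since $\Pi_T$ is continuous, $\tilde F := \Pi_T \circ \Phi_a \circ \mathcal{S}_{\alpha\textrm{-Hld}}$ is an $\infty$-quasi-continuous modification of $F$. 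Let $\nu_b$ denote the finite Borel measure on $\cW$ attached to $\delta_b(F)$ by Sugita's theorem; by construction $\mu^H_{a,b} = p(T,a,b)^{-1}\nu_b$. Lemma \ref{lem.jigoku2} then gives $\nu_b(\{\tilde F \neq b\}) = 0$. Pushing forward by $\Phi_a \circ \mathcal{S}_{\alpha\textrm{-Hld}}$ shows that $\mathbb{Q}^H_{a,b}$ is supported on paths $z \in \tilde\cC^{\alpha\textrm{-Hld}}_a(\R^e)$ with $z_T = b$, i.e.\ on $\tilde\cC^{\alpha\textrm{-Hld}}_{a,b}(\R^e)$.

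For \eqref{eq.0510_1}, take $G := g(y_{t_1}, \ldots, y_{t_{N-1}}, y_T) \in \mathbf{D}_\infty$ (legitimate for sufficiently regular $g$ since each $y_{t_i} \in \mathbf{D}_\infty(\R^e)$). Applying Sugita's formula \eqref{eq.sgt_formula} with $\eta = \delta_b(F)$ gives
\[
\mathbb{E}\bigl[g(y_{t_1}, \ldots, y_T)\, \delta_b(y_T)\bigr]
= \langle \delta_b(F), G\rangle
= \int_{\cW} \tilde G(w)\, \nu_b(dw).
\]
Writing $\tilde y_t := \Pi_t \circ \Phi_a \circ \mathcal{S}_{\alpha\textrm{-Hld}}$, a natural $\infty$-quasi-continuous modification of $G$ is $\tilde G(w) = g(\tilde y_{t_1}(w), \ldots, \tilde y_{t_{N-1}}(w), \tilde y_T(w))$, obtained by composing the quasi-continuous path-valued map with the continuous function $g$. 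Restricted to the support $\{\tilde F = b\}$ of $\nu_b$, this reduces to $g(\tilde y_{t_1}(w), \ldots, \tilde y_{t_{N-1}}(w), b)$. Inserting this form and dividing by $p(T,a,b)$ (so that $\nu_b$ becomes $\mu^H_{a,b}$) yields the first equality of \eqref{eq.0510_1}; the second equality is the push-forward change-of-variables for $\mathbb{Q}^H_{a,b} = (\Phi_a \circ \mathcal{S}_{\alpha\textrm{-Hld}})_* \mu^H_{a,b}$ applied to the bounded measurable test function $z \mapsto g(z_{t_1}, \ldots, z_{t_{N-1}}, b)$.

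The only genuinely non-routine point I anticipate is justifying that $g(\tilde y_{t_1}, \ldots, \tilde y_T)$ is an honest $\infty$-quasi-continuous modification of $G$, so that it may be inserted into Sugita's formula without ambiguity. This follows because each $\tilde y_{t_i}$ agrees with $y_{t_i}$ off a slim set, continuity of $g$ propagates the quasi-continuity through the composition, and the union of finitely many slim sets is slim. Once this is in place the remaining steps are purely formal: Sugita's identity, Lemma \ref{lem.jigoku2}, and the definition of push-forward.
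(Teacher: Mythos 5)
Your proposal is correct and takes essentially the same route as the paper: $\infty$-quasi-continuity of $y_T$ (via Corollary \ref{cor.0904}), Lemma \ref{lem.jigoku2} for concentration of $\mu^H_{a,b}$ on the level set $\{\tilde{F}=b\}$, and Sugita's formula \eqref{eq.sgt_formula} plus push-forward to obtain \eqref{eq.0510_1}. The paper's proof is extremely terse at exactly the point you flag as ``the only genuinely non-routine point'' --- namely that the $\infty$-quasi-continuous modification to be inserted into Sugita's formula is $g(\tilde{y}_{t_1},\ldots,\tilde{y}_T)$ and that on $\{\tilde{F}=b\}$ one may replace $\tilde{y}_T$ by $b$; your write-up supplies that bookkeeping explicitly, which is a helpful elaboration rather than a deviation.
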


\begin{proof} 
Since $y_T (w)$ is $\infty$-quasi-continuous in $w$, 
we can see from Lemma \ref{lem.jigoku2} that 
\[
\mathbb{Q}^H_{a,b} (\{z\mid z_T \neq b\})
=\mu^H_{a,b} (\{ w\mid y_T (w)\neq b\})
=0.
\]
Then, \eqref{eq.0510_1} follows immediately from Sugita's theorem 
and the definition of $\mu^H_{a,b}$  and $\mathbb{Q}^H_{a,b}$.
\end{proof}

For a topological space $\cY$, we denote the Borel $\sigma$-field
of $\cY$ by $\mathcal{B} (\cY)$
and the set of real-valued, bounded, Borel measurable functions on $\cY$
by $\cM_{\mathrm{b}} (\cY)$.

\begin{theorem} \label{thm.0510_2}
Let $H \in (1/4, 1/2]$ and $(\lfloor 1/H\rfloor+1)^{-1} < \alpha <H$.
Assume that 
$y_T =y (T,a)$ is non-degenerate in the sense of Malliavin.
Then, the family 
\[
\{ \mathbb{Q}^H_{a,b} (A) \mid A\in 
\mathcal{B} (\tilde{\cC}^{\alpha\textrm{-}{\rm Hld}}_{a} (\R^e)),\,  b\in \R^e\}
\]
is (a version of) the regular conditional probability 
of $\mathbb{Q}^H_{a}$ on $\tilde{\cC}^{\alpha\textrm{-}{\rm Hld}}_{a} (\R^e)$
given the evaluation map at the time $T$.
\end{theorem}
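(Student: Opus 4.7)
The plan is to reduce the theorem to the pullback identity \eqref{eq.0510_1} of Lemma \ref{lem.0510_1}, combined with \eqref{eq.1008-1}, and then propagate the resulting disintegration from smooth cylindrical functionals to arbitrary Borel sets of path space via a monotone class argument. First I would verify that the map $b\mapsto \mathbb{Q}^H_{a,b}(A)$ is Borel measurable for every $A\in\mathcal{B}(\tilde{\cC}^{\alpha\textrm{-Hld}}_a(\R^e))$. For a cylindrical $A=\{z:(z_{t_1},\ldots,z_{t_{N-1}})\in B\}$ with $B$ Borel in $(\R^e)^{N-1}$, the formula \eqref{eq.0510_1} expresses $\mathbb{Q}^H_{a,b}(A)\cdot p(T,a,b)$ as $\mathbb{E}[\mathbf{1}_B(y_{t_1},\ldots,y_{t_{N-1}})\delta_b(y_T)]$, which by \eqref{conti1} and a standard monotone class extension is Borel in $b$; dividing by $p(T,a,b)$ (continuous, positive on an open set) gives measurability, and a Dynkin argument extends it to all Borel $A$ since cylinder sets form a $\pi$-system generating $\mathcal{B}(\tilde{\cC}^{\alpha\textrm{-Hld}}_a(\R^e))$.

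Next I would establish the disintegration identity on smooth cylindrical test functionals. Fix $0<t_1<\cdots<t_{N-1}<T$ and $g\in C_{\rm b}^\infty((\R^e)^N)$; since $V_j\in C_{\rm b}^\infty$ we have $y_{t_i}\in\mathbf{D}_\infty(\R^e)$, hence $G:=g(y_{t_1},\ldots,y_{t_{N-1}},y_T)\in\mathbf{D}_\infty$. Applying \eqref{eq.1008-1} with $F=y_T$ and this $G$, and then inserting \eqref{eq.0510_1} on the right, gives, for every $\chi\in\mathscr{S}(\R^e)$,
\begin{equation*}
\mathbb{E}[\chi(y_T)\,G]
= \int_{\R^e}\chi(b)\,p(T,a,b)\!\left(\int_{\tilde{\cC}^{\alpha\textrm{-Hld}}_a(\R^e)}\! g(z_{t_1},\ldots,z_{t_{N-1}},b)\,\mathbb{Q}^H_{a,b}(dz)\right)db.
\end{equation*}
Both sides are bounded linear functionals of $\chi$ in the sup-norm (using that $|\int g\, d\mathbb{Q}^H_{a,b}|\le\|g\|_\infty$ and that $p(T,a,\cdot)$ is the density of $(y_T)_*\mu^H$), so by a standard density/bounded-convergence argument the identity extends from $\chi\in\mathscr{S}(\R^e)$ to all bounded Borel $f$ on $\R^e$. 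Replacing $\chi$ by $f$ and running a monotone class argument on $g$ (first passing from $C_{\rm b}^\infty$ to bounded Borel functions on $(\R^e)^N$, then from cylindrical indicators to arbitrary $A\in\mathcal{B}(\tilde{\cC}^{\alpha\textrm{-Hld}}_a(\R^e))$ using that this Borel $\sigma$-field is generated by finite-dimensional cylinders), yields
\begin{equation*}
\mathbb{E}\bigl[f(y_T)\mathbf{1}_A(y(\,\cdot\,,a))\bigr]
= \int_{\R^e} f(b)\,\mathbb{Q}^H_{a,b}(A)\,p(T,a,b)\,db
\end{equation*}
for every bounded Borel $f$ and every $A\in\mathcal{B}(\tilde{\cC}^{\alpha\textrm{-Hld}}_a(\R^e))$. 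Since $(y_T)_*\mu^H=p(T,a,b)\,db$, this is precisely the defining property of a regular conditional probability, and the convention $\mathbb{Q}^H_{a,b}=\delta_{\ell(a,b)}$ on $\{b:p(T,a,b)=0\}$ is harmless as the marginal law of $y_T$ does not charge that set.

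The main obstacle will be Step 2's careful interplay between the Watanabe distribution $\delta_b(y_T)$ (a purely analytic object, not a pointwise conditioning) and the measure-theoretic statement we want: the identity \eqref{eq.0510_1} a priori holds only with cylindrical $g$, whose terminal coordinate $b$ has been substituted as a fixed value, so extending it first in the variable $b$ (passing from $\chi\in\mathscr{S}$ to bounded Borel $f$) and then promoting the cylindrical $g$ to an indicator of a general Borel set in path space requires the monotone class scheme to be run in two separate variables in the correct order. A secondary technical point is to justify that the $b$-integrand is well-defined outside a set of Lebesgue measure zero, which is ensured by the density property $p(T,a,b)={\mathbb E}[\delta_b(y_T)]$ being bounded and continuous (cf.\ the discussion preceding \eqref{conti1}) together with Lemma \ref{lem.jigoku2}, which guarantees that $\mathbb{Q}^H_{a,b}$ indeed lives on $\tilde{\cC}^{\alpha\textrm{-Hld}}_{a,b}(\R^e)$.
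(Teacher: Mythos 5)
Your proposal is correct and follows essentially the same route as the paper: both reduce the disintegration identity to \eqref{eq.0510_1} and \eqref{eq.1008-1}, use continuity of $b\mapsto\delta_b(y_T)$ to establish measurability and continuity of $b\mapsto\int G\,d\mathbb{Q}^H_{a,b}$ on $\{p(T,a,\cdot)>0\}$, and run a two-variable monotone class scheme to pass from smooth cylinder functionals to general Borel sets in path space and general bounded Borel test functions on $\R^e$. The only nontrivial difference is the order of the two monotone class extensions (you first extend $\chi\mapsto f$ then $g\mapsto A$, the paper does the reverse), which is immaterial; the paper is also slightly more explicit that the passage from $\mathscr{S}(\R^e)$ to bounded Borel $f$ requires a dominated-convergence/monotone-class argument rather than sup-norm density, a subtlety your ``density/bounded-convergence'' phrase glosses over but correctly gestures at.
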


\begin{proof} 
In this proof, we write $\cY: =\tilde{\cC}^\alpha_{a} (\R^e)$
for simplicity
and denote the evaluation map at the time $s$ by $\Pi_s\colon \cY \to \R^e$, i.e. 
$\Pi_s (z) = z_s$ for $z\in\cY$.
Since balls in $\cY$ belong
to $\sigma \{\Pi_s \mid 0\le s\le T \}$ and $\cY$
is Polish, we can easily see that 
$\mathcal{B} (\cY )=\sigma \{\Pi_s \mid 0\le s\le T \}$.
We write $U := \{ b \in \R^e \mid p(T, a, b) >0\}$, which is obviously open.

We say that $G \colon\cY\to \R$
is a $C^\infty_{\mathrm{b}}$-cylinder function if there are 
a partition $\{0=t_0 <t_1 <\cdots < t_N = T\}$ and 
$g \in C^\infty_{\mathrm{b}}((\R^e)^N)$ such that 
$G (z) = g (z_{t_1}, \ldots, z_{t_{N-1}}, z_{t_N})$ for all $z\in \cY$.
We denote by $\mathcal{F} C^\infty_{\mathrm{b}}$ the totality 
of $C^\infty_{\mathrm{b}}$-cylinder functions.
Since $\mathcal{F} C^\infty_{\mathrm{b}}$ is an algebra,
it is a routine to check that 
that $\sigma (\mathcal{F} C^\infty_{\mathrm{b}})$
(i.e. the smallest $\sigma$-field with respect to which 
all $G\in \mathcal{F} C^\infty_{\mathrm{b}}$ is measurable)
coincides with $\mathcal{B} (\cY)$.

First, we will show that $\R^e \ni b \mapsto \mathbb{Q}^H_{a, b} (A)$
is Borel measurable for every $A \in \mathcal{B} (\cY)$.
For that purpose, it suffices to show that 
\[
\cK := \{F \in \cM_{\mathrm{b}} (\cY)
\mid
\mbox{$\R^e \ni b \mapsto \int  F\, d\mathbb{Q}^H_{a,b} \in \R$ is Borel measurable}
\}
\]
coincides with $\cM_{\mathrm{b}} (\cY)$.
Clearly, $\cK$ contains all constant functions.
Moreover, $\cK$ is a monotone class in the sense that, 
if $\{F_n\}_{n\in\N} \subset \cK$ is an non-decreasing 
and uniformly bounded sequence, then the pointwise limit
$\lim_{n\to\infty} F_n \in \cK$.
($\cK$ is also closed under uniform convergence.)
Due to the monotone class theorem for functions, 
it remains to check $\mathcal{F} C^\infty_{\mathrm{b}}\subset \cK$.

Pick any $G \in  \mathcal{F} C^\infty_{\mathrm{b}}$.
We may assume that $G$ is of the above form. 
We see from \eqref{eq.0510_1} that 
\[
\int_{\cY}  G(z) \mathbb{Q}^H_{a,b} (dz)
=
p(T, a, b)^{-1}  {\mathbb E} \left[
g (y_{t_1}, \ldots, y_{t_{N-1}}, y_T) \delta_b (y_T)
\right]
\]
if $b \in U$. Since $b \mapsto \delta_b \in 
\mathscr{S}^{\prime}({\mathbb R}^e)$ is continuous, $b \mapsto \int G \, d\mathbb{Q}^H_{a,b}$ is 
continuous on $U$.
If $b \notin U$, $ \int G \, d\mathbb{Q}^H_{a,b} =G (\ell(a,b))$.
Since $b \mapsto \ell(a,b) \in \cY$ is continuous,
the restriction of 
the map $b \mapsto \int G \, d\mathbb{Q}^H_{a,b}$ to $U^c$ is
also continuous. Thus, we have shown $G \in \cK$.

To finish the proof, we will show that
\[
\mathbb{Q}^H_{a} (A \cap \Pi_T^{-1} (E) )
=
\int_{E}    \mathbb{Q}^H_{a, b} (A)  \, [(\Pi_T)_* \mathbb{Q}^H_{a}] (db),
\quad
A \in \mathcal{B} (\cY), 
E \in \mathcal{B} (\R^e).
\]
Since $[(\Pi_T)_* \mathbb{Q}^H_{a}] (db) = p(T, a, b) db$
equals the law of $y(T, a)$, the above condition reads: 
\begin{equation} \label{eq.0511-2}
\mathbb{E} [\mathbf{1}_A (y ) \,   \mathbf{1}_E (y_T)]
=
\int_{\R^e}  \left( \int_{\cY} \mathbf{1}_A (z)
 \mathbb{Q}^H_{a, b} (dz)  \right)  
 \mathbf{1}_E (b) p(T, a, b) db
\end{equation}
for every 
$A \in \mathcal{B} (\cY)$ and $E \in \mathcal{B} (\R^e)$.
(Since $p(T, a, \,\cdot\,) \equiv 0$ on $U^c$, 
how $\mathbb{Q}^H_{a, b}$ is defined for $b\notin U$ is actually  irrelevant.)

Pick any $\chi \in \mathscr{S} ({\mathbb R}^e)$ and set 
\[
\hat\cK (\chi) := \{F \in \cM_{\mathrm{b}} (\cY)
\mid
\text{Eq. \eqref{eq.0511-3} below holds}
\},
\]
where 
\begin{equation} \label{eq.0511-3}
\mathbb{E} [ F(y ) \,   \chi (y_T)]
=
\int_{\R^e}  \left( \int_{\cY} F (z) 
\mathbb{Q}^H_{a, b} (dz)  \right)   \chi(b) p(T, a, b) db.
\end{equation} 

Pick any $G \in  \mathcal{F} C^\infty_{\mathrm{b}}$.
We can easily see from \eqref{eq.1008-1} and \eqref{eq.0510_1} that 
\begin{align*}  
\mathbb{E} [ G(y ) \,   \chi (y_T)] 
&=
\int_{\R^e} 
\mathbb{E} [ G(y ) \,   \delta_b (y_T)] \, \chi(b)db
\\
&=
\int_{\R^e} 
\left(
p(T, a, b)
\int_{\cY}  G(z)
\mathbb{Q}^H_{a,b} (dz)
\right) \chi(b)db.
\end{align*}
%
%
%
Thus, we have seen
$\mathcal{F} C^\infty_{\mathrm{b}}\subset \hat\cK (\chi)$.
By using the monotone class theorem again, 
we obtain $\hat\cK (\chi)= \mathcal{B} (\cY)$.

Next, for an arbitrary $A \in \mathcal{B} (\cY)$,
we set 
\[
\cN (A) := \{\xi \in \cM_{\mathrm{b}} (\R^e)
\mid
\text{Eq. \eqref{eq.0511-4} below holds}
\}.
\]
Here, 
\begin{equation} \label{eq.0511-4}
\mathbb{E} [ \mathbf{1}_A (y ) \,   \xi (y_T)]
=
\int_{\R^e}  \left( \int_{\cY} \mathbf{1}_A (z) 
\mathbb{Q}^H_{a, b} (dz)  \right)   \xi(b) p(T, a, b) db.
\end{equation} 
We have already seen that $\mathscr{S} ({\mathbb R}^e)\subset \cN (A)$.
By the dominated convergence theorem, the constant function $\mathbf{1}$ on $U$ also belongs to $\cM (A)$.
By using the monotone class theorem again, 
we obtain $\cM_{\mathrm{b}} (\R^e) = \cN (A)$.
Thus, we have shown \eqref{eq.0511-2}.
This completes the proof of Theorem \ref{thm.0510_2}.
\end{proof}

\begin{remark}\label{rem.RCPD}
It should be noted that the conditional distribution
$\mathbb{Q}^H_{a, b}$ is defined 
for {\it every fixed} $b$ with $p (T, a, b) >0$, thanks to Malliavin 
calculus and quasi-sure analysis.
Recall that the regular conditional probability
given a  random variable is merely an {\it equivalence class} with 
respect to the law of the random variable.
Therefore, if one only uses the general theory of regular conditional probability given a measurable map, 
one could not construct $\mathbb{Q}^H_{a, b}$ for a fixed $b$ because 
the singleton $\{b\}$ is a zero set with respect to the law of $y (T, a)$,
which equals $p(T, a, b) db$.
\end{remark}

\begin{remark}\label{rem.positive}
Let the notation be as above and assume that 
$y_T =y (T,a)$ is non-degenerate in the sense of Malliavin.
It is known that $p(T, a, b)>0$ if and only if there exists $h\in \cH^H$
satisfying the following two conditions:
\begin{enumerate} 
\item[(1)]~$\Psi_a (h)_T =b$.
\item[(2)]~$D\Psi_a (h)_T$ is a surjective linear map from 
$\cH^H$ to $\R^e$.
Here, $D\Psi_a (h)_T$ stands for the Fr\'echet derivative of 
$k \in \cH^H \mapsto \Psi_a (k)_T\in \R^e$ at $h\in \cH^H$.
\end{enumerate}
Recall that $\Psi_a (h)\,(= \psi^h)$ is the unique solution of 
the skeleton ODE \eqref{ode2}. (See \cite{ip}  for this fact.)
Note that Condition (2) is equivalent to the invertibility of 
 the deterministic Malliavin 
covariance of $\Psi_a (\,\cdot\,)_T$ at $h$.
\end{remark}
%
%


In the following remark,
we recall a famous sufficient conditions 
for non-degeneracy of the solution 
$(y_t)_{t\in [0,T]} =(y(t, a))_{t\in [0,T]}$ of RDE \eqref{rde1}.  
\begin{remark}\label{rem.nondeg}
Let $V_j~(j\in \llbracket 0, d\rrbracket)$ 
be the coefficient vector fields of RDE \eqref{rde1}.
Here, they are viewed as  first-order differential operators on $\R^e$.
Set $\Sigma_1 =\{V_1, \ldots, V_d\}$ and, recursively,
$\Sigma_k =\{ [Z, V_i] \mid  Z\in \Sigma_{k-1},  0\le i \le d\}$
for $k\ge 2$. 
If $\{ Z(a) \mid Z \in \cup_{k=1}^\infty\Sigma_k \}$ linearly spans $\R^e$
at the starting point $a$, then for all $t \in (0, T]$, $y (t,a)$ is non-degenerate in the sense of Malliavin.
This fact was first shown in \cite{chlt} (see also \cite{got1}).
\end{remark}
%
%
%
%
\subsection{Brownian Case}

In this subsection, we consider the Brownian case,
i.e. the case $H=1/2$, and check that $\mathbb{Q}^{1/2}_{a,b}$
actually coincides with the classical diffusion bridge measure
under mild assumptions.
In this subsection we will write $\mathbb{Q}_{a}:=\mathbb{Q}^{1/2}_{a}$ and $\mathbb{Q}_{a,b}:=\mathbb{Q}^{1/2}_{a,b}$ for simplicity.

In this case, the solution $y=(y_t)$ of \eqref{rde2} coincides with 
the corresponding Stratonovich SDE: 
\begin{equation}\label{sde8}
dy_t = \sum_{j=1}^d V_j (y_t)\circ dw^{j}_t  + V_0 (y_t) dt,
\qquad 
y_0 =a\in \R^e.
\end{equation}
Here, $\circ dw^{j}_t$ stands for the Stratonovich stochastic integral 
with respect to the usual Brownian motion.
Since we let $a$ vary, we will often write $y_t =y (t,a)$.

Throughout this subsection we assume that 
\begin{itemize}
\item
$V_j$ are of $C_{{\rm b}}^\infty$ $(j\in \llbracket 0, d\rrbracket)$.
\item
H\"ormander's bracket-generating condition as in Remark \ref{rem.nondeg} is satisfied at every $a \in \R^e$.
\end{itemize}
Under these two conditions, SDE \eqref{sde8} and the associated 
density function has been extensively studied.
The solution $y (t,a)$ is non-degenerate in the sense of Malliavin for every $a \in \R^e$ and $t\in (0,T]$.
The density of the law of $y (t,a)$ by $p(t, a, b)$
as in \eqref{def.mitsudo}.
As is well-known, $\{ y (\cdot,a) \mid a\in\R^e\}$ 
(or $\{ \mathbb{Q}_{a} \mid a\in\R^e\}$) is a 
time-homogeneous diffusion 
process with generator $V_0 + (1/2)\sum_{j=1}^d V_j^2$
and transition density function $p$.

When $p(T, a, b)>0$,
the associated diffusion bridge measure (i.e. pinned diffusion measure) from $a$ to $b$
in the classical sense is denoted by $\mathbb{U}_{a,b}$, which 
is a unique Borel probability measure on 
$\tilde{\cC}^{\alpha\textrm{-}{\rm Hld}}_{a} (\R^e)$~$(0<\alpha <1/2)$
satisfying Condition \eqref{def.CK} below:

For every rational partition $\{0=t_0 <t_1 <\cdots < t_N =T\}$ 
\footnote{
The partition $\{0=t_0 <t_1 <\cdots < t_N =T\}$  is called rational
if $t_i$ is rational for all $1\le i \le N-1$.
}
and every $g_i \in C_{{\rm K}}^\infty (\R^e)$  ($1\le i \le N$),
it holds that
\begin{align}  \label{def.CK}
\lefteqn{
\int  \prod_{i=1}^{N} g_i (z_{t_i})\mathbb{U}_{a,b} (dz)
}
\nn\\
&=
p(T, a, b)^{-1} \int_{(\R^e)^{N-1}} 
\prod_{i=1}^{N-1} g_i (x_{i}) g_N (b)
\prod_{i=1}^{N} p(t_{i}- t_{i-1}, x_{i-1}, x_{i}) 
\prod_{i=1}^{N-1} dx_i.
\end{align}
Here, we set $x_0 :=a$, $x_N :=b$ and 
the integral on the left hand side is done on $\tilde{\cC}^{\alpha\textrm{-}{\rm Hld}}_{a} (\R^e)$. 
As is well-known, $\mathbb{U}_{a,b}$ is supported on $\tilde{\cC}^{\alpha\textrm{-}{\rm Hld}}_{a, b} (\R^e)$.

When $p(T, a, b)=0$, we set $\mathbb{U}_{a,b} = \mathbb{Q}_{a,b}$
for convenience. 
(See the paragraph just above Lemma \ref{lem.0510_1}.
However, this definition is basically irrelevant.)


\begin{proposition} \label{prop.pdp}
Let the situation be as above.
Then, for every fixed $a\in \R^e$, 
we have $\mathbb{Q}_{a,b} = \mathbb{U}_{a,b}$ for 
almost all $b$ with respect to the law of $y (T, a)$.
\end{proposition}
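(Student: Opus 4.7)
The plan is to show that both $\{\mathbb{Q}_{a,b}\}_{b\in\R^e}$ and $\{\mathbb{U}_{a,b}\}_{b\in\R^e}$ serve as regular conditional probabilities of $\mathbb{Q}_a$ given the evaluation map $\Pi_T$ at the terminal time, and then invoke uniqueness of regular conditional probability on the Polish space $\tilde\cC^{\alpha\textrm{-Hld}}_a(\R^e)$: the two versions must then agree outside a null set with respect to $(\Pi_T)_*\mathbb{Q}_a = p(T,a,b)\,db$, which is precisely the claim. For $\{\mathbb{Q}_{a,b}\}$ this is Theorem \ref{thm.0510_2} applied with $H=1/2$, noting that the H\"ormander hypothesis ensures non-degeneracy of $y(T,a)$ by Remark \ref{rem.nondeg}.

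The work is therefore to verify that $\{\mathbb{U}_{a,b}\}$ is also a regular conditional probability: this requires measurability of $b\mapsto \mathbb{U}_{a,b}(A)$ for every Borel $A$, together with the identity
\[
\mathbb{E}_{\mathbb{Q}_a}\bigl[ \mathbf{1}_A(z)\,\mathbf{1}_E(z_T) \bigr] = \int_E \mathbb{U}_{a,b}(A)\, p(T,a,b)\, db
\qquad (A\in \mathcal{B}(\tilde\cC^{\alpha\textrm{-Hld}}_a(\R^e)),\ E\in\mathcal{B}(\R^e)).
\]
By a monotone class argument of the same type used in the proof of Theorem \ref{thm.0510_2}, both reduce to the case where $A$ is a cylinder set $\{z:(z_{t_1},\ldots,z_{t_{N-1}})\in B\}$ along a rational partition $0=t_0 < t_1<\cdots<t_{N-1}<t_N=T$, since the Borel $\sigma$-field on this Polish path space is generated by rational cylinder sets.

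The key computation is a direct application of the Markov property of the Stratonovich SDE \eqref{sde8} under $\mathbb{Q}_a$: setting $x_0 := a$, the left-hand side becomes
\[
\int_E \int_B \prod_{i=1}^{N-1} p(t_i-t_{i-1},x_{i-1},x_i)\, p(T-t_{N-1},x_{N-1},b)\, dx_1\cdots dx_{N-1}\, db,
\]
while unfolding the definition \eqref{def.CK} of $\mathbb{U}_{a,b}$ with the terminal coordinate frozen at $b$ produces exactly the same iterated integral after the factor $p(T,a,b)^{-1}$ in \eqref{def.CK} cancels against $p(T,a,b)$ on the right-hand side of the identity above. Measurability of $b\mapsto\mathbb{U}_{a,b}(A)$ for cylinder $A$ follows from joint continuity of $p$ on $(0,T]\times\R^e\times\R^e$.

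I do not expect a substantial obstacle. The only ingredients beyond the Markov property computation are (a) the classical fact that \eqref{def.CK} does define a Borel probability measure concentrated on $\tilde\cC^{\alpha\textrm{-Hld}}_a(\R^e)$, via Kolmogorov consistency together with H\"older regularity estimates for the pinned diffusion, and (b) the monotone class bookkeeping to pass from rational cylinder indicators to all Borel sets. Once these are in place, uniqueness of the regular conditional probability of $\mathbb{Q}_a$ given $\Pi_T$ delivers $\mathbb{Q}_{a,b}=\mathbb{U}_{a,b}$ for $p(T,a,b)\,db$-a.e.\ $b$.
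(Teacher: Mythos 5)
Your proposal follows essentially the same route as the paper: both reduce the claim to the statement that $\{\mathbb{U}_{a,b}\}_b$ is (a version of) the regular conditional probability of $\mathbb{Q}_a$ given $\Pi_T$, invoking Theorem~\ref{thm.0510_2} for $\{\mathbb{Q}_{a,b}\}_b$ and uniqueness of the RCP, and verify the finite-dimensional identity via the Markov property/Chapman--Kolmogorov computation over rational partitions. The paper phrases it slightly differently --- introducing the abstract RCP $\mathbb{V}_{a,b}$ and showing its finite-dimensional distributions satisfy~\eqref{def.CK} for a.e.\ $b$, which sidesteps the need to verify measurability of $b\mapsto\mathbb{U}_{a,b}(A)$ directly --- but the substance is the same.
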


\begin{proof} 
In view of Theorem \ref{thm.0510_2} and uniqueness 
of regular conditional probability, it suffices to check that 
$
\{ \mathbb{U}_{a,b} (A) \mid A\in 
\mathcal{B} (\tilde{\cC}^{\alpha\textrm{-}{\rm Hld}}_{a} (\R^e)),\,  b\in \R^e\}
$
is also (a version of) the regular conditional probability 
of $\mathbb{Q}_{a}$ on $\tilde{\cC}^{\alpha\textrm{-}{\rm Hld}}_{a} (\R^e)$ given the evaluation map $\Pi_T$ at the time $T$.
Denote by $\mathbb{V}_{a,b}:=\mathbb{Q}_{a} \left( \,\cdot\, \middle|  \Pi_T =b \right)$ the above-mentioned regular conditional probability.
It suffices to show that, for almost all $b$,
$\mathbb{V}_{a,b}$ satisfies Condition \eqref{def.CK} (with $\mathbb{U}_{a,b}$ being replaced by $\mathbb{V}_{a,b}$.)

Take an arbitrary 
rational partition $\{0=t_0 <t_1 <\cdots < t_N =T\}$ and fix it. 
The projection 
$z \mapsto (z_{t_1}, \ldots, z_{t_{N-1}}, z_{t_N})$
from $\cY: =\tilde{\cC}^\alpha_{a} (\R^e)$ to $(\R^e)^N$
is denoted by $\Sigma$.
A generic element of $(\R^e)^N$ is denoted by 
${\bf x}=(x_{1}, \ldots, x_{N-1}, x_{N})$.
One can easily see from the definition of regular conditional probability that 
\[
\Sigma_* \mathbb{V}_{a,b}
=(\Sigma_* \mathbb{Q}_{a}) \left( \,\cdot\, \middle|  x_N =b \right) 
\qquad
\mbox{for almost all $b$.}
\]
Hence, we have only to calculate the right hand side.

As is well-known,
the finite dimensional distribution $\Sigma_* \mathbb{Q}_{a}$
on $(\R^e)^N$ is given by 
\[
\Sigma_* \mathbb{Q}_{a} (d{\bf x}) 
=
\prod_{i=1}^{N} p(t_{i}- t_{i-1}, x_{i-1}, x_{i}) 
\prod_{i=1}^{N} dx_i.
\]
Here and below, we write $x_0:=a$.
From this, one can easily see that 
\begin{align*}
\lefteqn{
(\Sigma_* \mathbb{Q}_{a}) \left( \,\cdot\, \middle|  x_N =b \right) 
}
\\
&=
p(T, a, b)^{-1}
\prod_{i=1}^{N-1} p(t_{i}- t_{i-1}, x_{i-1}, x_{i}) 
p(t_{N}- t_{N-1}, x_{i-1}, b) 
\prod_{i=1}^{N-1} dx_i \delta_b (dx_N)
\end{align*}
for almost all $b$ with $p(T, a, b) >0$.
(Note that $\{b\mid p(T, a, b)=0\}$ is a zero set with respect to
the law of $y(T,a)$, which equals $p(T, a, b)db$, and is 
therefore irrelevant.)
Therefore, 
for such $b$, $(\Sigma_* \mathbb{Q}_{a}) \left( \,\cdot\, \middle|  x_N =b \right)$ satisfies \eqref{def.CK} 
with $\mathbb{U}_{a,b}$ being replaced by this probability measure.

Since  there are countably many rational partitions, 
we have $\mathbb{U}_{a,b} = \mathbb{V}_{a,b}$ for 
almost all $b$. This completes the proof.
\end{proof}

Under the same condition of the above proposition, 
we have the following corollary, which is the main aim of this subsection.
\begin{corollary} \nn
For every $a, b \in\R^e$ with $p (T,a,b) >0$, we have $\mathbb{Q}_{a,b} = \mathbb{U}_{a,b}$.
\end{corollary}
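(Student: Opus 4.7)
The plan is to upgrade the ``almost every $b$'' statement of Proposition~\ref{prop.pdp} to ``every $b$ with $p(T,a,b)>0$'' by showing that both $b \mapsto \mathbb{Q}_{a,b}$ and $b \mapsto \mathbb{U}_{a,b}$ are weakly continuous in $b$ on the open set $U := \{b \in \R^e \mid p(T,a,b) > 0\}$. Since the law of $y(T,a)$ is $p(T,a,\cdot)\,db$, which is equivalent to Lebesgue measure on $U$, the equality in Proposition~\ref{prop.pdp} holds on a dense subset of $U$. Weak continuity will then force equality on all of $U$.

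First I would fix a rational partition $\{0=t_0<\cdots<t_N=T\}$ and a test function $G(z) = g(z_{t_1},\ldots,z_{t_N})$ with $g \in C_{\mathrm{K}}^\infty((\R^e)^N)$. On the $\mathbb{Q}$-side, the identity
\[
\int_{\cY} G(z)\,\mathbb{Q}_{a,b}(dz) = p(T,a,b)^{-1}\,\mathbb{E}\bigl[g(y_{t_1},\ldots,y_{t_{N-1}},y_T)\,\delta_b(y_T)\bigr]
\]
(which is Lemma~\ref{lem.0510_1}) shows continuity in $b$ on $U$: the denominator is continuous and nonzero on $U$, and the numerator is continuous by the continuity of $b \mapsto \delta_b \in \mathscr{S}_{-2l}'(\R^e)$ combined with the continuity of Watanabe's pullback (see \eqref{conti1} and Remark~\ref{rem.SnoISO}). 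On the $\mathbb{U}$-side, the defining Chapman--Kolmogorov formula \eqref{def.CK} expresses $\int G\,d\mathbb{U}_{a,b}$ as $p(T,a,b)^{-1}$ times an iterated integral of products $p(t_i-t_{i-1},x_{i-1},x_i)$; since $p$ is jointly continuous in its spatial arguments (under the H\"ormander-type assumption, $p \in C^\infty$ away from the diagonal at positive times), the dominated convergence theorem yields continuity in $b$ on $U$.

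Next I would combine these two continuity statements with Proposition~\ref{prop.pdp}: the set of $b \in U$ on which $\int G\,d\mathbb{Q}_{a,b} = \int G\,d\mathbb{U}_{a,b}$ contains a set of full Lebesgue measure in $U$, hence is dense in $U$, and by continuity the equality extends to every $b \in U$. Varying $G$ over the algebra $\mathcal{F}C_{\mathrm{b}}^\infty$ of cylinder functions (using countably many rational partitions and a countable dense subset of $C_{\mathrm{K}}^\infty((\R^e)^N)$ for each), we obtain equality of the two Borel probability measures on the $\sigma$-field generated by this algebra, which, as noted in the proof of Theorem~\ref{thm.0510_2}, coincides with $\mathcal{B}(\tilde{\cC}^{\alpha\textrm{-Hld}}_a(\R^e))$.

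The main obstacle, if any, is verifying the continuity of $b \mapsto \int G\,d\mathbb{U}_{a,b}$ cleanly: one must invoke the regularity of the transition density $p$ for the hypoelliptic diffusion (which is standard once H\"ormander's condition is imposed, as assumed in this subsection), and check that the integrand in \eqref{def.CK} admits a dominating integrable majorant uniformly in $b$ varying in a small neighborhood inside $U$. The latter is straightforward since $g$ has compact support and $p(T,a,\cdot)$ is bounded away from zero locally on $U$. Everything else is routine application of the monotone class / density argument already used in the proof of Theorem~\ref{thm.0510_2}.
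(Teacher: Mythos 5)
Your proof is correct and follows essentially the same route as the paper's: use Proposition~\ref{prop.pdp} to get density in $U=\{b\mid p(T,a,b)>0\}$, then upgrade by showing continuity in $b$ of $\int G\,d\mathbb{Q}_{a,b}$ via continuity of $b\mapsto\delta_b(y_T)\in\mathbf{D}_{p,-2l}$ and of $\int G\,d\mathbb{U}_{a,b}$ via continuity of $p$ and dominated convergence. The paper concludes slightly more directly by noting that $\mathbb{Q}_{a,b}$ then satisfies the defining Chapman--Kolmogorov relation \eqref{def.CK}, so equality follows from the uniqueness built into the definition of $\mathbb{U}_{a,b}$, rather than re-running a monotone class argument; but these are two phrasings of the same fact.
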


\begin{proof} 
We use Proposition \ref{prop.pdp} for every fixed $a\in\R^e$.
We already know that the set of $b$ satisfying
$\mathbb{Q}_{a,b} = \mathbb{U}_{a,b}$ is a dense subset of the 
open set $\{b\mid p (T,a,b) >0\}$.  For such $b$, we have 
\begin{align}\label{eq.CK2}
\lefteqn{
\int  \prod_{i=1}^{N} g_i (z_{t_i})\mathbb{Q}_{a,b} (dz)
}
\nn\\
&=
p(T, a, b)^{-1} \int_{(\R^e)^{N-1}} 
\prod_{i=1}^{N-1} g_i (x_{i}) g_N (b)
\prod_{i=1}^{N} p(t_{i}- t_{i-1}, x_{i-1}, x_{i}) 
\prod_{i=1}^{N-1} dx_i
\end{align}
with $x_0:=a$ and $x_N :=b$
for every rational partition and $\{g_i\}_{i=1}^N\subset C_{{\rm K}}^\infty (\R^e)$.
It suffices to show that both sides of \eqref{eq.CK2} are 
continuous in $b$ on $\{b\mid p (T,a,b) >0\}$.

The left hand side of \eqref{eq.CK2} equals
\[
p(T, a, b)^{-1}
{\mathbb E} \left[\prod_{i=1}^{N-1} g_i (y_{t_i}) \delta_b (y_T) \right] \, g_N(b).
\]
Since $b \mapsto \delta_b (y_T) \in {\bf D}_{p, -2l}$ is continuous
when $l \in N$ is large enough, the left hand side is continuous in $b$. 
Since the map $(a,b) \mapsto p(t,a,b)$ is continuous for every $t\in (0, T]$
and the supports of $g_i$'s are compact, we can see from 
Lebesgue's bounded convergence theorem that the right hand side of \eqref{eq.CK2} is also continuous in $b$.
\end{proof}

\section{Large deviations for fractional diffusion bridges}
\label{sec.LDP_state}

In this section we prove a small noise large deviation principle 
for scaled fractional diffusion bridges under a suitable
ellipticity condition on the coefficient vector fields.
We continue to assume $H \in (1/4, 1/2]$, 
$(\lfloor 1/H\rfloor+1)^{-1} < \alpha <H$
and that the coefficient vector fields 
$V_j$ are of $C_{{\rm b}}^\infty$ $(j\in \llbracket 0, d\rrbracket)$. 
We basically work on the abstract Wiener space $(\cW, \cH^H, \mu^H)$
and let $\beta \colon [0,1] \to \R$ is a $C^2$-function.
Typical examples of $\beta$ are
$\beta_\ve \equiv 1$ and $\beta_\ve =\ve^{1/H}$.
We set $\lambda_t :=t$ and denote by $ \R\langle \lambda \rangle$ 
be a one-dimensional linear subspace of $\cC^{1\textrm{-Hld}}_0 (\R)$
spanned by $\lambda$.

For a small parameter $\ve\in (0,1]$, 
we consider the following scaled RDE:
\begin{equation}\label{rde5}
dy^{\ve}_t = \ve \sum_{j=1}^d V_j (y^{\ve}_t) dw^{j}_t  + 
\beta_\ve V_0 (y^{\ve}_t) dt,
\qquad 
y^{\ve}_0 =a\in \R^e.
\end{equation}
This can be viewed as  a ``scaled version" of RDE \eqref{rde2}.
We sometime write $y^{\ve}_t =y^{\ve} (t, a)$.
The corresponding skeleton ODE is given as follows:
For $h\in \cH^H$, 
\begin{equation}\label{ode5}
d\hat\psi^h_t = \sum_{j=1}^d V_j (\hat\psi^h_t) dh_t  +\beta_0 V_0 (\hat\psi^h_t) dt,
\qquad 
\hat\psi^h_0 =a\in \R^e.
\end{equation}
This is very similar to ODE \eqref{ode2}
(the drift vector field $V_0$ is replaced by $\beta_0 V_0$).
We will write $\hat\psi^h =: \hat\Psi_a (h)$.

Now we introduce the following notation.
We assume $\alpha\in (1/4, 1/2]$ for a while.
First let us recall the Young pairing $\cP\colon 
G\Omega_{\alpha\textrm{-Hld}} (\R^d) \times 
\cC^{1\textrm{-Hld}}_0 (\R) \to G\Omega_{\alpha\textrm{-Hld}} (\R^{d+1})$.
It is defined to be a unique continuous map such that
\[
\mathcal{P} ( S_{\lfloor 1/\alpha\rfloor} (x), k) = S_{\lfloor 1/\alpha\rfloor} 
((x,k)), \qquad
x\in \cC^{1\textrm{-Hld}}_0 (\R^d), k\in  \cC^{1\textrm{-Hld}}_0 (\R).
\]
We denote by 
$\hat\Phi_a \colon G\Omega_{\alpha\textrm{-}{\rm Hld}} (\R^{d+1})\to 
\tilde\cC^{\alpha\textrm{-Hld}}_a (\R^e)$ 
the Lyons-It\^o map associated with the coefficients $\{V_j\}_{ 0\le j \le d}$.
(Unlike the preceding sections, $\hat\Phi_a$ is 
defined on the geometric rough path space over $\R^{d+1}$, not 
over $\R^{d}$.
Note that $\hat\Phi_a ( \mathcal{P} (X,\lambda))= \Phi_a (X)$.)

Assume $(\lfloor 1/H\rfloor+1)^{-1} <\alpha <H$ again,
Using these symbols, the solution of \eqref{rde5} can be written as 
$y^{\ve}=\hat\Phi_a ( \mathcal{P} (\ve W, \beta_\ve \lambda))$
with $W= \mathcal{S}_{\alpha\textrm{-}{\rm Hld}} (w)$.
Note also that $ \hat\Psi_a (h) = 
\hat\Phi_a ( \mathcal{P} 
(\mathcal{S}_{\alpha\textrm{-}{\rm Hld}}(h), \beta_0 \lambda))$
for every $h\in \cH^H$.

We say that the coefficient vector fields $\{V_j\}_{ 0\le j \le d}$
of RDE \eqref{rde5} and of ODE \eqref{rde5} 
satisfies the ellipticity condition 
at $z \in\R^e$ if $\{ V_1 (z), \ldots, V_d (z)\}$ linearly spans $\R^e$.
(Note that $V_0$ is actually irrelevant.)
We say that $\{V_j\}_{ 0\le j \le d}$ satisfies 
the ellipticity condition on an open subset $U\subset \R^e$
if it satisfies the ellipticity condition at every $z \in U$.

We sometimes write $\mathbf{V} (z) := [V_1 (z)| \cdots |V_d (z) ]$,
which is an $e\times d$-matrix valued function on $\R^e$.
(In this definition, $V_j$'s are viewed as $\R^e$-valued functions
in an obvious way.)
It is well-known that
$\mathbf{V} (z)\mathbf{V} (z)^{\top}$ is invertible if and only if 
$\{V_j\}_{ 0\le j \le d}$ satisfies the ellipticity condition at $z$,
where $A^{\top}$ stands for the transposed matrix of a matrix $A$.

From now on, we assume that $\{V_j\}_{ 0\le j \le d}$ satisfies 
the ellipticity condition at the starting point $a$ 
of RDE \eqref{rde5} and of ODE \eqref{ode5}.
Then, $y^{\ve}_t$ is known to be non-degenerate in the sense of 
Malliavin for every $0< t\le T$ (see Remark \ref{rem.nondeg}).
Likewise, thanks to the ellipticity assumption,
$\hat\Psi_a (\,\cdot\,)_t$ is non-degenerate 
at every $h \in \cH^H$ for every $0< t\le T$.
Recall that $\hat\Psi_a (\,\cdot\,)_t\colon \cH^H \to \R^e$ is called  
non-degenerate at $h$ if its Fr\'echet derivative
$D \hat\Psi_a (h)_t\colon \cH^H \to \R^e$ at $h$ is a surjective linear map.
This is equivalent to the invertibility of 
 the deterministic Malliavin 
covariance of $\hat\Psi_a (\,\cdot\,)_t$ at $h$.

Let $U\subset \R^e$ be a connected open subset
on which $\{V_j\}_{ 0\le j \le d}$ satisfies the ellipticity condition
and assume $a, b \in U$.
Then, there exists a $C^\infty$-path $\varphi \colon [0,T] \to U$
such that $\varphi_0 =a$ and $\varphi_T =b$.
Set $h_0 =0$ and 
\[
h^{\prime}_t := \mathbf{V} (\varphi_t)^{\top} 
\{ \mathbf{V} (\varphi_t)\mathbf{V} (\varphi_t)^{\top}\}^{-1}
[ \varphi^{\prime}_t - \beta_0 V_0 (\varphi_t) ],
\qquad t \in [0,T].
\]
Then, $h  \colon [0,T] \to \R^d$ is also $C^\infty$ and $\hat\Psi_a (h) =\varphi$. 
Indeed, 
$
 \mathbf{V} (\varphi_t) h^{\prime}_t =\varphi^{\prime}_t - \beta_0V_0 (\varphi_t)$.
Note that $C^\infty$-paths starting at $0$ belong to $\cH^H$.
(See \cite[Lemma 6.2]{bg15} for example.
This fact still holds even when $1/2 <H< 1$.)
Hence, there exists $h \in \cH^H$ such that
$\hat\Psi_a (h)$ connects $a$ and $b$ and $\hat\Psi_a (\,\cdot\,)_T$ is non-degenerate at $h$.
Likewise, setting $h(\ve)^{\prime}_0 =0$ and
\[
h(\ve)^{\prime}_t := \ve^{-1}\mathbf{V} (\varphi_t)^{\top} 
\{ \mathbf{V} (\varphi_t)\mathbf{V} (\varphi_t)^{\top}\}^{-1}
[ \varphi^{\prime}_t - \beta_{\ve} V_0 (\varphi_t) ],
\qquad t \in [0,T],
\]
for every fixed $\ve >0$,  we have 
\[
d\varphi_t= \ve \sum_{j=1}^d V_j (\varphi_t) dh(\ve)^{\prime}_t 
  + 
\beta_\ve V_0 (\varphi_t) dt,
\qquad 
\varphi_0 =a\in \R^e.
\]
This is ``$\ve$-dependent" skeleton ODE corresponds to \eqref{rde5}.
The solution map (at time $T$) of the above ODE
is also non-degenerate at every point of $\cH^H$ 
since the coefficients still satisfy 
the ellipticity condition at the starting point $a$.

In this case, as we have seen in Remark \ref{rem.positive},
$\delta_b (y^\ve (t,a))$ is a positive Watanabe distribution
with $p^\ve (T, a, b) := {\mathbb E} [\delta_b (y^\ve (t,a)) ]>0$.
We denote by $\mu_{a,b}^{H, \ve}$ the
Borel probability measure on $\cW$ corresponding to 
$p^\ve (T, a, b)^{-1}\delta_b (y^\ve (t,a))$. 
We denote its rough path lift by $\nu_{a,b}^{H, \ve} := 
(\ve\mathcal{S}_{\alpha\textrm{-}{\rm Hld}})_* \mu_{a,b}^{H, \ve}$.
(It is worth recalling that $\mathcal{Z}_{\alpha\textrm{-}{\rm Hld}}$ 
is invariant under the dilation by $\ve$
and $\ve\mathcal{S}_{\alpha\textrm{-}{\rm Hld}} (w)=
\mathcal{S}_{\alpha\textrm{-}{\rm Hld}}(\ve w)$ for all 
$w\in \mathcal{Z}_{\alpha\textrm{-}{\rm Hld}}$.
Of course, an analogous fact holds in the Besov case, too.)

Set $\tilde{I}\colon  G\Omega_{\alpha\textrm{-Hld}} (\R^d)\to [0, \infty]$ by 
\[
\tilde{I} (X) = \begin{cases}
\tfrac12 \|h\|^2_{\cH^H} 
& (\mbox{if $X= S_{\lfloor 1/H\rfloor} (h)$ for some 
$h\in \cH^H$ with $\hat\Psi_a (h)_T =b$}),\\
\infty & (\mbox{otherwise}).
\end{cases}
\]
Note that, for every $r \in [0, \infty)$, we have
\begin{align*}  
\{X\in G\Omega_{\alpha\textrm{-Hld}} (\R^d)\mid  \tilde{I}(X) \le r \} 
&= \{ S_{\lfloor 1/H\rfloor} (h) \mid  
\mbox{$h \in \cH^H$ with $\tfrac12 \|h\|^2_{\cH^H} \le r$} \} 
\nn\\
&\qquad
\cap 
\{ X \in G\Omega_{\alpha\textrm{-Hld}} (\R^d) \mid 
 \hat\Phi_a ( \mathcal{P} 
(X, \beta_0 \lambda))_T =b\}.
\end{align*}
We can easily see from 
and the Schilder type LDP for fractional Brownian rough path 
and Lyons' continuity theorem
that the right hand side is compact.
Hence, $\tilde{I}$ is good.
We also introduce another good rate function 
$I\colon  G\Omega_{\alpha\textrm{-Hld}} (\R^d)\to [0, \infty]$ by 
\begin{align*}  
I (X) 
&= \tilde{I}(X) - \min\{ \tilde{I}(X) \mid X \in G\Omega_{\alpha\textrm{-Hld}} (\R^d)\}
\nn\\
&= 
\tilde{I}(X) -\min\{ \tfrac12 \|h\|^2_{\cH^H} \mid 
\mbox{$h\in \cH^H$ with $\hat\Psi_a (h)_T=b$} \}.
\end{align*}  
The minimum on the right hand side is actually attained
since $\tilde{I}$  good. Hence, the minimum of $I$ is $0$.

Of course, essentially the same objects can be defined 
in the Besov case, too.
(They are denoted by the same symbols.)
Let $(\alpha, m)$ satisfy \eqref{cond.0523-3}.
Then, the lifted measure
$\nu_{a,b}^{H, \ve} := 
(\ve\mathcal{S}_{\alpha, 12m\textrm{-}{\rm Bes}})_* \mu_{a,b}^{H, \ve}$
is introduced on the Besov rough path space
$G\Omega_{\alpha, 12m\textrm{-}{\rm Bes}} (\R^d)$, too.
The good rate functions $\tilde{I}$ and $I$ can also be 
defined on $G\Omega_{\alpha, 12m\textrm{-}{\rm Bes}} (\R^d)$
in the same way as in the H\"older case.


Now we provide our main large deviation results.
The speed of these large deviations is $1/\ve^2$.
The key is proving large deviations for the family of unnormalized measures
$\{p^\ve (T,a,b)\nu_{a,b}^{H,\ve}\}_{\ve \in (0,1]}$
on the Besov-type geometric rough path space, which is quite hard
and will be postponed to the next section.

\begin{theorem} \label{thm.LDP8/6}
Let $H \in (1/4, 1/2]$.
We assume that $C_{{\rm b}}^\infty$-vector fields
$V_j~(j\in \llbracket 0, d\rrbracket)$ on $\R^e$ satisfy the following condition: 
There exists a connected open subset $U$ of $\R^e$ 
such that $a, b \in U$ and $\{V_j\}_{ 0\le j \le d}$
satisfies the ellipticity condition on $U$.
Then, the following three statements hold:
\\
{\rm (1)}~The following Varadhan type asymptotics holds:
\[
\lim_{\ve\searrow 0}\ve^2 \log p^\ve (T, a, b)=-\min\{ \tfrac12 \|h\|^2_{\cH^H} \mid 
\mbox{$h\in \cH^H$ with $\hat\Psi_a (h)_T =b$} \}.
\]
{\rm (2)}~For every $\alpha$ with $(\lfloor 1/H\rfloor+1)^{-1}<\alpha< H$,
$\{\nu_{a,b}^{H, \ve}\}_{\ve \in (0,1]}$ satisfies a 
large deviation principle 
on $G\Omega_{\alpha\textrm{-}\rm{Hld}} (\R^d)$ as $\ve \searrow 0$
with good rate function $I$. 
Moreover, $\{\nu_{a,b}^{H,\ve}\}_{\ve \in (0,1]}$  is exponentially tight 
as $\ve \searrow 0$.
\\
{\rm (3)}~For every $(\alpha, m)$ satisfying  \eqref{cond.0523-3}
and \eqref{cond.0523-4}, 
$\{\nu_{a,b}^{H,\ve}\}_{\ve \in (0,1]}$ satisfies a large deviation principle 
on $G\Omega_{\alpha, 12m\textrm{-}{\rm Bes}} (\R^d)$ 
as $\ve \searrow 0$ with good rate function $I$. 
Moreover, $\{\nu_{a,b}^{H,\ve}\}_{\ve \in (0,1]}$  is exponentially tight 
as $\ve \searrow 0$.
\end{theorem}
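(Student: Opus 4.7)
The plan is to reduce the three claims to a single master estimate: an LDP with good rate function $\tilde{I}$ at speed $1/\ve^2$ for the family of unnormalized finite measures $\{p^\ve(T,a,b)\nu_{a,b}^{H,\ve}\}_{\ve \in (0,1]}$ on the Besov rough path space $G\Omega_{\alpha,12m\textrm{-}{\rm Bes}}(\R^d)$, together with exponential tightness of this family, for any $(\alpha,m)$ satisfying both \eqref{cond.0523-3} and \eqref{cond.0523-4}. The goodness of $\tilde{I}$ has already been observed in the excerpt via the Schilder-type LDP for the canonical lift of FBM together with Lyons' continuity theorem. Establishing the master LDP for the unnormalized family is the essential obstacle and is postponed to the next section; the expected strategy is a fine Watanabe-distributional analysis of $\delta_b(y^\ve_T)$, using integration by parts in the sense of Malliavin calculus, the quasi-sure lift of Section \ref{sec.QSlift} to make pointwise conditioning rigorous, and an Azencott-type translation around the minimizing Cameron-Martin path, for which the Besov Young translation of Lemma \ref{lem.Ytrans} is the key technical device.

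Granting the master estimate, claim (1) follows immediately by testing against the full space, which is simultaneously open and closed, giving
\begin{equation*}
\lim_{\ve\searrow 0}\ve^2 \log p^\ve(T,a,b)
= -\inf \tilde{I}
= -\min\{\tfrac12\|h\|^2_{\cH^H} \mid h\in \cH^H,\ \hat\Psi_a(h)_T=b\}.
\end{equation*}
Claim (3) is then obtained by normalization: writing $\nu_{a,b}^{H,\ve}= p^\ve(T,a,b)^{-1} \cdot (p^\ve(T,a,b)\nu_{a,b}^{H,\ve})$ and using (1), for any closed, respectively open, subset $B$ of $G\Omega_{\alpha,12m\textrm{-}{\rm Bes}}(\R^d)$ the identity
\begin{equation*}
\ve^2\log \nu_{a,b}^{H,\ve}(B) = \ve^2\log[p^\ve(T,a,b)\nu_{a,b}^{H,\ve}](B) - \ve^2\log p^\ve(T,a,b)
\end{equation*}
shifts the rate by $-\inf\tilde{I}$, producing the rate function $I = \tilde{I}-\inf\tilde{I}$ with $\min I = 0$. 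Exponential tightness is preserved under the same manipulation: if $\{K_R\}_{R>0}$ is a compact exhaustion witnessing exponential tightness of the unnormalized family, then $\limsup_{\ve\searrow 0} \ve^2 \log \nu_{a,b}^{H,\ve}(K_R^c) \le -R + \inf\tilde{I}\to -\infty$ as $R\to\infty$.

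For claim (2), given $\alpha \in ((\lfloor 1/H\rfloor+1)^{-1}, H)$, select $(\alpha',m)$ satisfying both \eqref{cond.0523-3} and \eqref{cond.0523-4} with $\alpha' - 1/(12m) > \alpha$, so that the continuous injective inclusion $\iota \colon G\Omega_{\alpha',12m\textrm{-}{\rm Bes}}(\R^d) \hookrightarrow G\Omega_{\alpha\textrm{-}{\rm Hld}}(\R^d)$ afforded by \eqref{ineq.0311-4} is available. By Corollary \ref{cor.0411-2} and the definition of $\nu_{a,b}^{H,\ve}$ as a push-forward by $\ve\mathcal{S}$, the H\"older version of $\nu_{a,b}^{H,\ve}$ is exactly $\iota_*$ of its Besov counterpart, and the two rate functions agree on the common support $\{S_{\lfloor 1/H\rfloor}(h)\mid h\in\cH^H,\ \hat\Psi_a(h)_T=b\}$. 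The contraction principle therefore transfers the Besov LDP of (3) to the H\"older LDP of (2) with the stated rate $I$; exponential tightness transfers simultaneously, since $\iota$ sends compact exhaustions to compact exhaustions and $\nu_{a,b}^{H,\ve}(\iota(K_R)^c)=\nu_{a,b}^{H,\ve}(K_R^c)$ by injectivity. Thus, modulo the Besov master LDP, which is the genuine technical work, all three claims reduce to normalization and rough-path embedding bookkeeping.
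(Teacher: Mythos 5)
Your proposal is correct and follows essentially the same architecture as the paper's proof: everything is reduced to the LDP (with good rate $\tilde I$) and exponential tightness for the unnormalized family $\{p^\ve(T,a,b)\nu_{a,b}^{H,\ve}\}$ on the Besov rough path space (established in Section~\ref{sec.proof.low}), from which (1) follows by testing on the full space, (3) by normalization shifting the rate by $-\inf\tilde I$, and (2) via the Besov--H\"older embedding \eqref{ineq.0311-4}. The only difference is cosmetic: you make the contraction-principle step for (2) and the transfer of exponential tightness under normalization explicit, whereas the paper leaves these routine.
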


\begin{proof} 
By the Besov-H\"odler embedding \eqref{ineq.0311-4},
Assertion (2) immediately follows from Assertion (3).
So, we only explain (1) and (3).

The large deviation principle for the {\it unnormalized} measures 
$\{\tilde\nu_{a,b}^{H,\ve} \}_{\ve \in (0,1]}$ will be shown 
in \eqref{low_0808-1} and \eqref{up_0826-1} below, 
where $\tilde\nu_{a,b}^{H,\ve} := p^\ve (T,a,b)\nu_{a,b}^{H,\ve}$.
The exponential tightness will be shown in \eqref{low_0819-1}.
Moreover,
since the whole set $G\Omega_{\alpha, 12m\textrm{-}{\rm Bes}} (\R^d)$ is both open and closed, \eqref{low_0808-1} and \eqref{up_0826-1}
also imply that
\begin{align*}
\lim_{\ve\searrow 0}\ve^2 \log p^\ve (T, a, b)
&=
\lim_{\ve\searrow 0}\ve^2 \log \tilde\nu_{a,b}^\ve (G\Omega_{\alpha, 12m\textrm{-}{\rm Bes}} (\R^d)) 
\nn\\
&=-\min\{ \tfrac12 \|h\|^2_{\cH^H} \mid 
\mbox{$h\in \cH^H$ with $\hat\Psi_a (h)_T =b$} \}.
\end{align*}
Thus, we have shown Assertion (1) and (3).
\end{proof}

We denote by $\mathbb{Q}_{a,b}^{H, \ve}$ the law of $y^{\ve}$ 
conditioned that $y^{\ve}_T =b$, that is, 
the law of $y^\ve = \hat\Phi_a (\cP (\ve W, \beta_\ve \lambda))$
under $\mu_{a,b}^{H, \ve}$.
Since $y^\ve = \hat\Phi_a (\cP (\ve W, \beta_\ve \lambda))$
and the law of $\ve W$ under $\mu_{a,b}^{H, \ve}$ equals $\nu_{a,b}^{H,\ve}$,
we have 
$\mathbb{Q}_{a,b}^{H, \ve} 
= [\hat\Phi_a \circ\cP]_* 
(\nu_{a,b}^{H, \ve} \otimes \delta_{\beta_\ve \lambda})$.

With Theorem \ref{thm.LDP8/6} at hand,
we can easily obtain a large deviation principle for 
the laws $\{ \mathbb{Q}_{a,b}^{H,\ve}\}_{\ve (0,1]}$ 
of (scaled) fractional diffusion bridges as $\ve\searrow 0$.
This is one of our main results in this paper.

\begin{corollary} \label{cor.ldp_fdb}
Let $H \in (1/4, 1/2]$.
We assume that $C_{{\rm b}}^\infty$-vector fields
$V_j~(j\in \llbracket 0, d\rrbracket)$ on $\R^e$ satisfy the following condition: 
There exists a connected open subset $U$ of $\R^e$ 
such that $a, b \in U$ and $\{V_j\}_{ 0\le j \le d}$
satisfies the ellipticity condition on $U$.

Then, for every $\alpha$ with $(\lfloor 1/H\rfloor+1)^{-1}<\alpha< H$, 
$\{\mathbb{Q}_{a,b}^{H, \ve}\}_{\ve \in (0,1]}$  
satisfies a large deviation principle 
on $\tilde{\cC}^{\alpha\textrm{-}{\rm Hld}}_{a, b} (\R^e)$ as $\ve \searrow 0$
with good rate function $J$, where we set
\begin{align*}
J (\xi) &:= \inf\{  \tfrac12 \|h\|^2_{\cH^H}  \mid 
\mbox{$h\in \cH^H$ with  $\xi =\hat\Psi_a (h)$}\}, 
\\
&\qquad -\min\{ \tfrac12 \|h\|^2_{\cH^H} \mid 
\mbox{$h\in \cH^H$ with $\hat\Psi_a (h)_T =b$} \},
\qquad \xi \in \tilde{\cC}^{\alpha\textrm{-}{\rm Hld}}_{a, b} (\R^e).
 \end{align*}
As usual, $\inf \emptyset =\infty$ by convention.
\end{corollary}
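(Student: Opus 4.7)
The plan is to derive the LDP for the bridge laws $\mathbb{Q}_{a,b}^{H,\ve}$ from the LDP for the lifted measures $\nu_{a,b}^{H,\ve}$ (Theorem \ref{thm.LDP8/6}(2)) via a contraction principle that tolerates the mild $\ve$-dependence of the It\^o map inherited from the scaled drift $\beta_\ve V_0$. Define the $\ve$-family of continuous maps
\[
F_\ve(X) := \hat\Phi_a(\cP(X, \beta_\ve\lambda)),\qquad \ve\in [0,1],
\]
so that $\mathbb{Q}_{a,b}^{H,\ve} = (F_\ve)_*\nu_{a,b}^{H,\ve}$. Each $F_\ve\colon G\Omega_{\alpha\textrm{-Hld}}(\R^d) \to \tilde\cC_a^{\alpha\textrm{-Hld}}(\R^e)$ is continuous by continuity of the Young pairing $\cP$ and of the Lyons-It\^o map $\hat\Phi_a$.

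Next I would show $F_\ve \to F_0$ uniformly on every sublevel set $\{I \le r\}$, which is compact by goodness of $I$. Since $\beta \in C^2([0,1])$, the family $\{\beta_\ve \lambda : \ve \in [0,1]\}$ is compact in $\cC_0^{1\textrm{-Hld}}(\R)$, so the joint continuous map $(X,k)\mapsto \hat\Phi_a(\cP(X,k))$ is uniformly continuous on the compact product $\{I \le r\}\times \{\beta_\ve \lambda : \ve \in [0,1]\}$, yielding
\[
\sup_{X\in\{I\le r\}}\|F_\ve(X) - F_0(X)\|_{\alpha\textrm{-Hld}} \longrightarrow 0,\qquad \ve\searrow 0.
\]
This locally uniform convergence, combined with the exponential tightness of $\{\nu_{a,b}^{H,\ve}\}$ provided by Theorem \ref{thm.LDP8/6}(2), is exactly what is needed to invoke the extended contraction principle (Dembo-Zeitouni, Theorem 4.2.23). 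It delivers the LDP for $\{\mathbb{Q}_{a,b}^{H,\ve}\}$ on $\tilde\cC_a^{\alpha\textrm{-Hld}}(\R^e)$ with good rate function $\tilde J(\xi) := \inf\{I(X) \mid F_0(X) = \xi\}$; since this rate is $+\infty$ outside the closed subspace $\tilde\cC_{a,b}^{\alpha\textrm{-Hld}}(\R^e)$ (as confirmed below), the LDP restricts there.

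It then remains to identify $\tilde J$ with $J$. If $I(X) < \infty$ then, by definition of $I$ and $\tilde I$, $X = S_{\lfloor 1/H\rfloor}(h)$ for some $h \in \cH^H$ with $\hat\Psi_a(h)_T = b$, and the identity
\[
F_0(X) = \hat\Phi_a(\cP(\mathcal{S}_{\alpha\textrm{-Hld}}(h), \beta_0 \lambda)) = \hat\Psi_a(h)
\]
(recorded in Section \ref{sec.LDP_state}, using Corollary \ref{cor.0411-2} to identify $S_{\lfloor 1/H\rfloor}(h)$ with $\mathcal{S}_{\alpha\textrm{-Hld}}(h)$) turns the constraint $F_0(X) = \xi$ into $\hat\Psi_a(h) = \xi$. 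Since $\xi_T = b$ is automatic for $\xi \in \tilde\cC_{a,b}^{\alpha\textrm{-Hld}}(\R^e)$, the condition $\hat\Psi_a(h)_T = b$ baked into $\tilde I$ becomes redundant under the constraint $\hat\Psi_a(h) = \xi$, and after subtracting the constant $\min\tilde I$ one recovers exactly the stated expression for $J(\xi)$.

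I expect the main obstacle to be the careful verification of the hypotheses of the extended contraction principle, in particular the locally uniform convergence on sublevel sets; the rest is bookkeeping. An alternative route, should that invocation prove awkward, is to show that $\mathbb{Q}_{a,b}^{H,\ve}$ and $(F_0)_*\nu_{a,b}^{H,\ve}$ are exponentially equivalent at speed $\ve^2$ using the same uniform estimate, and then apply the ordinary contraction principle to $(F_0)_*\nu_{a,b}^{H,\ve}$.
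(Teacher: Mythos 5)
Your proposal is correct, and it reaches the conclusion by a genuinely different route from the paper. The paper packages the $\ve$-dependence of the drift scaling into a second family of (degenerate) measures: it first notes that $\{\delta_{\beta_\ve\lambda}\}_{\ve\in(0,1]}$ satisfies an LDP on $\R\langle\lambda\rangle$ with good rate $L$ concentrated at $\beta_0\lambda$, then asserts that the product measures $\nu_{a,b}^{H,\ve}\otimes\delta_{\beta_\ve\lambda}$ satisfy an LDP on $G\Omega_{\alpha\textrm{-Hld}}(\R^d)\times\R\langle\lambda\rangle$ with good rate $I\oplus L$, and finally applies the \emph{ordinary} contraction principle to the single fixed continuous map $\hat\Phi_a\circ\cP$ to land in $\tilde\cC_{a,b}^{\alpha\textrm{-Hld}}(\R^e)$. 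You instead absorb the $\ve$-dependence into the map $F_\ve(X)=\hat\Phi_a(\cP(X,\beta_\ve\lambda))$ and invoke the \emph{extended} contraction principle after verifying uniform convergence $F_\ve\to F_0$ on the compact $I$-sublevel sets. Both routes are valid and of comparable length; what each buys is slightly different. The paper's argument keeps the map fixed but implicitly relies on an LDP for the product of two families of measures (a step it does not cite explicitly, and which, while standard in this degenerate situation, is really the extended contraction principle in disguise). Your argument instead invokes a named theorem of Dembo--Zeitouni whose hypotheses you check cleanly via joint continuity of $(X,k)\mapsto\hat\Phi_a(\cP(X,k))$ on compacts, and the fallback via exponential equivalence that you sketch is exactly the standard proof of that theorem. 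Your identification of the rate function, including the observation that the terminal constraint $\hat\Psi_a(h)_T=b$ in $\tilde I$ is absorbed by the constraint $\hat\Psi_a(h)=\xi$ once $\xi_T=b$, matches the paper's final step, and the restriction to the closed subspace $\tilde\cC_{a,b}^{\alpha\textrm{-Hld}}(\R^e)$ is justified by Lemma \ref{lem.0510_1}. No gaps.
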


\begin{proof} 
It is almost obvious that,  as $\ve \searrow 0$,
$\{\delta_{\beta_\ve \lambda}\}_{\ve \in (0,1]}$ is exponentially tight and 
satisfies a large deviation principle 
on $\R\langle \lambda \rangle$ as $\ve \searrow 0$
with good rate function $L$, where $L (s \lambda) :=0$ if $s=\beta_0$ 
and $L (s \lambda) :=\infty$ if $s\neq\beta_0$.

By this fact and Theorem \ref{thm.LDP8/6} (2), 
$\{\nu_{a,b}^{H,\ve} \otimes\delta_{\beta_\ve \lambda}\}_{\ve \in (0,1]}$  
satisfies a large deviation principle 
on $G\Omega_{\alpha\textrm{-Hld}} (\R^{d})
\times \R\langle \lambda \rangle$ as $\ve \searrow 0$ 
with good rate function $K$ defined by 
$K (X, s\lambda):= \hat{I}(X)+L (s \lambda)$.

Since $\mathbb{Q}_{a,b}^{H, \ve}$ is the law 
of $\nu_{a,b}^{H, \ve} \otimes\delta_{\beta_\ve \lambda}$ under the
continuous map (i.e. the composition of the Young pairing and $\hat\Phi_a$), 
$\{\mathbb{Q}_{a,b}^{H, \ve}\}_{\ve \in (0,1]}$  
satisfies a large deviation principle 
on $\tilde{\cC}^{\alpha\textrm{-}{\rm Hld}}_{a, b} (\R^e)$ as $\ve \searrow 0$
with good rate function.
(This is due to the contraction principle. See \cite[Theorem 4.2.1]{dzbook}
for example.)
The explicit form of the rate function follows of 
the contraction principle and the fact that 
$\hat\Psi_a (h)=
\hat\Phi_a (\cP ( S_{\lfloor 1/H\rfloor} (h), \beta_0 \lambda))$ for $h \in \cH^H$.
\end{proof}

\begin{remark} \label{rem.relax}
We assumed for simplicity that $\beta \colon [0,1] \to \R$ is of $C^2$,
but this $C^2$-condition can be relaxed to the 
${\rm Lip}^{\gamma}$-function for some $\gamma \in (1,2]$.
(This condition means that $\beta$ is of $C^1$ and 
$\beta^{\prime}$ is $(\gamma -1)$-H\"older continuous.)
By this relaxation, the following two minor modifications must be made
in the proof of lower estimate:
\begin{itemize} 
\item
``$O (\ve^2)$" in Eq. \eqref{est.0815-1} should be relpaced 
by ``$O (\ve^\gamma)$."  
\item
``$O (\ve)$" in Eq. \eqref{est.0926-1} should be relpaced 
by ``$O (\ve^{\gamma-1})$." 
\end{itemize}
On the other hand, the proofs of exponential tightness
and of upper estimate need not be modified.
\end{remark}

\begin{remark} 
There are several preceding works on large deviations of
Freidlin-Wentzell type for bridge measures.
First, the case of Brownian bridges on 
compact Riemannian manifolds was proved in \cite{hsu}.
The case of elliptic and hypoelliptic
diffusion bridges on Euclidean spaces was proved 
in \cite{in1, in3}, respectively.
The case of hypoelliptic diffusion bridges on manifolds 
was studied in \cite{bai, in4}. 
In a recent preprint \cite{hw}, the case of (a certain class of) jump processes was shown.
The method of combining quasi-sure analysis and rough path theory
was developed in the author's previous works \cite{in1, in3, in4}.
\end{remark}

\section{Proof of Theorem \ref{thm.LDP8/6} (1) \& (3)}
\label{sec.proof.low}

In this section we prove Theorem \ref{thm.LDP8/6} (1) and (3).
We set $\tilde\mu_{a,b}^{H, \ve}= p^\ve (T, a, b)\mu_{a,b}^{H, \ve}$, which corresponds to the positive Watanabe distribution
 $\delta_b (y^\ve (t,a))$ via Sugita's theorem.
Similarly, we set 
$\tilde\nu_{a,b}^{H, \ve}= p^\ve (T, a, b)\nu_{a,b}^{H, \ve}$.
We mainly calculate these unnormalized finite measures.

Throughout this section, we will work 
under the assumption of Theorem \ref{thm.LDP8/6} (3).
We will often write $W:= \mathcal{S}_{\alpha, 12m\textrm{-}{\rm Bes}}(w)$ for simplicity.
Recall that $y^\ve = \hat\Phi_a (\cP(\ve W, \beta_\ve \lambda))$.
Precisely, we understand the right hand side in the following way
(we write $\gamma :=\alpha - (12m)^{-1}$ for simplicity):
\begin{itemize} 
\item
Though 
$\ve W$ is $G\Omega_{\alpha, 12m\textrm{-}{\rm Bes}} (\R^d)$-valued,
it can also be viewed as 
$G\Omega_{\gamma\textrm{-Hld}} (\R^d)$-valued, 
due to the embedding in \eqref{ineq.0311-4}.
(This embedding will not be written explicitly.)
\item
$\cP (\ve W, \beta_\ve \lambda)\in G\Omega_{\gamma\textrm{-Hld}} (\R^{d+1})$ stands for the Young pairing of 
$\ve W\in G\Omega_{\gamma\textrm{-Hld}} (\R^d)$ and 
$\beta_\ve \lambda \in \R \langle \lambda \rangle$.
\item
$\hat\Phi_a$ is viewed as a continuous map from 
$G\Omega_{\gamma\textrm{-Hld}} (\R^{d+1})$ to $\tilde\cC^\gamma_a (\R^e)$. 
\item
Therefore, $y^\ve$ is a $\tilde
\cC^\gamma_a (\R^e)$-valued random variable
and a continuous image of the (direct product) pair of 
$\ve W\in G\Omega_{\alpha, 12m\textrm{-Bes}} (\R^d)$ and 
$\beta_\ve \lambda \in \R \langle \lambda \rangle$.
\end{itemize}

\subsection{Large deviation lower bound}
\label{subsec.low}

In this subsection we will show that 
\begin{equation} \label{low_0808-1}
\varliminf_{\ve\searrow 0}  \ve^2 \log \tilde\nu_{a,b}^{H,\ve} (O)
 \ge -\inf_{X \in O} \tilde{I} (X)
\end{equation}
holds for every open subset 
$O$ of $G\Omega_{\alpha, 12m\textrm{-}{\rm Bes}} (\R^d)$.

Denote by $\iota^* \colon \cW^* \hookrightarrow (\cH^H)^* \cong \cH^H$
be the adjoint of the inclusion $\iota \colon\cH^H \hookrightarrow\cW$.
By the Riesz identification $(\cH^H)^* \cong \cH^H$, 
we view $\iota^*  (\cW^*)$ as a dense subspace of $\cH^H$.
More concretely, 
$\iota^*  (\cW^*)=\{h \in \cH^H\mid 
\mbox{$\la h, \,\cdot\,\ra_{\cH}$ extends to an element of $\cW^*$} \}$.

Take any $h \in \cH^H$ with $\hat\Psi_a (h)_T=b$. 
Since $D\hat\Psi_a (h)_T$ is non-degenerate, 
i.e. a surjective linear map from $\cH^H$ to $\R^e$,
there exists a sequence $\{h_n \}_{n\in\N} \subset \iota^*  (\cW^*)$ such that 
$\lim_{n\to\infty} \|h-h_n\|_{\cH} =0$ and $\hat\Psi_a (h_n)_T=b$ for 
all $n\in\N$. 
(See \cite[Lemma 7.3]{in1} for this fact.)
By (a special case of) Lemma \ref{lem.Ytrans},
$\lim_{n\to\infty}S_{\lfloor 1/H\rfloor} (h_n)=S_{\lfloor 1/H\rfloor} (h)$
in the $(\alpha, 12m)$-Besov topology. 
Hence, we have
\begin{align*}  
\inf_{X \in O} I(X) 
&=
\{\tfrac12 \|h\|^2_{\cH^H}  \mid 
\mbox{
$h\in \cH^H$ with $\hat\Psi_a (h)_T =b$ and $S_{\lfloor 1/H\rfloor} (h)\in O$}
\}
\nn\\
&=
\{\tfrac12 \|k\|^2_{\cH^H}  \mid 
\mbox{
$k\in \iota^*  (\cW^*)$ with $\hat\Psi_a (k)_T =b$ and $S_{\lfloor 1/H\rfloor} (k)\in O$}
\}.
\end{align*}

For simplicity,
we write $K :=S_{\lfloor 1/H\rfloor} (k)$ and set 
\[
B_r (X) := \bigcap_{1\le i\le \lfloor 1/H\rfloor}
\left\{\hat{X}\in G\Omega_{\alpha, 12m\textrm{-}{\rm Bes}} (\R^d)
\middle| 
\|\hat{X}^i-X^i\|_{i \alpha, 12m/i\textrm{-Bes}}^{12m/i} <r
\right\}
\]
for $r \in (0,\infty)$ and $X\in G\Omega_{\alpha, 12m\textrm{-}{\rm Bes}} (\R^d)$.
Note that $\{ B_r (X)\}_{r \in (0,\infty)}$ forms a 
fundamental system of neighborhood around $X$.
Similarly, thanks to Lemma \ref{lem.Ytrans}, 
$\{ \mathcal{T}_k B_r (\mathbf{0})\}_{r \in (0,\infty)}$ forms a 
fundamental system of neighborhood around $K$.
(Here, $\mathbf{0}$ stands for the zero rough path
and $\mathcal{T}_k$ for the Young translation by $k$.)

Therefore,
in order to obtain \eqref{low_0808-1}, it suffices to show that 
\begin{equation} \label{low_0808-2}
\lim_{r \searrow 0}
\varliminf_{\ve\searrow 0}  \ve^2 \log \tilde\nu_{a,b}^{H,\ve} (\mathcal{T}_k B_r (\mathbf{0}))
 \ge -\frac12 \|k\|^2_{\cH^H} 
\end{equation}
for every $k\in \iota^*  (\cW^*)$ with $\hat\Psi_a (k)_T =b$.
We will prove this lower estimate in the rest of this subsection.

Take $\alpha' \in  (\alpha, H)$ sufficiently close to $H$.
Since the $\alpha'$-H\"older topology is stronger than 
the $(\alpha, 12m)$-Besov norm, we have
$\mathcal{Z}_{\alpha'\textrm{-}{\rm Hld}}\subset \mathcal{Z}_{\alpha, 12m\textrm{-}{\rm Bes}}$. 
By Corollary \ref{cor.0321} and Cameron-Martin theorem, 
$(\mathcal{Z}_{\alpha'\textrm{-}{\rm Hld}})^c$ and 
$( \mathcal{Z}_{\alpha'\textrm{-}{\rm Hld}} -k)^c= 
(\mathcal{Z}_{\alpha'\textrm{-}{\rm Hld}})^c -k$ are both slim
for every $k \in \cH^H$.
Hence, 
$\mathcal{S}_{\alpha, 12m\textrm{-}{\rm Bes}}
= \mathcal{S}_{\alpha'\textrm{-}{\rm Hld}}$, 
\footnote{
Precisely speaking, the right hand side should be the composition 
of $\mathcal{S}_{\alpha'\textrm{-}{\rm Hld}}$ and the inclusion map
$G\Omega_{\alpha'\textrm{-Hld}} (\R^d) \hookrightarrow 
G\Omega_{\alpha, 12m\textrm{-}{\rm Bes}} (\R^d)$.
}
quasi-surely.
Moreover, if $k \in \cH^H$,
$\mathcal{S}_{\alpha'\textrm{-}{\rm Hld}} (\,\cdot\, +k)
= \mathcal{T}_k \mathcal{S}_{\alpha'\textrm{-}{\rm Hld}}$, quasi-surely.
(Consequently, 
$\mathcal{S}_{\alpha, 12m\textrm{-}{\rm Bes}}(\,\cdot\, +k)
= \mathcal{T}_k \mathcal{S}_{\alpha, 12m\textrm{-}{\rm Bes}} $, quasi-surely. Actually, this will be used below.)
Indeed, if $w\in (\mathcal{Z}_{\alpha'\textrm{-}{\rm Hld}}) 
\cap (\mathcal{Z}_{\alpha'\textrm{-}{\rm Hld}} -k)$, then 
\begin{align*}  
\mathcal{S}_{\alpha'\textrm{-}{\rm Hld}} (w +k)_{s,t}
&=
\lim_{l\to\infty}S_{\lfloor 1/H\rfloor} ((w+k)(l))_{s,t}
\\
&=
\lim_{l\to\infty}\mathcal{T}_{k(l)} S_{\lfloor 1/H\rfloor} (w(l))_{s,t}
\\
&=
\mathcal{T}_{k} \mathcal{S}_{\alpha'\textrm{-}{\rm Hld}} (w)_{s,t}, 
\qquad\qquad (s,t) \in \triangle_T.
\end{align*}
The first equality is due to
$w\in \mathcal{Z}_{\alpha'\textrm{-}{\rm Hld}} -k$.
The last equality is due to the Young translation
in the variation topologies because 
(i)~$w\in \mathcal{Z}_{\alpha'\textrm{-}{\rm Hld}}$, 
(ii)~$\lim_{l\to\infty} \|k(l) -k\|_{q\textrm{-var}}=0$ for 
every $q > ( H + 1/2)^{-1}$,
(iii)~(we may assme that) $\alpha' +1/q >1$.

Let $\chi \colon \R \to [0,1]$ be an even smooth function 
with compact support such that 
$\chi (x) =1$ if $|x| \le 1/2$ and $\chi (x) =0$ if $|x| \ge 1$.
We write $\chi_r (x) =\chi (x/r)$ for $r \in (0, \infty)$.
Clearly, 
\[
\mathbf{1}_{B_r (\mathbf{0})} (X)
\ge 
\prod_{1\le i \le \lfloor 1/H\rfloor}
\chi_r (\|X^i \|_{i \alpha, 12m/i\textrm{-Bes}}^{12m/i} ), 
\qquad X \in  G\Omega_{\alpha, 12m\textrm{-}{\rm Bes}} (\R^d).
\]
For simplicity, we write $y^{\ve}(t,a) =y^{\ve}_t$.
We also write $W:= \mathcal{S}_{\alpha, 12m\textrm{-}{\rm Bes}}(w)$,
which is $\infty$-quasi-continuous in $w$ as we have seen in Theorem \ref{thm.0321}.

By Sugita's theorem and the Cameron-Martin theorem
for the translation by $k/\ve$, we have
\begin{align}  \label{est.0809-1}
\lefteqn{
\tilde\nu_{a,b}^{H, \ve} (\mathcal{T}_k B_r (\mathbf{0}))
}
\nn\\
&=
\tilde\mu_{a,b}^{H, \ve} \left(\{ w \in \cW \mid
\ve W 
\in \mathcal{T}_k B_r (\mathbf{0})\}
\right)
\nn\\
&=
\int_{\cW} \mathbf{1}_{B_r (\mathbf{0})} (  \mathcal{T}_{-k} (\ve W)) 
\, \tilde\mu_{a,b}^{H, \ve} (dw)
\nn\\
&\ge
\int_{\cW}
\prod_{i}
\chi_r (\| \mathcal{T}_{-k} (\ve W)^i \|_{i \alpha, 12m/i\textrm{-Bes}}^{12m/i} )
\, \tilde\mu_{a,b}^{H, \ve} (dw)
\nn\\
&=
\mathbb{E}\left[
\prod_{i}
\chi_r (\| \mathcal{T}_{-k} (\ve W)^i \|_{i \alpha, 12m/i\textrm{-Bes}}^{12m/i} )
\, \delta_b (y^{\ve}_T)
\right]
\nn\\
&=
\mathbb{E}\left[
\exp \left(
-\frac{\la k, \,\cdot\,\ra}{\ve} - \frac{\|k\|_{\cH^H}^2}{2\ve^2}
\right)
\prod_{i}
\chi_r (\|(\ve W)^i \|_{i \alpha, 12m/i\textrm{-Bes}}^{12m/i} )
\, \delta_b (\tilde{y}^{\ve}_T)
\right]
\nn\\
&=
\mathbb{E}\left[
\exp \left(
-\frac{\la k, \,\cdot\,\ra}{\ve} - \frac{\|k\|_{\cH^H}^2}{2\ve^2}
\right)
\prod_{i}
\chi_r (\|(\ve W)^i \|_{i \alpha, 12m/i\textrm{-Bes}}^{12m/i} )
\, \frac{1}{\ve^e}\delta_0 \left(\frac{\tilde{y}^{\ve}_T-b}{\ve} \right)
\right].
\end{align}
Here, we set $\tilde{y}^{\ve}_T := y^{\ve}_T (\,\cdot\, +k/\ve)$.
Note that if 
$\|\ve W^1 \|_{\alpha, 12m\textrm{-Bes}}^{12m} \le r$, 
then
\[
|\la k, w\ra/\ve|\le  \|\la k, \,\cdot\,\ra\|_{\cW^*} \|w\|_{\cW}/\ve
\le  c\|\la k, \,\cdot\,\ra\|_{\cW^*} \|W^1 \|_{\alpha, 12m\textrm{-Bes}} /\ve
\le cr^{1/(12m)} /\ve^2
\]
for some constant $c>0$ which depends only on $(\alpha, 12m)$ and $k$.
Applying Sugita's theorem to the right hand side of \eqref{est.0809-1},
we can easily see that 
\begin{align}  
\lefteqn{
\tilde\nu_{a,b}^{H, \ve} (\mathcal{T}_k B_r (\mathbf{0}))
}
\nn\\
&\ge 
\exp \left(
-\frac{cr^{1/(12m)}}{\ve^2} - \frac{\|k\|_{\cH^H}^2}{2\ve^2}
\right)
\frac{1}{\ve^e}
\mathbb{E}\left[
\prod_{i}
\chi_r (\|(\ve W)^i \|_{i \alpha, 12m/i\textrm{-Bes}}^{12m/i} )
\, \delta_0 \left(\frac{\tilde{y}^{\ve}_T-b}{\ve} \right)
\right].
\nn
\end{align}
Therefore, in order to obtain \eqref{low_0808-2}, it suffices to show that
\begin{equation}\label{est.0809-3}
\lim_{\ve\searrow 0}
\mathbb{E}\left[
\prod_{i}
\chi_r (\|(\ve W)^i \|_{i \alpha, 12m/i\textrm{-Bes}}^{12m/i} )
\, \delta_0 \left(\frac{\tilde{y}^{\ve}_T-b}{\ve} \right)
\right]
\,\, \in (0,\infty)
\end{equation}
for every fixed $r>0$.

First, we can easily see that as $\ve\searrow 0$
\begin{equation}\label{est.0809-4}
\prod_{i}
\chi_r (\|(\ve W)^i \|_{i \alpha, 12m/i\textrm{-Bes}}^{12m/i} )
=\prod_{i}
\chi_r (\ve^{12m}\|W^i \|_{i \alpha, 12m/i\textrm{-Bes}}^{12m/i} )
=
1 + O(\ve)
\quad \mbox{in ${\bf D}_{\infty}$}.
\end{equation}
Next, it is known that 
$(\tilde{y}^{\ve}_T-b)/\ve =\{y^{\ve}_T (\,\cdot\, +k/\ve) - \hat\Psi_a (k)_T\}/\ve$ is uniformly non-degenerate 
in the sense of Malliavin.  
(See \cite[Proposition 5.4]{InaNag}. 
Since we work in the elliptic setting, a direct proof is 
not difficult in this case, however.)
It is also known that there exists 
$\zeta_T (\,\cdot\, ; k)\in \mathscr{C}^\prime_1 (\R^e)$
such that
\begin{equation}\label{est.0815-1}
y^{\ve}_T (\,\cdot\, +k/\ve) - \hat\Psi_a (k)_T
= \ve \zeta_T (\,\cdot\, ; k) + O (\ve^2) 
\quad \mbox{in ${\bf D}_{\infty} (\R^e)$}
\end{equation}
as $\ve\searrow 0$.
(This is a consequence of the Taylor expansion of Lyons-It\^o map. 
$\zeta_T (\,\cdot\, ; k)$ is the term of order 1 in the expansion.
This kind of expansion was already used in \cite{InaNag}, too.)
Moreover, $w \mapsto \zeta_T (w; k)$ is a Gaussian random variable whose covariance matrix
coincides with the deterministic Malliavin covariance matrix of 
$h \mapsto \hat\Psi_a (h)_T$ at $k$, which is non-degenerate due to the 
ellipticity assumption. (The mean may not vanish.)
Therefore the density function of 
the law of $\zeta_T (\,\cdot\, ; k)$ is strictly positive 
everywhere, in particular, $\mathbb{E} [\delta_0 (\zeta_T (\,\cdot\, ; k)) ] >0$.

Finally, we apply (a very special case of) the asymptotic theory for Watanabe distributions (see \cite[Section V-9]{iwbk}).
By \eqref{est.0815-1} and the uniform non-degeneracy, we have
\begin{equation}\label{est.0926-1}
\delta_0 \left(\frac{\tilde{y}^{\ve}_T-b}{\ve} \right) 
= 
\delta_0 (\zeta_T (\,\cdot\, ; k)) +O(\ve)
\quad \mbox{in ${\bf D}_{-\infty}$}
\end{equation}
as $\ve\searrow 0$. 
Combining this with \eqref{est.0809-4},  we see that the left hand side of 
\eqref{est.0809-3} equals $\mathbb{E} [\delta_0 (\zeta_T (\,\cdot\, ; k)) ] 
\in (0,\infty)$.
Thus, we have verified \eqref{est.0809-3} and therefore
finished the proof of \eqref{low_0808-1}.

\subsection{Exponential tightness}

In this subsection we show that, under the condition of 
Theorem \ref{thm.LDP8/6} (3), $\{\tilde\nu_{a,b}^{H, \ve}\}_{\ve \in (0,1]}$
is exponentially tight on 
$G\Omega_{\alpha, 12m\textrm{-}{\rm Bes}} (\R^d)$, that is, 
for every $R \in [1, \infty)$ there exists a precompact subset 
$K_R$ of $G\Omega_{\alpha, 12m\textrm{-}{\rm Bes}} (\R^d)$ 
satisfying that 
\begin{equation} \label{low_0819-1}
\varlimsup_{\ve\searrow 0}  \ve^2 \log \tilde\nu_{a,b}^{H, \ve} (K_R^c)\le -R.
\end{equation}
The key to prove this fact is that 
 the capacities of the complement of large 
 ``homogeneous balls" admit Gaussian decay.

\begin{lemma} \label{lem.0819-1}
For every $q \in (1, \infty)$ and $k \in \N$, 
there exist positive constants $c$ and $c^\prime$ satisfying that
\[
{\rm Cap}_{q,k} \left(\left\{ w\in \cW
\,\middle| \,
\|W^i \|_{i \alpha, 12m/i\textrm{-}{\rm Bes}}^{1/i} \ge  R
\right\} \right)
\le c^\prime e^{- cR^2}, \qquad  1\le i \le \lfloor 1/H\rfloor, 1\le R <\infty.
\]
Here, $c$ and $c^\prime$ may depend on $q,k, \alpha, m$,
but not on $R$.
\end{lemma}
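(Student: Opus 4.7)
The goal is to obtain Gaussian decay in $R$, which is strictly stronger than the polynomial Chebyshev bound used in the proof of Theorem \ref{thm.0321}. The plan is to apply that same Chebyshev-type inequality not to the functional $F_i := \|W^i\|_{i\alpha, 12m/i\textrm{-Bes}}^{12m/i}$ itself, but to a large power $F_i^N$, and then to optimise $N$ as a function of $R$. The key structural input is $F_i \in \mathscr{C}^\prime_{12m}$: each component of $W^i_{s,t}$ is an $L^2$-limit of polynomials of degree $i$ in $w$, hence lies in $\mathscr{C}^\prime_i$; raising to the even integer power $12m/i$ and then integrating over $\triangle_T$ both preserve membership in $\mathscr{C}^\prime_{12m}$. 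Consequently $F_i^N \in \mathscr{C}^\prime_{12mN}$ for every $N\in\N$.

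Combining the Chebyshev-type inequality for capacities (\cite[p.~90, Lemma~1.2.5]{ma}) with \eqref{eq.0312-2} applied in $\mathscr{C}^\prime_{12mN}$ yields
\[
{\rm Cap}_{q,k}\bigl(\{F_i \ge R^{12m}\}\bigr) = {\rm Cap}_{q,k}\bigl(\{F_i^N \ge R^{12mN}\}\bigr) \le M_{q,k} R^{-12mN}\|F_i^N\|_{\mathbf{D}_{q,k}} \le A_N R^{-12mN}\|F_i^N\|_{L^2},
\]
where $A_N$ is the $\mathbf{D}_{q,k}$-vs-$L^2$ comparison constant on $\mathscr{C}^\prime_{12mN}$. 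Sharp (Nelson) hypercontractivity on each homogeneous chaos, together with the Meyer equivalence $\|\cdot\|_{\mathbf{D}_{q,k}} \asymp \|(I-L)^{k/2}\cdot\|_{L^q}$ and the identity $(I-L)F = (1+n)F$ on $\mathscr{C}_n$, gives $A_N \le C_{q,k,m}(1+12mN)^{(k+1)/2}(q-1)^{6mN}$, so $A_N$ grows only exponentially in $N$. The same hypercontractivity bounds $\|F_i^N\|_{L^2} = \|F_i\|_{L^{2N}}^N \le (2N-1)^{6mN}\|F_i\|_{L^2}^N$, and $\|F_i\|_{L^2}$ is finite by \eqref{ineq.0315-2}.

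Finally I will choose $N = \lfloor\gamma R^2\rfloor$ with $\gamma > 0$ small enough (depending only on $q$, $k$, $m$ and $\|F_i\|_{L^2}$) so that the resulting logarithmic upper bound
\[
\tfrac{k+1}{2}\log(1+12mN) + N\bigl[\log\|F_i\|_{L^2} + 6m\log(q-1) + 6m\log(2N-1) - 12m\log R + O(1)\bigr]
\]
is at most $-cR^2$ for every $R \ge 1$; the polynomial prefactor $(1+R^2)^{(k+1)/2}$ is then absorbed by slightly shrinking $c$. The main obstacle—really the only delicate step—is verifying the exponential-in-$N$ control of $A_N$ via the Meyer equivalence and sharp hypercontractivity; without this, the gain from $R^{-12mN}$ would be neutralised by the growth of $\|F_i^N\|_{L^2}$ and one would only recover the polynomial decay already available from the proof of Theorem \ref{thm.0321}.
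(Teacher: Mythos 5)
Your argument is correct, but it takes a genuinely different route from the paper. The paper's proof starts from the Fernique-type Gaussian tail estimate for $\mu^H(\|W^i\|^{1/i}_{i\alpha,12m/i\text{-Bes}} \ge R)$ (via \cite[Theorem 15.33]{fvbook}), then transfers it to capacities by constructing a smooth cutoff $\chi(G-R^{12m}+1)$ with $G = \|W^i\|^{12m/i}$ that dominates the indicator of the set and whose Sobolev norm is controlled level by level (using Meyer's equivalence and the chain rule to reduce each $\|D^l[\chi(G-R^{12m}+1)]\|_{L^q}$ to tail estimates of $\mu^H$). Your proof replaces the Fernique input entirely with a Chernoff-type argument: you apply the capacity Chebyshev inequality to $F_i^N$ rather than to a cutoff, track the chaos degree $F_i^N \in \mathscr{C}^\prime_{12mN}$, and control the $\mathbf{D}_{q,k}$-vs-$L^2$ comparison constant $A_N$ explicitly via Nelson's sharp hypercontractivity ($\|G\|_{L^q} \le (q-1)^{j/2}\|G\|_{L^2}$ on $\mathscr{C}_j$ for $q\ge 2$) and the identity $(I-L)G = (1+j)G$ on $\mathscr{C}_j$ under Meyer's equivalence. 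Choosing $N \asymp \gamma R^2$ then makes the $R^{2}\log R$ terms from $R^{-12mN}$ and from $\|F_i^N\|_{L^2} = \|F_i\|_{L^{2N}}^N \lesssim (2N-1)^{6mN}\|F_i\|_{L^2}^N$ cancel exactly, leaving a coefficient $\gamma[6m\log(2\gamma) + B]$ that can be made negative by taking $\gamma$ small. Your only unstated input is that \eqref{eq.0312-2} alone gives no growth rate in the chaos degree, so you must supply the bound $A_N \lesssim (1+12mN)^{(k+1)/2}(q-1)^{6mN}$ yourself — you do this correctly, and it is the crux of the argument, since without the explicit exponential-in-$N$ rate the optimization over $N$ would collapse. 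The trade-off is clear: the paper borrows the hard estimate from Fernique's theorem for Gaussian rough paths, while you borrow it from hypercontractivity on the chaos decomposition; both are standard, and your version has the advantage of being more self-contained within the chaos framework already used throughout Section \ref{sec.QSlift}. (One small point worth stating explicitly: for $q < 2$ the factor $(q-1)^{6mN}$ should be replaced by $1$, since $\|\cdot\|_{L^q} \le \|\cdot\|_{L^2}$ trivially; and for $R$ in the bounded range where $\lfloor\gamma R^2\rfloor = 0$ you must take $N=1$ and absorb into $c^\prime$, which you gesture at but don't spell out.)
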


\begin{proof} 
From a Fernique type theorem for fractional Brownian rough path
with respect to 
 the H\"older rough path topology (see \cite[Theorem 15.33]{fvbook}),  
we can easily see that 
there exist constants $c_1, c_1^\prime >0$ independent of $R$ such that
\begin{equation} 
\mu^H \left( \left\{ w\in \cW
\,\middle| \,
\|W^i \|_{i \alpha, 12m/i\textrm{-}{\rm Bes}}^{1/i} \ge R
\right\}
\right)
\le 
c_1^\prime e^{- c_1 R^2}, \quad  1\le i \le \lfloor 1/H\rfloor, 1\le R <\infty.
\nn
\end{equation}

Take a non-negative,  non-decreasing, smooth function
$\chi$ such that 
$\chi \equiv 0$ on $(-\infty, 0]$ and $\chi \equiv 1$ on $[1,\infty)$.
All derivatives of $\chi$ are bounded.
Set $G(w) = \|W^i \|_{i \alpha, 12m/i\textrm{-}{\rm Bes}}^{12m/i}$.
Then, $G \in {\bf D}_{\infty}$ and $\chi (G -R^{12m} +1)$
is $\infty$-quasi-continuous. 
Moreover, $\chi (G -R^{12m} +1) \ge 1$ holds 
on 
the subset $\{\|W^i \|_{i \alpha, 12m/i\textrm{-}{\rm Bes}}^{1/i} \ge R\}$. 
By \cite[P. 106, Theorem 4.4]{ma}, we have
\[
{\rm Cap}_{q,k} 
\bigl( \{\|W^i \|_{i \alpha, 12m/i\textrm{-}{\rm Bes}}^{1/i} \ge R  \} \bigr) \le \| \chi (G -R^{12m} +1)  \|_{\mathbf{D}_{q,k}}.
\]
Therefore, it suffices to estimate the Sobolev norm on the right hand side.

First we calculate the $L^q$-norm.
Since there exists a constant $r_0 \ge 2$ such that 
$R \ge r_0$ implies $(R^{12m} -1)^{1/12m} \ge R/2$, we have 
\begin{align*}  
\| \chi (G -R^{12m} +1)  \|_{L^q} 
&\le 
\mu^H  \bigl(  \{ 
\|W^i \|_{i \alpha, 12m/i\textrm{-}{\rm Bes}}^{1/i} 
\ge (R^{12m} -1)^{1/12m} \} \bigr)^{1/q}
\nn\\
&\le 
\mu^H \bigl(  \{ 
\|W^i \|_{i \alpha, 12m/i\textrm{-}{\rm Bes}}^{1/i} 
\ge R/2 \} \bigr)^{1/q}
\nn\\
&\le 
 (c_1^\prime)^{1/q} e^{ - (c_1 /4q)R^2}
\end{align*}
for all $R\ge r_0$.
By adjusting  positive constants, we can easily show
that, for some constants $c_2, c_2^\prime >0$, 
$\| \chi (G -R^{12m} +1)  \|_{L^q} 
\le c_2^\prime e^{-c_2 R^2}$ holds for all $R \ge 1$.

Since
$D [\chi (G -R^{12m} +1) ]= \chi^{\prime} (G -R^{12m} +1) DG$, we have
\begin{align}
\lefteqn{
\| D [\chi (G -R^{12m} +1) ]\|_{L^q}  
}
\nn\\
&\le 
\| \chi^{\prime} \|_{\infty}    \bigl\| I_{  \{ G -R^{12m} +1 \ge 0\} }  \|DG\|_{{\cal H}^H}  \bigr\|_{L^q} 
\nn\\
&\le
\| \chi^{\prime} \|_{\infty}  \|DG\|_{L^{2q}}
 \mu^H \bigl(  \{ 
 \|W^i \|_{i \alpha, 12m/i\textrm{-}{\rm Bes}}^{1/i} 
   \ge (R^{12m} -1)^{1/12m} \} \bigr)^{1/2q}
\nn\\
&\le
\| \chi^{\prime} \|_{\infty}  \|DG  \|_{L^{2q}}   \cdot
 (c_1^\prime)^{1/2q} e^{ - (c_1 /8q)R^2}
\nn
\end{align}
for all $R\ge r_0$.
Hence, there are constants $c_3, c_3^\prime >0$ such that
$\| D[ \chi (G -R^{12m} +1) ] \|_{L^q} 
\le c_3^\prime e^{-c_3 R^2}$ holds for all $R \ge 1$.
From this and Meyer's equivalence of the Sobolev norms,  
there exists $c_4, c_4^\prime >0$ such that 
\[
\| \chi (G -R^{4m} +1)  \|_{\mathbf{D}_{q,1}}  \le 
c_4^\prime e^{ - c_4R^2}
\]
for all $R\ge 1$.

Repeating essentially the same argument, 
we can estimate $D^l [\chi (G-R^{4m} +1)]$ for any 
$l~(1 \le l \le k)$ to obtain the Gaussian decay of  
$\| \chi (G -R^{4m} +1)  \|_{\mathbf{D}_{q,k}}$.
This completes the proof of the lemma.
\end{proof}

\begin{lemma} \label{lem.0820-1}
There exists a constant $\ell \in\N$ independent of $q$ and $\ve$
such that
\[
\|\delta_b (y^\ve (T,a)) \|_{\mathbf{D}_{q, -2e}}  
=
O (\ve^{-\ell}) \qquad \mbox{as $\ve\searrow 0$.} 
\]
for every $q \in (1,\infty)$.
\end{lemma}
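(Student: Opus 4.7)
By duality it suffices to produce a single $\ell \in \N$, independent of $q$ and $\ve$, such that
\[
|\mathbb{E}[\delta_b(y^\ve(T,a)) G]| \le C_q \ve^{-\ell} \|G\|_{\mathbf{D}_{q',2e}},
\qquad G \in \mathbf{D}_\infty,\ \ve \in (0,1],
\]
with $1/q + 1/q' = 1$. The key device is the iterated integration-by-parts formula \eqref{ipb8.eq}: writing $\delta_b = \partial_1 \partial_2 \cdots \partial_e H_b$ in the distributional sense, where $H_b(y) := \prod_{i=1}^e \mathbf{1}_{(-\infty,0]}(y_i - b_i)$ is bounded by $1$, and applying \eqref{ipb8.eq} with $F = y^\ve(T,a)$ and $\mathbf{i} = (1,\ldots,e)$, one obtains
\[
\mathbb{E}[\delta_b(y^\ve(T,a)) G] = \mathbb{E}\bigl[H_b(y^\ve(T,a))\,\Phi_{(1,\ldots,e)}(\,\cdot\,;G)\bigr],
\]
which is bounded in absolute value by $\|\Phi_{(1,\ldots,e)}(\,\cdot\,;G)\|_{L^1(\mu^H)}$.

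Unwinding the recursion in Remark \ref{notation_ibp}, $\Phi_{(1,\ldots,e)}(\,\cdot\,;G)$ is a universal polynomial expression in $G, DG, \ldots, D^e G$, in Malliavin derivatives (of order at most some $N_0 = N_0(e)$) of the components of $F^\ve := y^\ve(T,a)$, $\sigma_{F^\ve}^{ij}$, and $LF^\ve$, and in the entries $\gamma_{F^\ve}^{ij}$ of $\sigma_{F^\ve}^{-1}$ together with their Malliavin derivatives. The scaling in \eqref{rde5} yields $\|D^l F^\ve\|_{\mathbf{D}_{p,k}} = O(\ve)$ for every $l \ge 1$, $p \in (1,\infty)$, $k \in \N_0$, so that $\|\sigma_{F^\ve}^{ij}\|_{\mathbf{D}_{p,k}} = O(\ve^2)$ and $\|LF^\ve\|_{\mathbf{D}_{p,k}} = O(\ve)$.

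The decisive ingredient is the uniform non-degeneracy
\[
\bigl\|\bigl(\det (\ve^{-2}\sigma_{F^\ve})\bigr)^{-1}\bigr\|_{L^p(\mu^H)} \le C_p,
\qquad \ve \in (0,1],\ p \in (1,\infty),
\]
which under the ellipticity assumption at $a$ follows from the Taylor expansion $F^\ve - a = \ve \zeta_T(\,\cdot\,;0) + O(\ve^2)$ in $\mathbf{D}_\infty(\R^e)$ (compare \eqref{est.0815-1}), whose leading Gaussian term has deterministic Malliavin covariance proportional to $\mathbf{V}(a)\mathbf{V}(a)^\top$, invertible by hypothesis; see \cite{InaNag} and Remark \ref{rem.positive}. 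Combined with the scaling of $D^l \sigma_{F^\ve}$, the identity $D\gamma_F = -\gamma_F (D\sigma_F)\gamma_F$ and its iterated analogues force $\|D^l \gamma_{F^\ve}^{ij}\|_{L^p} = O(\ve^{-N})$ for some $N = N(e)$ and every $p$.

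Collecting all factors in $\Phi_{(1,\ldots,e)}(\,\cdot\,;G)$ via H\"older's inequality, together with $\|G\|_{\mathbf{D}_{q',e}} \le \|G\|_{\mathbf{D}_{q',2e}}$, gives
\[
\|\Phi_{(1,\ldots,e)}(\,\cdot\,;G)\|_{L^1(\mu^H)} \le C_q \ve^{-\ell}\|G\|_{\mathbf{D}_{q',2e}}
\]
for some universal $\ell = \ell(e) \in \N$, which proves the lemma. The main obstacle is the uniform non-degeneracy of $\ve^{-2}\sigma_{F^\ve}$; once it is in hand, the rest is a careful accounting of the $\ve$-powers inside a universal polynomial expression, and the independence of $\ell$ from $q$ is transparent because the structure of $\Phi_{(1,\ldots,e)}$ does not depend on $q$.
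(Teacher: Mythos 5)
Your proposal follows the same strategy as the paper: write $\delta_b$ as a mixed distributional derivative of a tame function, apply the iterated integration-by-parts formula \eqref{ipb8.eq} with $F=y^\ve_T$, and estimate $\Phi_{\mathbf{i}}(\,\cdot\,;G)$ in an $L^r$-norm, with uniform non-degeneracy of $\ve^{-2}\sigma_{F^\ve}$ (under the ellipticity hypothesis at $a$) supplying polynomially bounded moments of $\gamma_{F^\ve}$ and its Malliavin derivatives. The only real difference is the choice of anti-derivative: you take $H_b(y)=\prod_{i=1}^e\mathbf{1}_{(-\infty,0]}(y_i-b_i)$ with $e$ derivatives, whereas the paper takes $A(z)=\prod_{j=1}^e\{(z^j-b^j)\vee 0\}$ with $2e$ derivatives. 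Two small remarks. First, with your $H_b$ one has $\partial_1\cdots\partial_e H_b=(-1)^e\delta_b$, so either flip the sign or use $\prod_i\mathbf{1}_{[0,\infty)}(y_i-b_i)$. Second, bounding $|\mathbb{E}[(H_b\circ F)\,\Phi]|$ by $\|\Phi\|_{L^1}$ tacitly identifies the Watanabe pullback $H_b\circ F$ with the pointwise composition $w\mapsto H_b(F(w))$; because $H_b$ is discontinuous, this needs a short justification (it does hold, since $F$ has a density and the discontinuity set of $H_b$ is Lebesgue-null, but one must still check that the distributional pullback agrees with the pointwise one). The paper's choice of $A$, which is continuous and of polynomial growth, sidesteps this subtlety at the modest cost of taking $2e$ rather than $e$ IBP's — and since only $\ell\in\N$ with $\|\delta_b(y^\ve_T)\|_{\mathbf{D}_{q,-2e}}=O(\ve^{-\ell})$ is needed, the extra powers of $\ve$ are irrelevant. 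With these two points addressed, your argument is correct.
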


\begin{proof} 
In this proof, $\ell_j\in \N$ are certain constants 
independent of $(q, \ve)$
and $c_j >0$ are certain constants independent of $\ve$ ($j=1,2,\ldots$).
For simplicity, we denote by $\sigma_\ve$ 
 the Malliavin covariance matrix of $y^\ve (T,a):=y^\ve_T$.
We also write $\gamma_\ve := (\sigma_\ve)^{-1}$.

It is well-known that every $\|y^\ve_T\|_{\mathbf{D}_{q, k}}$ 
is bounded in $\ve$ for every $(q, k) \in  (1,\infty)\times\N$.
As was explained in Subsection \ref{subsec.low},
$\{y^\ve_T/\ve\}_{\ve \in (0,1]}$ is uniformly non-degenerate 
in the sense of Malliavin calculus, which implies that
$
\| (\det \sigma_\ve )^{-1}\|_{L^q}= O (\ve^{-2e})
$
 as $\ve\searrow 0$ for every $q \in (1,\infty)$.
Hence, for some $\ell_1$, we have 
$
\| \gamma_\ve \|_{L^q}= O (\ve^{-\ell_1})
$
 as $\ve\searrow 0$ for every $q \in (1,\infty)$.

Set $A (z) := \prod_{j=1}^e \{(z^j -b^j) \vee 0\}$
for $z= (z^1, \ldots, z^e)$.
Then, we have 
$\partial_1^2 \cdots \partial_e^2 A =\delta_b$
in the distributional sense.
Now we  apply the multiple version of the
 by parts formula \eqref{ipb8.eq} in Remark \ref{notation_ibp} 
 with $F=y^\ve_T$, $T=A$ and $\mathbf{i} =(1,1,2,2,\ldots, e,e)$.
(see Remark \ref{notation_ibp} for the notation) to obtain
\begin{align}  
{\mathbb E} \bigl[
\delta_b ( y^\ve_T)  \, G 
\bigr]
=
{\mathbb E} \bigl[
A(  y^\ve_T) \, \Phi_{(1,1,\ldots, e,e)} (\, \cdot\, ;G)
\bigr],
\qquad
G \in {\mathbf D}_{\infty}.
\nn
\end{align}
It is easy to see that $\|A(  y^\ve_T)\|_{L^r}$ is bounded in $\ve$ for every $r\in (1,\infty)$.
For a given $q$, denote its conjugate exponent by $p$, i.e. $1/q +1/p =1$.
Then, there exists $\ell_2$ and $c_1$ such that
\[
\| \Phi_{(1,1,\ldots, e,e)} (\, \cdot\, ;G)\|_{L^{(p+1)/2}}
\le 
c_1 \|G\|_{\mathbf{D}_{p, 2e}} \ve^{-\ell_2},
\qquad
G \in {\mathbf D}_{\infty}, \ve \in (0,1].
\]
Combining these all, we have 
 \[
\left| {\mathbb E} \bigl[
\delta_b ( y^\ve_T)  \, G 
\bigr]
 \right|
\le 
c_2 \|G\|_{\mathbf{D}_{p, 2e}} \ve^{-\ell_2},
\qquad
G \in {\mathbf D}_{\infty}, \ve \in (0,1].
\]
Since ${\mathbf D}_{\infty}$ is dense in $\mathbf{D}_{p, 2e}$,
this estimate proves the lemma.
\end{proof}

Now we show the exponential tightness \eqref{low_0819-1}.
For $R \ge 1$, set 
\[
K_R := 
\left\{ X \in G\Omega_{\alpha, 12m\textrm{-}{\rm Bes}} (\R^d)
\middle|
\|X^i \|_{i \alpha, 12m/i\textrm{-}{\rm Bes}}^{1/i} <  R
\,\, \mbox{for all $i=1, \ldots,  \lfloor 1/H\rfloor$}
\right\}.
\]
Clearly, this is a bounded subset. 
By \eqref{sugisugi}, Lemmas \ref{lem.0819-1} and \ref{lem.0820-1}, we have
\begin{align*}  
\tilde\nu_{a,b}^{H, \ve} (K_R^c)
&=
\tilde\mu_{a,b}^{H, \ve} (\{w\in\cW \mid 
\ve W \in K_R^c\})
\nn\\
&\le 
\|\delta_b (y^\ve (T,a)) \|_{\mathbf{D}_{2, -2e}} 
{\rm Cap}_{2, 2e} (\{w\in\cW \mid 
\ve W \in K_R^c\})
\nn\\
&\le 
c^\prime \ve^{-\ell}  e^{-cR^2/\ve^2}, \qquad\qquad \ve\in (0,1], R\ge 1.
\end{align*}
Here, $c, c^\prime, \ell$ are positive constants 
independent of $R$ and $\ve$.
Thus, we have
\begin{align}  \label{est.0820-2}
\varlimsup_{\ve\searrow 0}  \ve^2 \log \tilde\nu_{a,b}^\ve (K_R^c)\le -cR^2.
\end{align}
Since $cR^2$ can be arbitrarily large, we have shown 
\eqref{low_0819-1} except the relative compactness of $K_R$.

For the relative compactness, we take $\alpha^\prime \in (\alpha, H)$.
When $\alpha^\prime$ is sufficiently close to $\alpha$,
$(\alpha^\prime, 12m)$ still satisfies the assumption of 
Theorem \ref{thm.LDP8/6} (3) and the embedding 
$G\Omega_{\alpha^\prime, 12m\textrm{-}{\rm Bes}} (\R^d)\hookrightarrow
G\Omega_{\alpha, 12m\textrm{-}{\rm Bes}} (\R^d)$ is compact (i.e. 
every bounded subset of the left set is a relatively compact 
 subset of the right set).
Therefore, using \eqref{est.0820-2} for the 
$(\alpha^\prime, 12m)$-Besov topology finishes the proof of 
the exponential tightness \eqref{low_0819-1}.

\subsection{Large deviation upper bound}

In this subsection we will show that 
\begin{equation} \label{up_0826-1}
\varlimsup_{\ve\searrow 0}  \ve^2 \log \tilde\nu_{a,b}^{H,\ve} (F)
 \le -\inf_{X \in F} \tilde{I} (X)
\end{equation}
holds for every closed subset 
$F$ of $G\Omega_{\alpha, 12m\textrm{-}{\rm Bes}} (\R^d)$.

In view of the exponential tightness \eqref{low_0819-1}, 
the upper estimate 
\eqref{up_0826-1} immediately follows from the following inequality:
\begin{equation} \label{up_0826-2}
\lim_{r \searrow 0}
\varlimsup_{\ve\searrow 0}  \ve^2 \log \tilde\nu_{a,b}^{H,\ve} ( B_r (X))
 \le - \tilde{I}(X)
 \end{equation}
for every $X\in G\Omega_{\alpha, 12m\textrm{-}{\rm Bes}} (\R^d)$.
We will show \eqref{up_0826-2} in Lemmas \ref{lem.0826-1}
and \ref{lem.0826-2} below.

Now let us prove \eqref{up_0826-2}.

\begin{lemma} \label{lem.0826-1}
If $\hat\Phi_a (\cP (X, \beta_0 \lambda))_T \neq b$,
then \eqref{up_0826-2} holds.
\end{lemma}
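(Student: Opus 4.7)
The plan is to establish \eqref{up_0826-2} by showing something stronger: under the hypothesis, the right-hand side equals $-\infty$, and the left-hand side is in fact identically $-\infty$ because $\tilde\nu_{a,b}^{H,\ve}(B_r(X))$ vanishes outright for all sufficiently small $r$ and $\ve$.

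First, I would observe that the hypothesis forces $\tilde{I}(X) = +\infty$. Indeed, if $\tilde{I}(X) < \infty$, then $X = S_{\lfloor 1/H\rfloor}(h)$ for some $h \in \cH^H$ with $\hat\Psi_a(h)_T = b$, and the identity $\hat\Psi_a(h) = \hat\Phi_a(\cP(\mathcal{S}_{\alpha\textrm{-}{\rm Hld}}(h), \beta_0\lambda))$ (noted in Section~\ref{sec.LDP_state}) would give $\hat\Phi_a(\cP(X, \beta_0\lambda))_T = b$, contradicting the hypothesis. So the right-hand side is $-\infty$, and it suffices to prove the stronger vanishing statement.

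Next, set $b' := \hat\Phi_a(\cP(X, \beta_0\lambda))_T$ and $\rho := |b' - b|/3 > 0$. Using the joint continuity of the Young pairing $\cP$ and the Lyons--It\^o map $\hat\Phi_a$ (transported to the Besov setting through the embedding \eqref{ineq.0311-4}), together with the convergence $\beta_\ve\lambda \to \beta_0\lambda$ in $C^1$-topology that follows from the regularity of $\beta$, I would choose $r_0 > 0$ and $\ve_0 > 0$ so that
\[
|\hat\Phi_a(\cP(X', \beta_\ve \lambda))_T - b| \ge 2\rho
\qquad \mbox{for all $X' \in B_{r_0}(X)$ and $\ve \in [0, \ve_0]$.}
\]

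Finally, I would combine this deterministic separation with $\infty$-quasi-continuity. By Theorem~\ref{thm.0321}, $w \mapsto \ve W$ is an $\infty$-quasi-continuous map from $\cW$ into $G\Omega_{\alpha, 12m\textrm{-Bes}}(\R^d)$, and composing with the continuous map $X' \mapsto \hat\Phi_a(\cP(X', \beta_\ve\lambda))_T$ produces an $\infty$-quasi-continuous version of $y^\ve_T$. By uniqueness of $\infty$-quasi-continuous modifications quasi-everywhere, this version is a legitimate representative to feed into Lemma~\ref{lem.jigoku2}. On the quasi-open set $\{w : \ve W(w) \in B_r(X)\}$ (with $r \le r_0$, $\ve \le \ve_0$), this representative of $y^\ve_T$ stays at distance at least $2\rho$ from $b$, hence Lemma~\ref{lem.jigoku2} applied to the positive Watanabe distribution $\delta_b(y^\ve_T)$ yields $\tilde\mu_{a,b}^{H,\ve}(\{w : \ve W(w) \in B_r(X)\}) = 0$, and by definition $\tilde\nu_{a,b}^{H,\ve}(B_r(X)) = 0$. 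The main obstacle is precisely the quasi-continuity bookkeeping in this last step: one must ensure that the $\mu^H$-a.s. identity $y^\ve_T = \hat\Phi_a(\cP(\ve W, \beta_\ve \lambda))_T$ lifts to a genuinely quasi-sure identity before Lemma~\ref{lem.jigoku2} can be invoked, and the uniqueness of $\infty$-quasi-continuous modifications furnishes exactly this bridge.
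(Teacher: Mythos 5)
Your proof is correct and follows essentially the same route as the paper's own proof: both note that the hypothesis forces $\tilde{I}(X)=\infty$, both use the $\infty$-quasi-continuity of $w\mapsto\hat\Phi_a(\cP(\ve W,\beta_\ve\lambda))_T$ together with Lemma~\ref{lem.jigoku2} to conclude that $\tilde\mu^{H,\ve}_{a,b}$ is carried by $\{w : \hat\Phi_a(\cP(\ve W,\beta_\ve\lambda))_T = b\}$, and both use continuity of $\hat\Phi_a$ and $\cP$ to separate that set from $\{\ve W\in B_r(X)\}$ for small $r$ and $\ve$, giving $\tilde\nu^{H,\ve}_{a,b}(B_r(X))=0$. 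You simply spell out the quasi-continuity bookkeeping and the verification that $\tilde{I}(X)=\infty$ more explicitly than the paper does.
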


\begin{proof} 
First, note that
since $w \mapsto \hat\Phi_a (\cP (\ve W, \beta_\ve \lambda))_T$ is 
$\infty$-quasi continuous,
$\tilde\mu_{a,b}^{H,\ve}$ is concentrated on the subset
$\{w\in\cW \mid \hat\Phi_a (\cP (\ve W, \beta_\ve \lambda))_T =b\}$
(see Lemma \ref{lem.0510_1} and its proof).
By the continuity, there exists $\delta, r>0$ and $\ve_0 \in (0,1)$ such that
if $\hat{X} \in B_r (X)$ and $\ve \in (0,\ve_0)$, then
$|\hat\Phi_a (\cP (\hat{X}, \beta_\ve \lambda))_T -b|\ge \delta$.
Therefore,  we have
\[
\tilde\nu_{a,b}^{H,\ve} ( B_r (X))
 =  \tilde\mu_{a,b}^{H, \ve} \left( \ve W \in B_r (X)\right)
=0
\]
for sufficiently small $\ve >0$.
This implies  \eqref{up_0826-2} since $\tilde{I} (X) =\infty$.
\end{proof}

Before proving the other case $\hat\Phi_a (\cP (X, \beta_0 \lambda)) =b$,
let us recall a Schilder type large deviation principle
for fractional Brownian rough path with respect to the
$\alpha'$-H\"older topology for every $\alpha' <H$. 
(See \cite[Theorem 15.55]{fvbook}, in which 
the expression of the rate function seems to contain a minor typo.)
By the embedding in \eqref{ineq.0311-4},
the same large deviation principle still holds on $G\Omega_{\alpha, 12m\textrm{-}{\rm Bes}} (\R^d)$.
Due to the general fact in the large deviation theory
(see \cite[Theorem 4.1.18]{dzbook} for instance), we have 
\begin{equation} \label{eq.0826-3}
\lim_{r \searrow 0}
\varlimsup_{\ve\searrow 0}  \ve^2 \log 
\mu^H  (\{\ve W \in  B_r (X)\})
 \le -I_{{\rm Sch}} (X)
\end{equation}
for every $X\in G\Omega_{\alpha, 12m\textrm{-}{\rm Bes}} (\R^d)$,
where we set
\[
I_{{\rm Sch}} (X) := \begin{cases}
\tfrac12 \|h\|^2_{\cH^H} 
& (\mbox{if $X= S_{\lfloor 1/H\rfloor} (h)$ for some 
$h\in \cH^H$}),\\
\infty & (\mbox{otherwise}).
\end{cases}
\]

\begin{lemma} \label{lem.0826-2}
It holds that 
\[
\lim_{r \searrow 0}
\varlimsup_{\ve\searrow 0}  \ve^2 \log \tilde\nu_{a,b}^{H, \ve} ( B_r (X))
 \le -I_{{\rm Sch} }(X)
\]
for every $X\in G\Omega_{\alpha, 12m\textrm{-}{\rm Bes}} (\R^d)$.
\end{lemma}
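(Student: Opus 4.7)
The plan is to remove the singular factor $\delta_b(y^\ve_T)$ from $\tilde\nu_{a,b}^{H,\ve}(B_r(X))=\mathbb{E}[\delta_b(y^\ve_T)\mathbf{1}_{\{\ve W\in B_r(X)\}}]$ via the iterated integration-by-parts formula \eqref{ipb8.eq}, trading it for a polynomial factor in $\ve^{-1}$ times a Wiener functional supported in a slight enlargement of $\{\ve W\in B_r(X)\}$; the measure-level Schilder bound \eqref{eq.0826-3} then delivers the conclusion after sending some auxiliary H\"older exponents to $1$.

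First I would fix $\psi\colon\R\to[0,1]$ smooth with $\psi\equiv 1$ on $(-\infty,r]$ and $\psi\equiv 0$ on $[2r,\infty)$, and set
\[
\chi(\hat{X}):=\prod_{i=1}^{\lfloor 1/H\rfloor}\psi\bigl(\|\hat{X}^i-X^i\|_{i\alpha,12m/i\textrm{-Bes}}^{12m/i}\bigr),
\]
so that $\mathbf{1}_{B_r(X)}\le\chi\le\mathbf{1}_{B_{2r}(X)}$. Since every $12m/i$ is an even integer, $\|\,\cdot\,\|_{i\alpha,12m/i\textrm{-Bes}}^{12m/i}$ is a smooth polynomial functional in $\hat{X}^i$; together with the chain rule this shows $\chi(\ve W)\in{\bf D}_\infty$ and, more importantly, $\sup_{\ve\in(0,1]}\|\chi(\ve W)\|_{{\bf D}_{q,k}}<\infty$ for every $(q,k)$, because each Malliavin differentiation of $\chi(\ve W)$ supplies only non-negative powers of $\ve$.

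Next I would apply \eqref{ipb8.eq} with $F=y^\ve_T$, $\mathbf{i}=(1,1,2,2,\ldots,e,e)$, and $A(z):=\prod_{j=1}^e(z^j-b^j)_+$, in parallel with the proof of Lemma \ref{lem.0820-1}, to obtain
\[
\tilde\nu_{a,b}^{H,\ve}(B_r(X)) \le \mathbb{E}\bigl[\delta_b(y^\ve_T)\chi(\ve W)\bigr] = \mathbb{E}\bigl[A(y^\ve_T)\,\Phi_{\mathbf{i}}(\,\cdot\,;\chi(\ve W))\bigr].
\]
Since every $D^l\chi(\ve W)$ $(0\le l\le 2e)$ vanishes outside $\{\ve W\in B_{2r}(X)\}$, so does $\Phi_{\mathbf{i}}(\,\cdot\,;\chi(\ve W))$. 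Two applications of H\"older's inequality with conjugate pairs $(p,p')$ and $(\beta,\beta')$ then yield
\[
\tilde\nu_{a,b}^{H,\ve}(B_r(X)) \le \|A(y^\ve_T)\|_{L^p}\,\|\Phi_{\mathbf{i}}(\,\cdot\,;\chi(\ve W))\|_{L^{p'\beta}}\,\mu^H\bigl(\ve W\in B_{2r}(X)\bigr)^{1/(p'\beta')}.
\]
The first factor is uniformly bounded in $\ve$, while the second is $\le C\ve^{-\ell}$ for some $C,\ell$ depending on $(p,p',\beta,r,X)$ but not on $\ve$, by the estimate used at the end of the proof of Lemma \ref{lem.0820-1} together with the uniform Sobolev bound from the previous paragraph. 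Taking $\ve^2\log$, using $\ve^2\log\ve^{-\ell}\to 0$, and sending $\ve\searrow 0$ followed by $r\searrow 0$, the measure LDP \eqref{eq.0826-3} produces $-I_{\textrm{Sch}}(X)/(p'\beta')$ on the right; finally $p',\beta'\searrow 1$ gives the claimed $\le -I_{\textrm{Sch}}(X)$, and when $I_{\textrm{Sch}}(X)=\infty$ the conclusion is immediate.

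The main obstacle will be ensuring that the iterated integration by parts produces only a polynomial factor $\ve^{-\ell}$, not an exponential one. This hinges on the uniform Malliavin--Sobolev boundedness of $\chi(\ve W)$ in $\ve$, for which the even-integer exponents $12m/i$ built into the Besov norms defining $B_r(X)$ are essential (without them the composition $\psi(\|\,\cdot\,\|^{12m/i})$ would fail to be $C^\infty$ in $\hat{X}^i$, spoiling the iterated integration by parts). Once this uniform bound and the uniform non-degeneracy of $y^\ve_T/\ve$ established in Subsection \ref{subsec.low} are in place, the remaining manipulations are routine.
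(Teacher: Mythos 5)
Your proposal is correct and follows essentially the same route as the paper: a smooth cutoff dominating $\mathbf 1_{B_r(X)}$ with support in $B_{2r}(X)$, integration by parts in the sense of Malliavin to trade $\delta_b(y^\ve_T)$ for a polynomial factor $\ve^{-\ell}$ (in the paper this is packaged as Lemma \ref{lem.0820-1} and a duality pairing of $\mathbf D_{q,-2e}$ with $\mathbf D_{p,2e}$, whereas you unfold the iterated IBP inline and then apply H\"older twice), the support of the cutoff giving the factor $\mu^H(\ve W\in B_{2r}(X))^{1/\tilde p}$, and finally the measure-level Schilder bound \eqref{eq.0826-3} followed by sending the auxiliary exponent to $1$. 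The two proofs differ only in where the H\"older/duality split is performed, not in substance.
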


\begin{proof} 
Take a non-negative,  smooth, even function
$\varphi$ such that 
$\varphi \equiv 1$ on $[0, 1]$ and $\varphi \equiv 0$ on $[2,\infty)$.
Clearly, $\varphi$ is compactly supported.
If we set 
\[
F^{\ve}_r (w) = \prod_{1\le i\le \lfloor 1/H\rfloor}
\varphi \left(\|(\ve W)^i-X^i\|_{i \alpha, 12m/i\textrm{-Bes}}^{12m/i} /r\right),
\]
then $F^{\ve}_r \in {\bf D}_{\infty}$, $F^{\ve}_r \ge 0$ and 
$F^{\ve}_r\equiv 1$ on $B_r(X)$.
Moreover, $F^{\ve}_r$ is $\infty$-quasi-continuous in $w$.
For every $k\in\N$ and $1 <p<\tilde{p}<\infty$, 
there exist 
positive constants $c_1, c_2$ 
which may depend on $k, p, \tilde{p}, r$ but not on $\ve$ such that
\begin{align*}  
\|F^{\ve}_r\|_{\mathbf{D}_{p, k}} 
&\le
c_1 \sum_{j=0}^{k}\|D^j F^{\ve}_r\|_{L^p}
\le
c_2 \mu^H \left( \{ \ve W\in B_{2r} (X) \} \right)^{1/ \tilde{p}}.
\end{align*}
Using this, Sugita's theorem and Lemma \ref{lem.0820-1}, we have
\begin{align}  
 \tilde\nu_{a,b}^{H, \ve} ( B_r (X))
&=
 \tilde\mu_{a,b}^{H,\ve} \left( \{\ve W \in B_r (X)\}\right)
 \nn\\
 &\le
  \int_{\cW}  F^{\ve}_r (w) \, \tilde\mu_{a,b}^{H, \ve} (dw)
   \nn\\
 &= \mathbb{E} [\delta_b ( y^\ve_T)  \, F^{\ve}_r ]
  \nn\\
 &\le
 \|\delta_b (y^\ve (T,a)) \|_{\mathbf{D}_{q, -2e}}   
 \|F^{\ve}_r\|_{\mathbf{D}_{p, 2e}}
   \nn\\
 &\le
  c_3 \ve^{-\ell}  \mu^H \left( \{\ve W\in B_{2r} (X)  \}\right)^{1/ \tilde{p}},
  \nn
\end{align}
where $p^{-1} + q^{-1}=1$ and $\ell, c_3$ are certain positive constants
independent of $\ve$.
This inequality and \eqref{eq.0826-3} immediately imply that 
\[
\lim_{r \searrow 0}
\varlimsup_{\ve\searrow 0}  \ve^2 \log  \tilde\nu_{a,b}^{H, \ve} ( B_r (X))
 \le -\frac{1}{\tilde{p}}I_{{\rm Sch}} (X). 
\]
Setting $\tilde{p} =2p -1$ and then letting $p\searrow 1$,
we obtain the desired estimate.
\end{proof}

\section{The Young case: $H>1/2$}\label{sec.Young}

In this section we assume $H \in (1/2, 1)$. 
Denote by $\mu^H$ the law of
$d$-dimensional FBM with Hurst parameter $H$,
which is a Gaussian measure on
 $\mathcal{W}=\tilde\cC^{\alpha\textrm{-Hld}}_0 (\R^d)$
  with $\alpha  \in (1/2, H)$.
The coordinate process $w=(w_t)_{t\in [0,T]}$ on $\cW$ 
is a canonical realization of FBM under $\mu^H$.
The Cameron-Martin space of $\mu^H$ is denoted by $\cH^H$.
As is well-known, $(\cW, \cH^H, \mu^H)$ is an abstract Wiener space.
Throughout this section $V_j~(j\in \llbracket 0, d\rrbracket)$ 
are assumed to be $C_{{\rm b}}^\infty$-vector fields on $\R^e$.

We consider the Young differential equation (YDE)
driven by FBM $w=(w_t)_{t\in [0,T]}$:
\begin{equation}\label{rde8}
dy_t = \sum_{j=1}^d V_j (y_t) dw^{j}_t  + V_0 (y_t) dt,
\qquad 
y_0 =a\in \R^e.
\end{equation}
Under our condition on the coefficients, 
this YDE is deterministic and has a unique solution for every fixed $w$.
We write $y = \Phi_a (w)$ and view $\Phi$ as a map from 
$\mathcal{W}$ to $\tilde\cC^{\alpha\textrm{-Hld}}_a (\R^e)$.
It is known that $\Phi$ is Fr\'echet-$C^\infty$ (see \cite{ina_young} for example).
We denote by $\mathbb{Q}_{a}^H$ the law of this process $y$, that is,
$\mathbb{Q}_{a}^H := (\Phi_a)_* \mu^H$.
Since $\Phi_a$ takes values in the ``little $\alpha$-H\"odler space" 
$\tilde{\cC}^\alpha_a (\R^e)$,
this Borel probability measure sits on $\tilde{\cC}^{\alpha\textrm{-Hld}}_a (\R^e)$.

Under the  $C_{{\rm b}}^\infty$-condition on the coefficients, 
$y_t \in {\bf D}_{\infty} (\R^e)$ for every $t \in [0,T]$.
(See \cite{ina_young}. See also \cite{bh} for the driftless case. 
These proofs are different.)
Suppose that $y_T =y (T,a)$ is non-degenerate 
in the sense of Malliavin and  
that $p(T, a, b):= {\mathbb E} [\delta_b (y (T,a)) ]> 0$.
In this case, a unique 
Borel probability measure on $\cW$ corresponding to 
$p(T, a, b)^{-1}\delta_b (y (t,a))$ is denoted by $\mu^H_{a,b}$.
We then set  
$\mathbb{Q}^H_{a,b} :=(\Phi_a)_* \circ \mu^H_{a,b}$.
(When $p(T, a, b)=0$, we set $\mathbb{Q}^H_{a,b} :=\delta_{\ell(a,b)}$,
where $\ell(a,b)_t := a + (b-a) (t/T)$. )

As in the rough path case (i.e. $1/4 <H\le 1/2$), $\mathbb{Q}^H_{a,b}$ can be 
regarded as (the law of) a bridge process.
\begin{theorem} \label{thm.0921}
Let $1/2 <\alpha <H <1$.
Assume that 
$y_T =y (T,a)$ is non-degenerate in the sense of Malliavin.
Then, the family 
\[
\{ \mathbb{Q}^H_{a,b} (A) \mid A\in 
\mathcal{B} (\tilde{\cC}^{\alpha\textrm{-}{\rm Hld}}_{a} (\R^e)),\,  b\in \R^e\}
\]
is (a version of) the regular conditional probability 
of $\mathbb{Q}^H_{a}$ on 
$\tilde{\cC}^{\alpha\textrm{-}{\rm Hld}}_{a} (\R^e)$
given the evaluation map at the time $T$.
\end{theorem}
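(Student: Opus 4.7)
The plan is to mirror the proof of Theorem \ref{thm.0510_2} (the rough path case), observing that the Young setting is actually simpler because the Itô--Young solution map $\Phi_a\colon \mathcal{W}\to \tilde{\cC}^{\alpha\textrm{-Hld}}_a(\R^e)$ is itself continuous (in fact Fréchet-$C^\infty$). In particular, $y_T(w)=\Phi_a(w)_T$ is a continuous, hence automatically $\infty$-quasi-continuous, function of $w\in\mathcal{W}$. So none of the Besov quasi-sure lift machinery from Section \ref{sec.QSlift} is required; one works directly on the abstract Wiener space $(\mathcal{W},\mathcal{H}^H,\mu^H)$ using Malliavin calculus and Sugita's theorem as reviewed in Section \ref{sec.RPMal}.

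First I would establish the Young analogue of Lemma \ref{lem.0510_1}: for $b\in U:=\{b\in\R^e\mid p(T,a,b)>0\}$, the measure $\mathbb{Q}^H_{a,b}$ is concentrated on $\tilde{\cC}^{\alpha\textrm{-Hld}}_{a,b}(\R^e)$ and, for every rational partition and every $g\in C^\infty_{\mathrm{b}}((\R^e)^N)$,
\[
\mathbb{E}\!\left[g(y_{t_1},\ldots,y_{t_{N-1}},y_T)\,\delta_b(y_T)\right]
= p(T,a,b)\int_{\cY} g(z_{t_1},\ldots,z_{t_{N-1}},b)\,\mathbb{Q}^H_{a,b}(dz),
\]
where $\cY:=\tilde{\cC}^{\alpha\textrm{-Hld}}_a(\R^e)$. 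The first assertion follows from Lemma \ref{lem.jigoku2} since $y_T$ is continuous in $w$, so every modification may be taken equal to $y_T$ itself; the displayed identity is then immediate from Sugita's theorem applied to $\delta_b(y_T)\in\tilde{\mathbf{D}}_{-\infty}$ together with the definitions of $\mu^H_{a,b}$ and $\mathbb{Q}^H_{a,b}$.

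Second, I would repeat the two monotone-class arguments of the proof of Theorem \ref{thm.0510_2} essentially verbatim. For Borel measurability of $b\mapsto \mathbb{Q}^H_{a,b}(A)$, one verifies the class $\cK$ of bounded Borel $F$ for which $b\mapsto \int F\,d\mathbb{Q}^H_{a,b}$ is Borel is a monotone class containing $\mathcal{F}C^\infty_{\mathrm{b}}$; the cylinder case reduces via the displayed identity above to continuity on $U$ (coming from continuity of $b\mapsto\delta_b\in\mathscr{S}_{-2l}(\R^e)$ and Watanabe's pullback being continuous into some $\mathbf{D}_{p,-2l}$) and continuity on $U^c$ (coming from the definition $\mathbb{Q}^H_{a,b}=\delta_{\ell(a,b)}$ with $b\mapsto\ell(a,b)$ continuous). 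For the defining identity
\[
\mathbb{Q}^H_a(A\cap\Pi_T^{-1}(E))
=\int_{\R^e}\mathbb{Q}^H_{a,b}(A)\,\mathbf{1}_E(b)\,p(T,a,b)\,db,
\]
I would fix $\chi\in\mathscr{S}(\R^e)$, apply the Bochner identity $\chi=\int\chi(z)\delta_z\,dz$ in $\mathscr{S}_{-2l}(\R^e)$ (as recalled in \eqref{eq.1008-1}), and obtain the relation for $F\in\mathcal{F}C^\infty_{\mathrm{b}}$; two successive monotone-class extensions, first in $F$ and then in $\mathbf{1}_E$, finish the argument (the set $\{p(T,a,\cdot)=0\}$ is negligible under $p(T,a,b)\,db$, so the convention on $U^c$ is harmless).

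There is no genuine obstacle beyond checking that the ingredients transplant cleanly. The only mild point to verify is the non-degeneracy-based infrastructure: that under our hypothesis $p(T,a,\cdot)$ is continuous and bounded, that $b\mapsto\delta_b(y_T)\in\mathbf{D}_{p,-2l}$ is continuous for large $l$, and that the Bochner integral representation of Schwartz functions is available on the Wiener space $(\mathcal{W},\mathcal{H}^H,\mu^H)$; all of these facts are recorded in Section \ref{sec.RPMal} and hold on any abstract Wiener space, so the verbatim transcription of the Theorem \ref{thm.0510_2} argument goes through.
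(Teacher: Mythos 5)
Your proposal is correct and matches the paper's intent exactly: the paper's own proof of Theorem \ref{thm.0921} consists of the single remark that the argument is the same as in Theorem \ref{thm.0510_2} but simpler since no rough path lift is involved, and you have filled in precisely that transcription, correctly observing that in the Young case $\Phi_a$ (and hence $y_T$) is genuinely continuous on $\mathcal{W}$, so the quasi-sure lift machinery of Section \ref{sec.QSlift} can be bypassed while Lemma \ref{lem.jigoku2}, Sugita's theorem, and the two monotone-class arguments transplant verbatim.
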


\begin{proof} 
The proof of this theorem is much easier than that of Theorem \ref{thm.0510_2} since the rough path lift map is not involved in the Young case.
So, we omit it. 
\end{proof}


%
\begin{remark}\label{rem.positive2}
Assume that $y_T =y (T,a)$ is non-degenerate in the sense of Malliavin.
As in the case $H \in (1/4,1/2]$,  
$p(T, a, b)>0$ if and only if there exists $h\in \cH^H$
satisfying the following two conditions:
\begin{enumerate} 
\item[(1)]~$\Phi_a (h)_T =b$
\item[(2)]~$D\Phi_a (h)_T$ is a surjective linear map from 
$\cH^H$ to $\R^e$.
Here, $D\Phi_a (h)_T$ stands for the Fr\'echet derivative of 
$k \in \cH^H \mapsto D\Phi_a (k)_T\in \R^e$ at $h\in \cH^H$.
\end{enumerate}
This equivalence can be checked as follows.
The map $w \mapsto \Phi_a (w)$ is known to be 
Fr\'echet-$C^3$ whose derivatives of order $k~(0 \le k\le 3)$
are of at most polynomial growth, which
implies that this map is twice $\cK$-regularly differentiable in 
the sense of Aida-Kusuoka-Stroock \cite{aks}.
Then, the above equivalence is a special case of \cite[Theorem 2.8]{aks}.
\end{remark}

In the following remark, we recall some results on the
 non-degeneracy of the solution $(y_t)_{t\in [0,T]}$ of RDE \eqref{rde1}.  
In almost all problems concerning stochastic equations 
driven by FBM, 
the case $H >1/2$ is much easier than the case $H\le  1/2$.
On the Malliavin non-degeneracy, however,
the case $H >1/2$ might be less known than the case $1/4 <H\le  1/2$.

\begin{remark}\label{rem.nondeg2}
Let $\{V_j\}_{0\le j\le d}$ be the coefficient vector fields of YDE \eqref{rde8}.
Here, they are viewed as  first-order differential operators on $\R^e$.
We provide two known examples of sufficient condition
for  $y_t =y (t,a)$ to be non-degenerate in the sense of Malliavin 
for all $t \in (0, T]$.
\begin{enumerate} 
\item
$\{V_j\}_{0\le j\le d}$ satisfies the ellipticity condition at 
the starting point $a\in \R^e$ (see \cite{ina_young}).
\item
$V_0 \equiv 0$ and $\{V_j\}_{1\le j\le d}$ satisfies H\"ormander's
bracket-generating condition at the starting point $a\in \R^e$  (see \cite{bh}).
\end{enumerate}
As far as the author knows, there is no published paper which proves 
the Malliavin non-degeneracy 
under the same bracket-generating condition on $\{V_j\}_{0\le j\le d}$
as in Remark \ref{rem.nondeg}.
However, this issue seems to have been solved by 
a recent unpublished work \cite{paul}.
\end{remark}

Now we turn to the large deviation principle.
For a small parameter $\ve\in (0,1]$, 
we consider the following scaled YDE:
\begin{equation}\label{rde9}
dy^{\ve}_t = \ve \sum_{j=1}^d V_j (y^{\ve}_t) dw^{j}_t  + 
\beta_\ve V_0 (y^{\ve}_t) dt,
\qquad 
y^{\ve}_0 =a\in \R^e.
\end{equation}
We sometime write $y^{\ve}_t =y^{\ve} (t, a)$.
Here, $\beta \colon [0,1] \to \R$ is a ${\rm Lip}^\gamma$-function for some $\gamma >1$.
(See Remark \ref{rem.relax}.)
Typical examples of $\beta$ are
$\beta_\ve =1$ and $\beta_\ve =\ve^{1/H}$.


We denote by 
$\hat\Phi_a \colon \tilde\cC^{\alpha\textrm{-Hld}}_0 (\R^{d+1})
\to \tilde\cC^{\alpha\textrm{-Hld}}_a (\R^e)$ 
the Young-It\^o map (i.e. the solution map of YDE)
associated with the coefficients $\{V_j\}_{ 0\le j \le d}$,
where $1/2 <\alpha <H$.
Then, $y^\ve = \hat\Phi_a ((\ve w, \beta_\ve \lambda))$.
The associated skeleton map is defined by 
$ \hat\Psi_a (h) := \hat\Phi_a ((h, \beta_0 \lambda))$ for $h\in \cH^H$.

We assume that the coeffiecient vector fields
$V_j~(j\in \llbracket 0, d\rrbracket)$ are of $C_{{\rm b}}^\infty$
satisfying the following condition: 
There exists a connected open subset $U$ of $\R^e$ 
such that $a, b \in U$ and $\{V_j\}_{ 0\le j \le d}$
satisfies the ellipticity condition on $U$.
Then, as we have seen in Remark \ref{rem.positive2},
$\delta_b (y^\ve (t,a))$ is a positive Watanabe distribution
with $p^\ve (T, a, b) := {\mathbb E} [\delta_b (y^\ve (t,a)) ]>0$.
We denote by $\mu_{a,b}^{H, \ve}$ the
Borel probability measure on $\cW$ corresponding to 
$p^\ve (T, a, b)^{-1}\delta_b (y^\ve (t,a))$.

Now we provide our main large deviation results in the Young case.
The speed of these large deviations is $1/\ve^2$.
In this setting, the rate functions are defined by 
\[
\tilde{I} (w) 
= 
\begin{cases}
\tfrac12 \|w\|^2_{\cH^H} 
& (\mbox{if $w\in \cH^H$ with $\hat\Psi_a (w)_T =b$}),\\
\infty & (\mbox{otherwise}).
\end{cases}
\]
and 
\[
I (w)= \tilde{I}(w) -\min\{ \tfrac12 \|h\|^2_{\cH^H} \mid 
\mbox{$h\in \cH^H$ with $\hat\Psi_a (h)_T=b$} \}.
\]

\begin{theorem} \label{thm.LDP9/25}
Let $H \in (1/2, 1)$.
We assume that $C_{{\rm b}}^\infty$-vector fields
$V_j~(j\in \llbracket 0, d\rrbracket)$ on $\R^e$ satisfy the following condition: 
There exists a connected open subset $U$ of $\R^e$ 
such that $a, b \in U$ and $\{V_j\}_{ 0\le j \le d}$
satisfies the ellipticity condition on $U$.
Then, the following two statements hold:
\\
{\rm (1)}~The following Varadhan-type asymptotics holds:
\[
\lim_{\ve\searrow 0}\ve^2 \log p^\ve (T, a, b)=-\min\{ \tfrac12 \|h\|^2_{\cH^H} \mid 
\mbox{$h\in \cH^H$ with $\hat\Psi_a (h)_T =b$} \}.
\]
{\rm (2)}~For every $\alpha\in (1/2, H)$,
$\{\mu_{a,b}^{H, \ve}\}_{\ve \in (0,1]}$ satisfies a 
large deviation principle 
on $\tilde\cC^{\alpha\textrm{-}{\rm Hld}}_0 (\R^d)$ as $\ve \searrow 0$
with good rate function $I$. 
Moreover, $\{\mu_{a,b}^{H,\ve}\}_{\ve \in (0,1]}$  is exponentially tight 
as $\ve \searrow 0$.
\\
{\rm (3)}~For every $(\alpha, m)\in (1/2, H)\times \N$ with 
$\alpha -(2m)^{-1} > 1/2$, 
$\{\mu_{a,b}^{H,\ve}\}_{\ve \in (0,1]}$ satisfies a large deviation principle 
on $\cC^{\alpha, 2m\textrm{-}{\rm Bes}}_0 (\R^d)$
as $\ve \searrow 0$ with good rate function $I$. 
Moreover, $\{\mu_{a,b}^{H,\ve}\}_{\ve \in (0,1]}$  is exponentially tight 
as $\ve \searrow 0$.
\end{theorem}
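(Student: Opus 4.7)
The plan is to transpose the proof of Theorem~\ref{thm.LDP8/6} given in Section~\ref{sec.proof.low} to the Young setting, where the argument simplifies substantially: since $H>1/2$, the Young-It\^o map $\hat\Phi_a$ is Fr\'echet-$C^\infty$ directly on the Banach space $\cW=\tilde\cC^{\alpha\textrm{-Hld}}_0(\R^d)$, the Cameron-Martin shift $w\mapsto w+h$ replaces the Young translation $\mathcal{T}_h$ of rough paths, and the rough-path machinery of Section~\ref{sec.QSlift} is not needed. As before, I will work with the unnormalized finite measures $\tilde\mu^{H,\ve}_{a,b}:=p^\ve(T,a,b)\mu^{H,\ve}_{a,b}$ on $\cW$, corresponding to $\delta_b(y^\ve(T,a))$ via Sugita's theorem, and establish for them a lower bound, an upper bound, and exponential tightness in the $\alpha$-H\"older (or $(\alpha,2m)$-Besov) topology. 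Assertion~(1) will then follow by taking the whole space as both an open and closed set, while (2)--(3) will follow by the customary normalization $I=\tilde I-\min\tilde I$.

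For the lower bound I would fix $k\in\cH^H$ with $\hat\Psi_a(k)_T=b$, approximate $k$ by elements of $\iota^*(\cW^*)$ preserving the endpoint constraint (as in \cite[Lemma~7.3]{in1}, applicable here by surjectivity of $D\hat\Psi_a(k)_T$), and reduce the claim to showing
\[
\lim_{r\searrow 0}\varliminf_{\ve\searrow 0}\ve^2\log \tilde\mu^{H,\ve}_{a,b}(\tau_k B_r) \;\ge\; -\tfrac12\|k\|^2_{\cH^H}
\]
for $k\in\iota^*(\cW^*)$, where $B_r$ is a small ball around $0$ in $\cW$ in the relevant topology. A Cameron-Martin shift by $k/\ve$ combined with Sugita's theorem converts the left-hand side into an expression analogous to \eqref{est.0809-1}, whose analysis rests on the stochastic Taylor expansion
\[
y^\ve_T(\,\cdot\,+k/\ve)-\hat\Psi_a(k)_T \;=\; \ve\,\zeta_T(\,\cdot\,;k) + O(\ve^\gamma) \quad\textrm{in }{\bf D}_\infty(\R^e),
\]
with $\gamma>1$ the Lipschitz exponent of $\beta$ (cf.\ Remark~\ref{rem.relax}), together with the uniform Malliavin non-degeneracy of the normalized residual. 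Ellipticity at $a$ makes the deterministic Malliavin covariance of $\hat\Psi_a(\,\cdot\,)_T$ at $k$ invertible, so $\zeta_T(\,\cdot\,;k)\in\mathscr{C}^{\prime}_1(\R^e)$ is a Gaussian random vector with strictly positive density at $0$, and the asymptotic theory of Watanabe distributions (\cite[Section~V-9]{iwbk}) then yields a strictly positive nondegenerate limit, giving the required lower bound.

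Exponential tightness and the upper bound will be handled by reproducing Lemmas~\ref{lem.0819-1}--\ref{lem.0826-2}. The capacity estimates require only a Fernique-type Gaussian tail for the $\alpha$-H\"older and $(\alpha,2m)$-Besov norms of FBM (classical for $H>\alpha>1/2$) combined with Meyer's equivalence, plus the bound $\|\delta_b(y^\ve_T)\|_{\mathbf{D}_{q,-2e}}=O(\ve^{-\ell})$ obtained by iterated integration by parts against the primitive $A(z)=\prod_j((z^j-b^j)\vee 0)$; Sugita's inequality~\eqref{sugisugi} then converts these into the exponential estimate \eqref{low_0819-1}. For the upper bound, either $\hat\Phi_a((w,\beta_0\lambda))_T\neq b$, in which case Lemma~\ref{lem.jigoku2} and continuity of $\hat\Phi_a$ force small neighbourhoods of $w$ to have zero $\tilde\mu^{H,\ve}_{a,b}$-mass for small $\ve$, or else one inserts a smooth cutoff $F^\ve_r\in{\bf D}_\infty$ and invokes $\tilde\mu^{H,\ve}_{a,b}(B_r(w))\le\|\delta_b(y^\ve_T)\|_{\mathbf{D}_{q,-2e}}\|F^\ve_r\|_{\mathbf{D}_{p,2e}}$ together with the classical Schilder LDP for FBM in the H\"older/Besov topology. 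The only genuine obstacle is bookkeeping: verifying that Fernique, Schilder, the $C^\infty$-Fr\'echet regularity of $\hat\Phi_a$, and the ${\bf D}_\infty$-expansion of $y^\ve_T$ all remain valid in the $(\alpha,2m)$-Besov topology under the condition $\alpha-(2m)^{-1}>1/2$ that makes Young integration work inside this space; since the Besov norm dominates a slightly weaker H\"older norm, this reduces to the already-established H\"older case and is routine.
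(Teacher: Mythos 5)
Your proposal is correct and follows exactly the route the paper takes: it derives Assertion (2) from (3) via the Besov--H\"older embedding, and proves (1) and (3) by transposing the proof of Theorem \ref{thm.LDP8/6} (Cameron--Martin shift plus Sugita's theorem for the lower bound, capacity estimates and Meyer's equivalence for tightness, cutoff plus Schilder LDP for the upper bound), noting the simplifications afforded by working directly on $\cW$ in the Young setting. The paper compresses all of this into a single deferral sentence; your write-out supplies the details that are omitted, and the only minor imprecision is the final remark that the Besov case ``reduces'' to the H\"older case --- more accurately, the Gaussian tail and Taylor-expansion estimates hold in the Besov norm directly (with the Besov norm sandwiched between two H\"older norms) and the H\"older LDP is then the one that follows by the embedding.
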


\begin{proof} 
Due to the continuous embedding 
$\cC^{\alpha, 2m\textrm{-}{\rm Bes}}_0 (\R^d) 
\hookrightarrow \tilde\cC^{\gamma\textrm{-}{\rm Hld}}_0 (\R^d)$ with
$\gamma := \alpha - (2m)^{-1}$, Assertion (2) immediately follows from
Assertion (3).

The proofs of Assertion (1) and (3) are essentially the same as 
(actually much easier than) those of Theorem \ref{thm.LDP8/6} (1) and (3). So we omit details. (See also Remark \ref{rem.relax})
\end{proof}

We denote by $\mathbb{Q}_{a,b}^{H, \ve}$ the law of $y^{\ve}$ 
conditioned that $y^{\ve}_T =b$, that is, 
the law of $y^\ve = \hat\Phi_a ((\ve w, \beta_\ve \lambda))$
under $\mu_{a,b}^{H, \ve}$.
Then, we have 
$\mathbb{Q}_{a,b}^{H, \ve} 
= [\hat\Phi_a]_* 
(\mu_{a,b}^{H, \ve} \otimes \delta_{\beta_\ve \lambda})$.

\begin{corollary} \label{cor.ldp_fdb2}
Let $H \in (1/2, 1)$.
We assume that $C_{{\rm b}}^\infty$-vector fields
$V_j~(j\in \llbracket 0, d\rrbracket)$ on $\R^e$ satisfy the following condition: 
There exists a connected open subset $U$ of $\R^e$ 
such that $a, b \in U$ and $\{V_j\}_{ 0\le j \le d}$
satisfies the ellipticity condition on $U$.

Then, for every $\alpha\in (1/2, H)$, 
$\{\mathbb{Q}_{a,b}^{H, \ve}\}_{\ve \in (0,1]}$  
satisfies a large deviation principle 
on $\tilde{\cC}^{\alpha\textrm{-}{\rm Hld}}_{a, b} (\R^e)$ as $\ve \searrow 0$
with good rate function $J$, which is defined by
\begin{align*}
J (\xi) &:= \inf\{  \tfrac12 \|h\|^2_{\cH^H}  \mid 
\mbox{$h\in \cH^H$ with $\xi =\hat\Psi_a (h)$}\}, 
\\
&\qquad -\min\{ \tfrac12 \|h\|^2_{\cH^H} \mid 
\mbox{$h\in \cH^H$ with $\hat\Psi_a (h)_T =b$} \},
\qquad \xi \in \tilde{\cC}^{\alpha\textrm{-}{\rm Hld}}_{a, b} (\R^e).
 \end{align*}
As usual, $\inf \emptyset =\infty$ by convention.
\end{corollary}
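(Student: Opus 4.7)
The plan is to mirror the proof of Corollary \ref{cor.ldp_fdb} in the now easier Young setting, where no rough path lift is needed. First, I would invoke Theorem \ref{thm.LDP9/25}(2) to obtain a large deviation principle with good rate function $I$ for $\{\mu_{a,b}^{H,\ve}\}_{\ve\in (0,1]}$ on $\tilde\cC^{\alpha\textrm{-Hld}}_0(\R^d)$, together with exponential tightness. Simultaneously, since $\beta$ is ${\rm Lip}^{\gamma}$ with $\gamma>1$, we have $\beta_\ve \to \beta_0$ as $\ve \searrow 0$, so the (trivially exponentially tight) family $\{\delta_{\beta_\ve \lambda}\}_{\ve\in (0,1]}$ satisfies an LDP on the one-dimensional space $\R\langle \lambda \rangle$ with good rate function $L$ defined by $L(s\lambda)=0$ if $s=\beta_0$ and $L(s\lambda)=\infty$ otherwise.

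Next, because the two families are independent, the product measures $\{\mu_{a,b}^{H,\ve}\otimes \delta_{\beta_\ve \lambda}\}_{\ve\in (0,1]}$ satisfy an LDP on $\tilde\cC^{\alpha\textrm{-Hld}}_0(\R^d)\times \R\langle \lambda\rangle$ (with the product topology) with good rate function $K(w,s\lambda):=I(w)+L(s\lambda)$, and the family is again exponentially tight. Note $K(w,s\lambda)<\infty$ forces $s=\beta_0$ and $w\in \cH^H$.

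Now I would apply the contraction principle (see \cite[Theorem 4.2.1]{dzbook}) to the continuous map
\[
\Xi \colon \tilde\cC^{\alpha\textrm{-Hld}}_0(\R^d)\times \R\langle \lambda\rangle \longrightarrow \tilde{\cC}^{\alpha\textrm{-Hld}}_{a,b}(\R^e),\qquad \Xi(w,s\lambda):=\hat\Phi_a((w,s\lambda)),
\]
whose continuity is precisely the continuity of the Young--It\^o map recalled just before Theorem \ref{thm.0921}. Since $\mathbb{Q}_{a,b}^{H,\ve}=\Xi_{*}(\mu_{a,b}^{H,\ve}\otimes \delta_{\beta_\ve \lambda})$, the contraction principle yields the desired LDP on $\tilde{\cC}^{\alpha\textrm{-Hld}}_{a,b}(\R^e)$ with good rate function
\[
J(\xi)=\inf\{K(w,s\lambda)\mid \Xi(w,s\lambda)=\xi\}.
\]
Because $L$ forces $s=\beta_0$ and $I$ forces $w\in \cH^H$ with $\hat\Psi_a(w)_T=b$, and since $\hat\Psi_a(h)=\hat\Phi_a((h,\beta_0\lambda))$ for $h\in \cH^H$, the infimum collapses exactly to the advertised expression for $J(\xi)$, with the normalizing constant $\min\{\tfrac12\|h\|_{\cH^H}^2\mid \hat\Psi_a(h)_T=b\}$ coming from the definition of $I$.

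I do not anticipate a real obstacle: unlike the rough path case, the solution map is continuous on an honest Banach space, and no quasi-sure refinement or Young translation machinery is needed to make sense of the pairing with $\beta_\ve \lambda$. The only small point to double-check is that the exponential tightness of $\{\mu_{a,b}^{H,\ve}\}$ in $\tilde\cC^{\alpha\textrm{-Hld}}_0(\R^d)$, together with the compactness of the one-point set $\{\beta_0 \lambda\}$-approximation, transfers to exponential tightness of $\{\mathbb{Q}_{a,b}^{H,\ve}\}$ via the continuity of $\Xi$, which is immediate.
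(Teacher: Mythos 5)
Your proof is correct and follows essentially the same route the paper takes: it reduces to Theorem \ref{thm.LDP9/25}(2), combines with the trivial LDP for $\{\delta_{\beta_\ve\lambda}\}$, and transfers via the contraction principle through the continuous Young--It\^o map, exactly as the paper does by referring back to the argument for Corollary \ref{cor.ldp_fdb}. The only stylistic difference is that in the Young case no Young pairing $\cP$ intervenes before $\hat\Phi_a$, which you correctly reflect by composing $\hat\Phi_a$ directly with the path pair.
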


\begin{proof} 
Since the Young-It\^o map $\hat\Phi_a$ is continuous, 
we can show this corollary from Theorem \ref{thm.LDP9/25} (2)
in the same way as in Corollary \ref{cor.ldp_fdb}.
So, we omit details.
\end{proof}

\noindent
{\bf Acknowledgement:}~
The author is supported by JSPS KAKENHI (Grant No. 25KF0152).


\bigskip
\begin{flushleft}
  \begin{tabular}{ll}
    Yuzuru \textsc{Inahama}
    \\
    Faculty of Mathematics,
    \\
    Kyushu University,
    \\
    744 Motooka, Nishi-ku, Fukuoka, 819-0395, JAPAN.
    \\
    Email: {\tt inahama@math.kyushu-u.ac.jp}
  \end{tabular}
\end{flushleft}

\end{document}